\documentclass[final, 10pt]{amsart}
\usepackage[foot]{amsaddr}
\usepackage{mathrsfs}
\usepackage{mathtools}
\usepackage{amsmath}
\usepackage{amsthm}
\usepackage{amssymb}
\usepackage{amsfonts}
\usepackage{graphicx}
\usepackage{enumitem}
\usepackage{tikz}
\usepackage{tkz-tab}
\usepackage{nicematrix}
\NiceMatrixOptions{%
    cell-space-limits=1pt,
    columns-width=10pt
}
\usepackage[final]{hyperref}
\usepackage{xcolor}
\usepackage{dsfont}
\usepackage{cases}

\definecolor{jade}{rgb}{0.0, 0.66, 0.42}
\definecolor{cerulean}{rgb}{0.0, 0.48, 0.65}
\definecolor{ruddy}{rgb}{1.0, 0.0, 0.16}
\definecolor{PSprune}{RGB}{99, 0, 60}
\definecolor{PSgrisfonce}{RGB}{49, 62, 72}
\definecolor{PSgris}{RGB}{124, 135, 143}
\definecolor{PSgrisclair}{RGB}{213, 218, 223}
\definecolor{PSrouge}{RGB}{198, 11, 70}
\definecolor{PSrose}{RGB}{237, 20, 91}
\definecolor{PSorange}{RGB}{238, 52, 35}
\definecolor{PSorangeclair}{RGB}{243, 115, 32}
\definecolor{PSviolet}{RGB}{124, 42, 144}
\definecolor{PSmauve}{RGB}{125, 106, 175}
\definecolor{PSmarron}{RGB}{198, 103, 29}
\definecolor{PSjaune}{RGB}{254, 188, 24}
\definecolor{PSbleufonce}{RGB}{0, 78, 125}
\definecolor{PSbleu}{RGB}{14, 135, 201}
\definecolor{PSturquoise}{RGB}{0, 148, 181}
\definecolor{PSlagon}{RGB}{70, 195, 210}
\definecolor{PSvertfonce}{RGB}{0, 128, 122}
\definecolor{PSvert}{RGB}{64, 183, 105}
\definecolor{PSfeuille}{RGB}{140, 198, 62}
\definecolor{PSvertjaune}{RGB}{213, 223, 61}

\theoremstyle{plain}
\newtheorem{theorem}{Theorem}
\newtheorem{proposition}[theorem]{Proposition}
\newtheorem{property}[theorem]{Property}
\newtheorem{corollary}[theorem]{Corollary}
\newtheorem{lemma}[theorem]{Lemma}
\theoremstyle{definition}
\newtheorem{definition}[theorem]{Definition}
\newtheorem{notation}[theorem]{Notation}
\theoremstyle{remark}
\newtheorem{remark}[theorem]{Remark}
\usepackage{BCDG}

\hypersetup{  
  pdftitle={Numerical analysis of a Lax--Wendroff type lattice Boltzmann scheme for the advection equation},  
  pdfsubject={Lattice Boltzmann method},  
  pdfauthor={Bellotti, Caetano, Dubois, Graille},
  breaklinks=true,
  colorlinks=true,
  urlcolor=PSbleu,
  linkcolor=PSbleu,
  citecolor=PSbleu,
}

\begin{document}

\title[%
    Numerical analysis of a lattice Boltzmann scheme%
]{%
    Numerical analysis of a Lax--Wendroff type lattice Boltzmann scheme for the advection equation
}

\author{T.~Bellotti$^1$}
\author{F.~Caetano$^2$}
\author{F.~Dubois$^{3, 2}$}
\author{B.~Graille$^2$}
\email{benjamin.graille@universite-paris-saclay.fr}

\address{$^1$ Universit\'e Paris-Saclay, CNRS, CentraleSup\'elec, Laboratoire EM2C \& F\'ed\'eration de Math\'ematiques de CentraleSup\'elec,
91190, Gif-sur-Yvette, France}
\address{$^2$ Universit\'e Paris-Saclay, CNRS, Laboratoire de math\'ematiques d'Orsay, 91405, Orsay, France}
\address{$^3$ Conservatoire National des Arts et M\'etiers, MACS laboratory, Paris, France}

\date{\today}

\begin{abstract}
A linear monodimensional lattice Boltzmann scheme with three velocities tackling the advection equation is investigated.
We prove that it is second-order accurate and provide necessary and sufficient stability conditions through an explicit formula involving the two relaxation parameters.  
Relying on existing results for finite difference schemes, we prove convergence for smooth initial data with initialization of the non-conserved moments at equilibrium.
Several numerical simulations illustrate these theoretical findings.
\end{abstract}
    
\maketitle

Keywords: stability, Cohn-Schur algorithm, convergence

AMS2000: 65M12

%%%%%%%%%%%%%%%%%%%%%%%%%%%%%%%%%%%%%%%%%%%%%%%%%%
% SECTION INTRODUCTION
%%%%%%%%%%%%%%%%%%%%%%%%%%%%%%%%%%%%%%%%%%%%%%%%%%
\section{Introduction}
%%%%%%%%%%%%%%%%%%%%%%%%%%%%%%%%%%%%%%%%%%%%%%%%%%

We investigate a three-velocity lattice Boltzmann scheme, see \cite{succi2001lattice, guo2013lattice} for a general introduction, designed to approximate the solution of the linear advection equation with second-order accuracy.
The scheme under investigation fully aligns with the Lax-Wendroff method \cite{lax1960systems} for a specific choice of free parameters, and retains its centered character in full generality.

Finite-velocity models with three velocities have a long history in the literature; the earliest reference we found is by Broadwell \cite{broadwell1964shock}, where a finite-velocity approximation of the Boltzmann equation (see \cite{Carleman1957ProblmesMD} for an earlier example with two velocities)---reduced to a set of three equations under symmetry assumptions---is introduced.
Almost three decades later, Qian, d'Humi{\`e}res, and Lallemand \cite{qian1992lattice} proposed several lattice Boltzmann schemes, including a three-velocity model, to approximate the Navier-Stokes system. We note that the target equation we address here---the linear advection equation---is indeed simpler than those in the aforementioned references, which focused on either a discrete-velocity Boltzmann equation or the Navier-Stokes system.

In the case of a scalar equation and a three-velocity model considered here, the numerical scheme features four free parameters: two linear equilibria and two relaxation parameters.
In this contribution, the first two are fixed to achieve second-order accuracy for the advection equation. 
The relaxation parameters remain free and solely influence stability, whereas accuracy is uniform in them.
When they both equal one, we obtain a relaxation scheme which is the Lax-Wendroff method.

An essential question for any numerical scheme addressing evolutionary partial differential equations is its \emph{stability}, and lattice Boltzmann methods are no exception.
Most of the present work focuses on this aspect.
Let us therefore review the main facets of and contributions to this topic.

A first approach relies on von Neumann stability analysis, and boils down to considering linear or linearized schemes, applying a spatial Fourier transform, and verifying whether the spectral radius of the resulting matrix is less than one for all frequencies.
This method was adopted in \cite{sterling1996stability, PhysRevE.61.6546}. Given that the matrix size corresponds to the number of discrete velocities, the scheme may include numerous free parameters, and since all frequencies must be considered, this stability criterion is typically verified numerically.
This approach has been widely used in subsequent works. Notably, Ginzburg and collaborators \cite{ginzburg2010optimal} derived partially explicit stability conditions---thus not solely numerically-checked---for schemes with two relaxation parameters whose sum equals two, using the Cohn-Schur algorithm, see \cite{miller_location_1971, strikwerda_finite_2004}.

A second major research stream on the stability of lattice Boltzmann algorithms focuses on weighted  $L^2$ stability and the so-called ``stability structure'' \cite{banda2006stability, junk2009weighted, rheinlander2010stability}.
Although this concept is primarily designed for linearized schemes, it can also yield valuable insights in certain non-linear settings, as demonstrated in \cite{junk2009convergence}.
This approach involves constructing a weighted vector norm to evaluate the discrete distribution functions, ensuring that both the transport and---more critically---the relaxation steps of the algorithm become $L^2$-contractive under this norm.
In this way, the elementary bound $\lVert A^n \rVert\leq \lVert A \rVert^n$, where $A$ represents the numerical scheme, can be effectively employed, as $\lVert A \rVert\leq 1$.

A third set of contributions focuses on $L^{\infty}$-stability and maximum principles, which are particularly effective for scalar target equations and generally more demanding than mere $L^2$ stability.
This stringency enables their direct applicability to non-linear problems.
This approach was pioneered by Dellacherie \cite{dellacherie2014construction} in a linear framework and later extended to non-linear problems through various notions of monotonicity, as seen in \cite{caetano, bellotti2023monotonicity, aregba2024convergence, aregba2026monotonicity}.

We note that the previously discussed stability notions sometimes lead to convergence proofs of the numerical solution to the target equation, as demonstrated in \cite{junk2009convergence, dellacherie2014construction, caetano, bellotti2023monotonicity, aregba2024convergence, aregba2026monotonicity}.
This will also be one of the goals of the present contribution.

Our work conducts a comprehensive $L^2$ stability analysis of the considered algorithm by supplementing the von Neumann analysis with a discussion of eigenvalue multiplicities, in addition to the mere analysis of spectral radii (i.e., their moduli).
We derive explicit necessary and sufficient stability conditions, obtained using the Cohn-Schur algorithm \cite{miller_location_1971, strikwerda_finite_2004}, in terms of the Courant number and the two relaxation parameters of the scheme.

The article is structured as follows.
In Section~\ref{sec:descriptionScheme}, we introduce the three-velocities lattice Boltzmann scheme, whose equilibria are fully determined in Section~\ref{sec:consistency} to achieve second-order consistency with the target equation.
Section~\ref{sec:stability} defines stability and recapitulates the necessary and sufficient stability conditions.
Proofs, which constitute the bulk of the paper, are provided in Section~\ref{sec:proofs}.
In Section~\ref{sec:convergence}, we use known results on finite difference methods to prove convergence of the conserved moment of the lattice Boltzmann scheme for smooth initial data and initialization of the non-conserved moments ``at equilibrium''.
Additionally, we study discrete wave propagation by the scheme for smooth data.
Finally, Section~\ref{sec:numericalExperiments} presents several numerical experiments that showcase the expected wave propagation patterns, convergence for smooth and non-smooth initial data, and empirical second-order convergence of an adaptation of the scheme to the (non-linear) inviscid Burgers' equation.

%%%%%%%%%%%%%%%%%%%%%%%%%%%%%%%%%%%%%%%%%%%%%%%%%%
% SECTION DESCRIPTION
%%%%%%%%%%%%%%%%%%%%%%%%%%%%%%%%%%%%%%%%%%%%%%%%%%
%%%%%%%%%%%%%%%%%%%%%%%%%%%%%%%%%%%%%%%%%%%%%%%%%%
% SECTION DESCRIPTION
%%%%%%%%%%%%%%%%%%%%%%%%%%%%%%%%%%%%%%%%%%%%%%%%%%
\section{Description of the scheme}\label{sec:descriptionScheme}

In this section, the investigated lattice Boltzmann scheme is presented. 
We tackle the approximation of the following Cauchy problem
\begin{align}
    &\partial_{t}\mz(t, x)+V \partial_{x} \mz(t, x) = 0
    &&
    \text{for} \quad t>0, \ x\in\mathbb{R},\label{eq:pde} \\
    &\mz(0, x) = \mz^{\circ}(x)
    &&
    \text{for} \quad x\in\mathbb{R},\label{eq:initialcondition} 
\end{align}
a linear scalar advection equation with constant velocity $V\in\mathbb{R}$.
Space is discretized using a uniform grid $(j \spacestep)_{j\in\mathbb{Z}}$ with $\spacestep>0$, and time relying on $(n \timestep)_{n\in\mathbb{N}}$ with $\timestep>0$.
We introduce the ratio $\lambda := \spacestep/\timestep>0$, which is assumed given and fixed whatever the value of $\spacestep$.

% In this paper, we consider a linear three-velocities monodimensional lattice Boltzmann scheme specified by the following:
% \begin{itemize}
%     \item three discrete velocities $0$, $\lambda$, and $-\lambda$ and the associated particle distribution functions $\fz$, $\fp$, and $\fm$;
%     \item three polynomials defining the three moments denoted by $u$, $v$, and $w$,
%     \begin{equation*}
%         \begin{aligned}
%         P_0 &= 1, & P_1 &= X, & P_2 &= \tfrac{1}{2}X^2,\\
%         \mz &= \fz+\fp+\fm, & \mun &= \lambda \fp - \lambda \fm, & \mdeux &= \tfrac{1}{2}\lambda^2 \fp + \tfrac{1}{2}\lambda^2 \fm;
%         \end{aligned}
%     \end{equation*}
%     \item three equilibrium value of each moment:
%     \begin{equation*}
%         \mzeq(\mz) = \mz, \quad
%         \muneq(\mz) = c \lambda \mz, \quad
%         \mdeuxeq(\mz) = \tfrac{1}{2}d \lambda^2\mz,
%     \end{equation*}
%     where $c$ and $d$ are real values.
%     For the moment, $c$ and $d$ are kept free: their value to ensure consistency with \eqref{eq:pde} is discussed in Section \ref{sec:consistency}.
%     \item Two relaxation parameters for the non-conserved moments $\sun$ and $\sdeux$; and the centered relaxation parameters $\tun=\sun-1$ and $\tdeux=\sdeux-1$.
% \end{itemize}
In this paper, we consider a linear three-velocities monodimensional lattice Boltzmann scheme specified by
three discrete velocities $0$, $\lambda$, and $-\lambda$,
the associated particle distribution functions $\fz$, $\fp$, and $\fm$,
and three polynomials defining the three moments denoted by $u$, $v$, and $w$,
\begin{equation*}
    \begin{aligned}
    P_0 &= 1, & P_1 &= X, & P_2 &= \tfrac{1}{2}X^2,\\
    \mz &= \fz+\fp+\fm, & \mun &= \lambda \fp - \lambda \fm, & \mdeux &= \tfrac{1}{2}\lambda^2 \fp + \tfrac{1}{2}\lambda^2 \fm.
    \end{aligned}
\end{equation*}
The three equilibrium values of the moments read
\begin{equation*}
    \mzeq(\mz) = \mz, \quad
    \muneq(\mz) = c \lambda \mz, \quad
    \mdeuxeq(\mz) = \tfrac{1}{2}d \lambda^2\mz,
\end{equation*}
where $c$ and $d$ are real values.
For the moment, $c$ and $d$ are kept free: their value to ensure consistency with \eqref{eq:pde} is discussed in Section \ref{sec:consistency}.
Finally, we introduce two relaxation parameters for the non-conserved moments $\sun$ and $\sdeux$; and the centered relaxation parameters $\tun=\sun-1$ and $\tdeux=\sdeux-1$.
We use vectorial notations by introducing the vector of the particle distribution functions  $\vectf = (\fz,\fp,\fm)^T$ and the vector of the moments $\vectm = (\mz, \mun, \mdeux)^T$ linked by the matrix relation $\vectm = \matM\vectf$ with
\begin{equation*}
\matM = \begin{pNiceArray}{ccc}
1&1&1\\ 0&\lambda&-\lambda\\ 0&\frac{\lambda^2}{2}&\frac{\lambda^2}{2}
\end{pNiceArray},
\qquad
\matM^{-1} = \begin{pNiceArray}{ccc}
1 & 0 & -\frac{2}{\lambda^2} \\
0 & \frac{1}{2\lambda} & \frac{1}{\lambda^2} \\
0 & -\frac{1}{2\lambda} & \frac{1}{\lambda^2}
\end{pNiceArray}.
\end{equation*}
Then, the algorithm proceeds as follows.
Given $(\vectm_{j}^0)_{j\in\mathbb{Z}}$ arbitrary (which however must be eventually linked to the initial datum $\mz^{\circ}$ to establish a connection with the Cauchy problem at hand), for $n\in\mathbb{N}$, compute
\begin{itemize}
    \item Relaxation. For $j\in\mathbb{Z}$, $n\in\mathbb{N}$,
    \begin{equation}\label{eq:relaxation}
        \vectm_{j}^{n, \star}
        =
        \mathbf{diag}(1, 1-\sun, 1-\sdeux) \, \vectm_{j}^{n}
        +
        \mathbf{diag}(0, \sun, \sdeux) \,
        \boldsymbol{m}^{\textrm{eq}}(\mz_j^n).
    \end{equation}
    \item Transport. For $j\in\mathbb{Z}$, $n\in\mathbb{N}$,
    \begin{equation}\label{eq:transport}
        f_{0, j}^{n+1} = f_{0, j}^{n, \star}, 
        \qquad 
        f_{\pm, j}^{n+1} = f_{\pm, j\mp\lambda \timestep/\spacestep}^{n, \star} = f_{\pm, j\mp1}^{n, \star}.
    \end{equation}
\end{itemize}

As the problem is linear, it is convenient to introduce a discrete Fourier transform in space, see \cite[Chapter 2]{strikwerda_finite_2004} for the details.
We denote any space-grid function having undergone a Fourier transformation by a hat, so that the scheme on the distribution functions reads 
\begin{equation*}
    \hat{\vectf}^{n+1}
    (\xi)
    =
    \matG(\xi)\hat{\vectf}^{n}, 
    \qquad \forall\xi\in[-\pi, \pi],
\end{equation*}
where $\matG$ is called ``amplification matrix'', and given by
\begin{equation*}
\matG(\xi) = \matT(\xi) \matM^{-1} \matR \matM,
\end{equation*}
with
\begin{equation*}
\matT(\xi) = \begin{pNiceArray}{ccc}
    1&0&0 \\ 0&e^{-\i\xi}&0 \\ 0&0&e^{\i\xi}
\end{pNiceArray},
\qquad
\matR = \begin{pNiceArray}{ccc}
    1&0&0 \\ 
    \sun c\lambda & 1-\sun&0 \\ 
    \frac{1}{2} \sdeux d \lambda^2 & 0 & 1-\sdeux
\end{pNiceArray}.
\end{equation*}
After algebra, we obtain
\begin{equation}\label{eq:matGs}
\matG = \begin{pNiceArray}{ccc}
1 {-} \sdeux d
&
\sdeux(1{-}d)
&
\sdeux(1{-}d) \\
\frac{e^{{-}\i\xi}}{2} (\sdeux d {+} \sun c)
&
\frac{e^{{-}\i\xi}}{2}
(2 {-} \sdeux (1{-}d) {-} \sun (1{-}c))
&
\frac{e^{{-}\i\xi}}{2}
({-} \sdeux(1{-}d){+}\sun(1{+}c)) \\
\frac{e^{\i\xi}}{2} (\sdeux d {-} \sun c)
&
\frac{e^{\i\xi}}{2}
({-} \sdeux(1{-}d){+}\sun(1{-}c) )
&
\frac{e^{\i\xi}}{2}
(2 {-} \sdeux (1{-}d) {-} \sun (1{+}c))
\end{pNiceArray}.
\end{equation}
The matrix $\matG$ can be rewritten in terms of the centered relaxation parameters $\tun$ and $\tdeux$ as
{\footnotesize\begin{equation}\label{eq:matG}
    \matG = \begin{pNiceArray}{ccc}
    1 {-} d {-} \tdeux d
    &
    (1{+}\tdeux)(1{-}d)
    &
    (1{+}\tdeux)(1{-}d) \\
    \frac{e^{{-}\i\xi}}{2} (
        d{+}c{+} \tdeux d {+} \tun c
    )
    &
    \frac{e^{{-}\i\xi}}{2}(
        d{+}c
        {-} \tdeux (1{-}d) {-} \tun (1{-}c)
    )
    &
    \frac{e^{{-}\i\xi}}{2}(
        d{+}c
        {-} \tdeux (1{-}d) {+} \tun (1{+}c)
    ) \\
    \frac{e^{\i\xi}}{2} (
        d{-}c {+} \tdeux d {-} \tun c
    )
    &
    \frac{e^{\i\xi}}{2}(
        d{-}c
        {-} \tdeux(1{-}d)
        {+}\tun(1{-}c) 
    )
    &
    \frac{e^{\i\xi}}{2}(
        d{-}c {-} \tdeux (1{-}d) {-} \tun (1{+}c)
    )
    \end{pNiceArray}.
\end{equation}}

\begin{remark}
    The amplification matrix does not depend on the choice of the second-order moment $P_2$ as long as two conditions are met.
    First, $P_2$ reads $\alpha X^2+\beta$, for any real values $\alpha\neq0$ and $\beta$.
    Second, the equilibrium values of the particle distribution functions are unchanged \cite{dubois_notion_2020}.
    In particular, a classical choice for the second-order moment is $\tilde P_2=3X^2-2\lambda^2$ for which the corresponding matrix of the moments has orthogonal rows
    \begin{equation*}
        \tilde \matM = \begin{pNiceArray}{ccc}
            1&1&1\\
            0&\lambda&-\lambda\\
            -2\lambda^2&\lambda^2&\lambda^2
        \end{pNiceArray},
        \qquad
        \tilde \matM^{-1} = \begin{pNiceArray}{ccc}
            \frac{1}{3} & 0 & -\frac{2}{6\lambda^2} \\
            \frac{1}{3} & \frac{1}{2\lambda} & \frac{1}{6\lambda^2} \\
            \frac{1}{3} & -\frac{1}{2\lambda} & \frac{1}{6\lambda^2}
        \end{pNiceArray}.    
    \end{equation*}
    The amplification matrix $\matG$ is then unchanged as long as ${\tilde \mdeux}^{\eq} = (3d-2)\lambda^2 \mz$.
\end{remark}

%%%%%%%%%%%%%%%%%%%%%%%%%%%%%%%%%%%%%%%%%%%%%%%%%%
 
%%%%%%%%%%%%%%%%%%%%%%%%%%%%%%%%%%%%%%%%%%%%%%%%%%
% SECTION CONSISTENCY
%%%%%%%%%%%%%%%%%%%%%%%%%%%%%%%%%%%%%%%%%%%%%%%%%%
%%%%%%%%%%%%%%%%%%%%%%%%%%%%%%%%%%%%%%%%%%%%%%%%%%
% SECTION CONSISTENCY
%%%%%%%%%%%%%%%%%%%%%%%%%%%%%%%%%%%%%%%%%%%%%%%%%%
\section{Consistency}\label{sec:consistency}

We now apply the algebraic framework developed by Bellotti et al. \cite{bellotti_finite_2022}, which establishes an exact correspondence between a lattice Boltzmann scheme and an equivalent multistep finite-difference scheme. This correspondence enables the systematic derivation of the associated macroscopic equations and provides access to the classical finite-difference analysis framework.

\begin{proposition}[Consistency]
    Assume that $\sun, \sdeux\neq 0$.
    Then, the modified equation of the scheme truncated at second order, cf. \cite[Theorem 3.7]{bellotti2023truncation}, is given by 
    \begin{equation}\label{eq:modEq2nd}
        \partial_t \mz +\lambda c \partial_x \mz 
        - \lambda^2 \Delta t \bigl( \tfrac{1}{2} {-} \tfrac{1}{\sun}\bigr) (d{-}c^2)\partial_{xx}\mz
        = \mathcal{O}(\Delta t^2).
    \end{equation}
    Therefore, the scheme is first-order accurate with respect to \eqref{eq:pde}  upon selecting $\lambda c = V$.
    Moreover, the scheme becomes second-order accurate regardless of $\sun$ by taking $d = c^2$.
    Under this choice, the modified equation truncated at third order is 
    \begin{equation}\label{eq:modEq3rd}
        \partial_t \mz + V \partial_x \mz 
        = -\lambda^3 \Delta t^2 c(1-c^2) 
        \bigl (\tfrac{1}{12}-\bigl( \tfrac{1}{\sun}{-}\tfrac{1}{2}\bigr ) \bigl( \tfrac{1}{\sdeux}{-}\tfrac{1}{2}\bigr )\bigr ) \partial_{xxx}\mz + \mathcal{O}(\Delta t^3).
    \end{equation}
\end{proposition}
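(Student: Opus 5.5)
The plan is to use the correspondence of \cite{bellotti_finite_2022} to eliminate the non-conserved moments and reduce the lattice Boltzmann scheme to a multistep finite-difference scheme on $\mz$ alone. Then I would expand that scheme in Taylor series, as in \cite[Theorem 3.7]{bellotti2023truncation}.

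\textbf{Step 1: the finite-difference equivalent.} Since the scheme is linear with three velocities, the characteristic polynomial of $\matG(\xi)$ gives the scheme. By Cayley--Hamilton it reads
\begin{equation*}
\hat{\mz}^{n+1} = \mathrm{tr}(\matG)\,\hat{\mz}^{n} - \chi_2(\matG)\,\hat{\mz}^{n-1} + \det(\matG)\,\hat{\mz}^{n-2},
\end{equation*}
where $\chi_2$ is the sum of the principal $2\times 2$ minors. Each coefficient is a trigonometric polynomial in $\xi$, hence a combination of shifts. I would compute these three invariants from \eqref{eq:matG} in terms of $\tun,\tdeux,c,d$. The determinant is simple: $\det\matT=1$ and $\det\matR=(1-\sun)(1-\sdeux)=\tun\tdeux$, so $\det\matG=\tun\tdeux$ independently of $\xi$.

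\textbf{Step 2: Taylor expansion.} Set $\xi=k\spacestep$, $\spacestep=\lambda\timestep$, and $\hat{\mz}^{n}=e^{\omega n\timestep}$. The dispersion relation is then $P(e^{\omega\timestep},\xi)=0$ with $P$ the characteristic polynomial. At $\xi=0$ the consistent root is $1$, and the other roots are $1-\sun$ and $1-\sdeux$; these are nonzero because $\sun,\sdeux\neq0$, so the root $1$ is simple. I would expand the branch $z(\xi)=e^{\omega\timestep}=1+z_1\xi+z_2\xi^2+z_3\xi^3+\dots$ by the implicit function theorem, which uses exactly the simplicity of the root $1$. Taking $\omega\timestep=\log z(\xi)$ gives $\omega$ as a series in $k$. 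Reading off the coefficients of $\i k$, $k^2$ and $\i k^3$ then yields the modified equation: the $\partial_x$ coefficient $\lambda c$, the diffusion coefficient proportional to $(\tfrac12-\tfrac1{\sun})(d-c^2)$, and, after imposing $d=c^2$, the $\partial_{xxx}$ coefficient. Equivalently, I could follow \cite[Theorem 3.7]{bellotti2023truncation} directly, which is the same expansion organised in physical space, and the statement refers to it.

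\textbf{Main obstacle.} The hard step is the third-order coefficient. It requires $z_3$, which depends on the $\xi$-derivatives of all three invariants up to third order together with the quadratic corrections $z_2$ and $z_1^2$ entering $\log z$. The simplification to $c(1-c^2)\bigl(\tfrac1{12}-(\tfrac1{\sun}-\tfrac12)(\tfrac1{\sdeux}-\tfrac12)\bigr)$ is delicate. I would organise the computation in the centered parameters, since $\tfrac1{\sun}-\tfrac12=\tfrac{1-\tun}{2(1+\tun)}$.

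As a check, for $\sun=\sdeux=1$ the bracket becomes $\tfrac1{12}-\tfrac14=-\tfrac16$. The right-hand side is then $\tfrac16\lambda^3\timestep^2c(1-c^2)\partial_{xxx}\mz$, which is exactly the Lax--Wendroff modified equation $\partial_t\mz+V\partial_x\mz=-\tfrac{V\spacestep^2}{6}(1-c^2)\partial_{xxx}\mz$ with $c=V/\lambda$. The second check is symmetry in $\sun\leftrightarrow\sdeux$: it should appear after eliminating the moments, although it is not visible in $\matG$ itself.
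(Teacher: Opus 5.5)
Your method is sound, but it takes a different route from the paper. You reduce the scheme by Cayley--Hamilton, expand the consistent root $z_\varphi$ of the characteristic polynomial, and read the modified equation off $\log z_\varphi$. The paper instead cites the physical-space expansion of \cite[Theorem 3.7]{bellotti2023truncation} and says the third-order term follows by ``analogous computations''. To use your route you should state explicitly that, for a linear constant-coefficient multistep scheme, the modified equation of that theorem coincides with the one read off the consistent root.

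There is also a small slip in Step 2. Simplicity of the root $1$ at $\xi=0$ requires $1-\sun\neq 1$ and $1-\sdeux\neq1$, which is what $\sun,\sdeux\neq0$ gives. These roots need not be nonzero; $\sun=1$ is harmless.

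The genuine problem is your Lax--Wendroff check. It does not match, although you declared that it does. Since $V\spacestep^2=\lambda c\cdot\lambda^2\timestep^2=\lambda^3\timestep^2c$, the Lax--Wendroff equation you quote has right-hand side $-\tfrac16\lambda^3\timestep^2c(1-c^2)\partial_{xxx}\mz$. But \eqref{eq:modEq3rd} at $\sun=\sdeux=1$ gives $+\tfrac16\lambda^3\timestep^2c(1-c^2)\partial_{xxx}\mz$.

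Carrying out your plan shows the displayed sign is wrong for all parameters. Set $B=\bigl(\tfrac1{\sun}-\tfrac12\bigr)\bigl(\tfrac1{\sdeux}-\tfrac12\bigr)$. The paper's own expansion \eqref{eq:taylorPhysicalMode} gives
\begin{equation*}
\log z_\varphi(\xi) = -ic\xi + ic(1-c^2)\bigl(B-\tfrac1{12}\bigr)\xi^3 + \mathcal{O}(\xi^4).
\end{equation*}
Plane waves $e^{\omega t+ikx}$ of $\partial_t\mz+V\partial_x\mz=\mu\,\partial_{xxx}\mz$ satisfy $\omega=-iVk-i\mu k^3$. With $\xi=k\lambda\timestep$ this yields $\mu=+\lambda^3\timestep^2c(1-c^2)\bigl(\tfrac1{12}-B\bigr)$, so there is no leading minus sign.

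The same happens at second order. For $\sun=\sdeux=1$ the scheme is $\mz_j^{n+1}=\mz_j^n-\tfrac c2(\mz_{j+1}^n-\mz_{j-1}^n)+\tfrac d2(\mz_{j+1}^n-2\mz_j^n+\mz_{j-1}^n)$. Its modified equation is $\partial_t\mz+\lambda c\,\partial_x\mz=\tfrac12\lambda^2\timestep(d-c^2)\partial_{xx}\mz$. So the factor in \eqref{eq:modEq2nd} must be $\bigl(\tfrac1{\sun}-\tfrac12\bigr)$, not $\bigl(\tfrac12-\tfrac1{\sun}\bigr)$.

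Hence no correct computation, yours or the paper's, can produce the two displayed equations as printed. What your argument actually proves is the proposition with both signs reversed. The accuracy conclusions do not depend on this sign, and they are what you can legitimately claim:
\begin{itemize}
\item first order with $\lambda c=V$;
\item second order for $d=c^2$, whatever $\sun$;
\item third order exactly when $B=\tfrac1{12}$.
\end{itemize}
A sanity check is only useful if you compare signs literally. Here it was the one step that would have exposed the discrepancy.
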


\begin{proof}
    Equation \eqref{eq:modEq2nd} is given by \cite[Theorem 3.7]{bellotti2023truncation}.
    Equation \eqref{eq:modEq3rd} can be obtained by analogous computations at the next order.
\end{proof}

From now on in the paper, we adopt the choice $d = c^2$, so that $d$ is no longer a free parameter.
The product $( \frac{1}{\sun}{-}\tfrac{1}{2} ) ( \frac{1}{\sdeux}{-}\tfrac{1}{2} )$ is the so-called ``magic parameter'' \cite{dhumieres_viscosity_2009}, and the fact of achieving third-order accuracy by taking it equal to $1/12$ is well-known within the lattice Boltzmann community, see \cite{ginzburg2008consistent, kuzmin2011role}.

For the previous consistency analysis, we have assumed that $\sun, \sdeux\neq 0$.
When one or both vanish, the study must be adapted accordingly. When $\sun = 0$, the scheme is not consistent with the target equation.
Assuming $\sun\neq 0$, when $\sdeux = 0$, it is only first-order accurate despite having taken $d = c^2$, unless $\sun = 2$, in which case the scheme becomes second-order accurate.
This fact can be seen by following the procedure by \cite{bellotti2023truncation} for the specific scheme at hand, or by performing the ``equivalent equation'' analysis following the guidelines of \cite{dubois2008equivalent, dubois2022nonlinear}.

We shall come back to consistency from the standpoint of Fourier analysis in Sec.~\ref{sec:convergence}, dedicated to convergence.
Indeed, upon building a suitable link between $\vectm^0$ and the initial datum $\mz^{\circ}$ of the Cauchy problem, see \eqref{eq:initialcondition}, we expect and hope that $\mz_{j}^{n}$ be a ``good approximation'' of the solution $\mz(n\timestep, j\spacestep)$.
%%%%%%%%%%%%%%%%%%%%%%%%%%%%%%%%%%%%%%%%%%%%%%%%%%

%%%%%%%%%%%%%%%%%%%%%%%%%%%%%%%%%%%%%%%%%%%%%%%%%%
% SECTION STABILITY
%%%%%%%%%%%%%%%%%%%%%%%%%%%%%%%%%%%%%%%%%%%%%%%%%%
%%%%%%%%%%%%%%%%%%%%%%%%%%%%%%%%%%%%%%%%%%%%%%%%%%
% SECTION STABILITY
%%%%%%%%%%%%%%%%%%%%%%%%%%%%%%%%%%%%%%%%%%%%%%%%%%
\section{Stability}\label{sec:stability}

% This section is devoted to give an explicit relation between the relaxation parameters that guaranties the stability of the scheme. This condition is a necessary and sufficient condition.
% We first remind some definitions and properties on complex polynomials.

This section aims to provide an explicit relation between the relaxation parameters ensuring stability of the scheme. This condition is both necessary and sufficient. We first recall some definitions and properties of polynomials with complex coefficients.

%%%%%%%%%%%%%%%%%%%%%%%%%%%%%%%%%%%%%%%%%%%%%%%%%%
% SUBSECTION REMINDER
%%%%%%%%%%%%%%%%%%%%%%%%%%%%%%%%%%%%%%%%%%%%%%%%%%
\subsection{Reminder on complex polynomials}

% We begin with some definitions and notations of the various classes of polynomials, about which the stability conditions will be concerned. These classes are all distinguished by the location of their zeros relative to the unit circle. This section is based on \cite{miller_location_1971,marden_geometry_1966, strikwerda_finite_2004}.
% We emphasize that the use of these tools to analyze stability of lattice Boltzmann scheme is not new, see for instance \cite{ginzburg2010optimal}.

We begin by introducing definitions and notation for several classes of polynomials relevant to the stability analysis, which are characterized by the location of their zeros with respect to the unit circle. This section is based on \cite{miller_location_1971, marden_geometry_1966, strikwerda_finite_2004}. We emphasize that these tools have already been used in the stability analysis of lattice Boltzmann schemes; see, for instance, \cite{ginzburg2010optimal}, where the stability condition reduces to locating the roots of a second-degree polynomial.

Let $\varphi\in\mathbb{C}[z]$ be a polynomial with complex coefficients of degree $d$,
\begin{equation*}
\varphi = a_d z^d + \ldots + a_0 = \sum_{\ell=0}^d a_\ell z^\ell.
\end{equation*}
We say that $\varphi$ is of exact degree $d$ if $a_d$ is not zero.

\begin{definition}[Schur polynomial]
    The polynomial $\varphi$ is a \emph{Schur polynomial} if all its roots $r$ satisfy $\vert r\vert < 1$. We denote by $\Sp$ the set of the Schur polynomials.
\end{definition}

\begin{definition}[von Neumann polynomial]
    The polynomial $\varphi$ is a \emph{von Neumann polynomial} if all its roots $r$ satisfy $\vert r\vert \leq 1$. We denote by $\vNp$ the set of the von Neumann polynomials.
\end{definition}
    
\begin{definition}[simple von Neumann polynomial]
    The polynomial $\varphi$ is a \emph{simple von Neumann polynomial} if $\varphi$ is a von Neumann polynomial and its roots on the unit circle are simple roots. We denote by $\svNp$ the set of the simple von Neumann polynomials.
\end{definition}

We finally define the Cohn-Schur operator $\cohnschur:\mathbb{C}[z]\to\mathbb{C}[z]$ an application that decreases the degree.
For any polynomial $\varphi$ of exact degree $d$ we first define the polynomial $\varphi^\star$ by
\begin{equation*}
\varphi^\star = \bar a_0 z^d + \ldots + \bar a_d = \sum_{\ell=0}^d \bar a_{d-\ell} \; z^\ell,
\end{equation*}
where the bar on the coefficients of $\varphi$ denotes the complex conjugate and then
\begin{equation*}    
    \cohnschur(\varphi) = \frac{
        \varphi^\star(0)\, \varphi
        -
        \varphi(0)\, \varphi^\star
        }{z}.
\end{equation*}
Remark that $\cohnschur(\varphi)$ is indeed a polynomial as the constant term of $\varphi^\star(0) \varphi-\varphi(0) \varphi^\star$ vanishes.
It is easy to see that the degree of $\cohnschur(\varphi)$ is less than that of $\varphi$.
We also denote by a prime $\varphi^\prime$ the derivative of the polynomial $\varphi$ with respect to its determinate.

The next three theorems give recursive tests for Schur, von Neumann and simple von Neumann polynomials \cite{miller_location_1971}. 

\begin{theorem}[Miller 6.2 page 404]\label{thm:strikwerda431}
    The polynomial $\varphi$ is a Schur polynomial of exact degree $d$ if, and only if, $\vert\varphi(0)\vert < \vert\varphi^\star(0)\vert$ and $\cohnschur(\varphi)$ is a Schur polynomial of exact degree $d{-}1$.
\end{theorem}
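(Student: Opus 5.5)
The plan is the classical Schur--Cohn argument: compare $\varphi$ with $\varphi^\star$ on the unit circle and transfer the root count from $\varphi$ to $\cohnschur(\varphi)$ using Rouché's theorem. Write $\varphi(z)=\sum_{\ell} a_\ell z^\ell$ of exact degree $d$. The starting observation is that for $|z|=1$ we have $\varphi^\star(z)=z^d\overline{\varphi(1/\bar z)}=z^d\overline{\varphi(z)}$. Hence $|\varphi^\star(z)|=|\varphi(z)|$ on the unit circle. Moreover, the roots of $\varphi^\star$ are the reflections $1/\bar r$ of the nonzero roots $r$ of $\varphi$, with degree dropping by one for each root at the origin. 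Note also that $\varphi^\star(0)=\bar a_d$ and $\varphi(0)=a_0$.

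\emph{Necessity.} Assume $\varphi$ is Schur of exact degree $d$. Since $|a_0/a_d|$ is the product of the moduli of the roots, we get $|a_0|<|a_d|$, that is $|\varphi(0)|<|\varphi^\star(0)|$. Then consider $z\,\cohnschur(\varphi)=\bar a_d\varphi-a_0\varphi^\star$ on $|z|=1$. There
\[
|a_0\varphi^\star(z)|=|a_0||\varphi(z)|<|a_d||\varphi(z)|,
\]
and $\varphi$ has no zero on the circle, so this strict inequality holds everywhere on it. Rouché's theorem then gives that $z\,\cohnschur(\varphi)$ has exactly $d$ zeros in the open disk. The leading coefficient of $z\,\cohnschur(\varphi)$ is $|a_d|^2-|a_0|^2\neq0$, so it has exact degree $d$ and therefore all its zeros lie inside the disk. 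Removing the factor $z$ shows that $\cohnschur(\varphi)$ has exact degree $d-1$ and is Schur.

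\emph{Sufficiency.} Assume $|a_0|<|a_d|$ and that $\psi=\cohnschur(\varphi)$ is Schur of exact degree $d-1$. First show that $\varphi$ has no zero on the unit circle. If $\varphi(z_0)=0$ with $|z_0|=1$, then $\varphi^\star(z_0)=z_0^d\overline{\varphi(z_0)}=0$, so $z_0\psi(z_0)=0$, contradicting $\psi\in\Sp$. On the circle we can therefore reuse the strict inequality $|a_0\varphi^\star|<|\bar a_d\varphi|$. Rouché applied to $\bar a_d\varphi$ and $z\psi=\bar a_d\varphi-a_0\varphi^\star$ shows they have the same number of zeros in the disk. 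That number is $1+(d-1)=d$, so $\varphi$ has all $d$ zeros in the open disk. Finally, $a_d\neq0$ follows from $|a_d|>|a_0|\ge0$, so $\varphi$ has exact degree $d$.

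The main delicate point is the boundary analysis: excluding zeros on the unit circle so that the strict Rouché inequality holds. The identity $\varphi^\star=z^d\bar\varphi$ on $|z|=1$ handles this in both directions. A secondary point is bookkeeping of exact degrees, since $\varphi^\star$ loses degree when $a_0=0$. This is harmless because the argument only uses the coefficient $|a_d|^2-|a_0|^2$ of $z^d$.
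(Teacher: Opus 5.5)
Your proof is correct and follows the route the paper points to: the paper gives no proof of its own but cites Miller and notes that these results rest on Rouch\'e's theorem, which is exactly your comparison of $\bar a_d\varphi$ with $z\,\cohnschur(\varphi)=\bar a_d\varphi-a_0\varphi^\star$, using $|\varphi^\star|=|\varphi|$ on the unit circle and the boundary check that excludes unimodular roots in the sufficiency direction. As in the statement itself, your necessity step tacitly assumes $d\geq 1$ (for $d=0$ the condition $|\varphi(0)|<|\varphi^\star(0)|$ can never hold), and this is harmless.
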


\begin{theorem}[Miller 6.1 page 403]\label{thm:strikwerda437}
    The polynomial $\varphi$ is a von Neumann polynomial of exact degree $d$ if, and only if, either
    \begin{enumerate}[label=\emph{(\alph*)}]
        \item $\vert\varphi(0)\vert < \vert\varphi^\star(0)\vert$ and $\cohnschur(\varphi)$ is a von Neumann polynomial of exact degree $d{-}1$ or
        \item $\cohnschur(\varphi)$ is identically zero and $\varphi^\prime$ is a von Neumann polynomial of exact degree $d{-}1$.
    \end{enumerate}
\end{theorem}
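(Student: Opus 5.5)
This is Miller's Theorem 6.1, a classical characterization of von Neumann polynomials. The plan is to reduce it to Theorem~\ref{thm:strikwerda431} together with two standard facts about the reflected polynomial $\varphi^\star$.

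The first fact is the identity $\varphi^\star(z)=z^d\,\overline{\varphi(1/\bar z)}$. It shows that the roots of $\varphi^\star$ are the reflections $1/\bar r$ of the nonzero roots $r$ of $\varphi$, plus a root at $0$ of multiplicity equal to $d$ minus the number of nonzero roots. It also shows that $|\varphi(z)|=|\varphi^\star(z)|$ on $|z|=1$.

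The second fact is the usual Rouché argument behind the Cohn--Schur recursion. Assume $|\varphi(0)|<|\varphi^\star(0)|$. Then on $|z|=1$ we have $|\varphi^\star(0)\varphi(z)|>|\varphi(0)\varphi^\star(z)|$ whenever $\varphi(z)\neq0$. Set $z\,\cohnschur(\varphi)=\varphi^\star(0)\varphi-\varphi(0)\varphi^\star$.
\begin{itemize}
\item The polynomials $z\,\cohnschur(\varphi)$ and $\varphi$ have the same roots on the unit circle, with the same multiplicities. This uses $|\varphi|=|\varphi^\star|$ there: a common root of $\varphi$ and $\varphi^\star$ is a root of both sides to the same order.
\item They also have the same number of roots inside the open disk. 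Apply Rouché on circles of radius $1-\varepsilon$, or on slightly deformed contours avoiding unimodular roots.
\item Consequently $\cohnschur(\varphi)$ has one fewer root in the open disk and the same number on the circle.
\item Its exact degree is $d-1$, because its leading coefficient is $|\varphi^\star(0)|^2-|\varphi(0)|^2\neq0$.
\end{itemize}
In particular, in case (a), all roots of $\varphi$ lie in the closed disk if and only if all roots of $\cohnschur(\varphi)$ do.

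For the direction "if": case (a) follows immediately from the above. In case (b), $\cohnschur(\varphi)\equiv0$ means $\varphi^\star(0)\varphi=\varphi(0)\varphi^\star$. If $\varphi(0)=0$, then $\varphi^\star(0)=\bar a_d\ne0$ forces $\varphi\equiv0$, which is impossible, so $\varphi(0)\neq0$. Hence $\varphi$ is a nonzero multiple of $\varphi^\star$, i.e. $\varphi$ is self-inversive, and its roots are symmetric under $r\mapsto1/\bar r$. For self-inversive polynomials, Cohn's theorem states that all roots lie on the unit circle if and only if $\varphi'$ has all its roots in the closed unit disk. I would prove the needed direction via the relation $d\,\varphi=z\varphi'+\theta\,(\varphi')^\star$ with $|\theta|=1$, which comes from differentiating $\varphi^\star=\mu\varphi$. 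If $\varphi'$ is von Neumann, a Rouché or maximum-principle argument on $|z|>1$ shows that $\varphi$ has no root outside the disk; by symmetry it then has none inside either, so all roots are unimodular.

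For the direction "only if", suppose $\varphi$ is von Neumann of exact degree $d$. Then $|\varphi(0)|=|a_d|\prod|r_i|\le|a_d|=|\varphi^\star(0)|$.
\begin{itemize}
\item If the inequality is strict, the root-counting above shows that $\cohnschur(\varphi)$ is von Neumann of exact degree $d-1$, giving (a).
\item If equality holds, every root has modulus $1$. Then $\varphi^\star=\mu\varphi$ for some unimodular $\mu$, because the reflection fixes each root. Hence $\cohnschur(\varphi)=0$. By the Gauss--Lucas theorem the roots of $\varphi'$ lie in the convex hull of the roots of $\varphi$, hence in the closed disk, and $\varphi'$ has exact degree $d-1$. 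This gives (b).
\end{itemize}

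The main obstacle is the self-inversive step in the direction "if" of case (b), namely showing that $\varphi'$ von Neumann forces all roots of $\varphi$ onto the circle. I would first try the identity above. Write $\varphi'=z^{-1}(d\varphi-\theta(\varphi')^\star)$ and compare $|z\varphi'|$ with $|(\varphi')^\star|$ for $|z|>1$. When $\varphi'$ has all roots in the closed disk, $|(\varphi')^\star(z)|\le|z\varphi'(z)|$ there, so the analogue of the Rouché argument gives the desired root location. Alternatively, I can simply cite Miller's original proof, as the paper does.
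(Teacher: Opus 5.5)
Your argument is correct. It is essentially the classical proof that the paper does not reproduce but only cites from Miller, remarking that it rests on Rouch\'e's theorem: Rouch\'e-based root counting for case (a), and for case (b) Cohn's characterization of self-inversive polynomials, via $d\,\varphi=z\varphi'+\theta(\varphi')^\star$ in one direction and Gauss--Lucas in the other.

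Two small corrections are worth making. First, since $a_d\neq0$ you have $\varphi^\star(0)=\bar a_d\neq0$, so $\varphi^\star$ has no root at $0$; its degree simply drops by the multiplicity of $0$ as a root of $\varphi$. Second, in case (b) you should state the strict bound $|(\varphi')^\star(z)|\le|\varphi'(z)|<|z\,\varphi'(z)|$ for $|z|>1$, which follows from $|1-\bar w z|^2=|z-w|^2-(|z|^2-1)(1-|w|^2)$ together with $\varphi'(z)\neq0$ there. This gives $\varphi(z)\neq0$ for $|z|>1$ directly, with no Rouch\'e argument needed.
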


\begin{theorem}[Miller 6.3 page 404]\label{thm:strikwerda432}
    The polynomial $\varphi$ is a simple von Neumann polynomial of exact degree $d$ if, and only if, either
    \begin{enumerate}[label=\emph{(\alph*)}]
        \item $\vert\varphi(0)\vert < \vert\varphi^\star(0)\vert$ and $\cohnschur(\varphi)$ is a simple von Neumann polynomial of exact degree $d{-}1$ or
        \item $\cohnschur(\varphi)$ is identically zero and $\varphi^\prime$ is a Schur polynomial of exact degree $d{-}1$.
    \end{enumerate}
\end{theorem}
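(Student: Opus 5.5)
The statement to prove is Theorem~\ref{thm:strikwerda432}, the recursive test for simple von Neumann polynomials. The plan rests on one identity, valid for $\vert z\vert=1$ and $\varphi$ of exact degree $d$:
\begin{equation*}
\varphi^\star(z)=z^d\,\overline{\varphi(z)}, \qquad\text{hence}\qquad \vert\varphi^\star(z)\vert=\vert\varphi(z)\vert .
\end{equation*}
Combined with $z\,\cohnschur(\varphi)=\varphi^\star(0)\varphi-\varphi(0)\varphi^\star$, this lets Rouché-type arguments transfer root locations between $\varphi$ and $\cohnschur(\varphi)$. I treat the two alternatives (a) and (b) separately, and in each I prove both directions. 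Theorem~\ref{thm:strikwerda431} and Theorem~\ref{thm:strikwerda437} may be used as already established.

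\emph{Case $\vert\varphi(0)\vert<\vert\varphi^\star(0)\vert$.} The coefficient of $z^{d-1}$ in $\cohnschur(\varphi)$ is $\vert a_0\vert^2-\vert a_d\vert^2\neq0$, so $\cohnschur(\varphi)$ has exact degree $d-1$.
\begin{itemize}
\item \textbf{Outside the unit circle.} Since $\vert\varphi(0)\vert<\vert\varphi^\star(0)\vert$ and $\vert\varphi^\star\vert=\vert\varphi\vert$ on the circle, Rouché applied on $\vert z\vert=1+\varepsilon$ shows that $\varphi^\star(0)\varphi$ and $z\,\cohnschur(\varphi)$ have the same number of roots there. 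This is cleanest after dividing out the common unimodular roots, as follows.
\item \textbf{On the unit circle.} Suppose $\varphi(z_0)=0$ with $\vert z_0\vert=1$. Then $\varphi^\star(z_0)=0$, so $z_0$ is a root of $\cohnschur(\varphi)$. For the multiplicity, write $\varphi=\psi\chi$, where $\chi$ collects the unimodular roots. Then $\chi^\star$ is a unimodular multiple of $\chi$, so $\cohnschur(\varphi)$ equals $\chi$ times a combination of $\psi$ and $\psi^\star$ with the same leading data.
\item \textbf{Conclusion.} The unimodular roots of $\varphi$ and $\cohnschur(\varphi)$ therefore coincide with multiplicities. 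On $\psi$, which has no unimodular roots, Theorem~\ref{thm:strikwerda431} in the Schur-type count gives that $\psi$ has all its roots inside if and only if its reduced Cohn–Schur polynomial does. This yields (a) in both directions, and rules out $\vert\varphi(0)\vert>\vert\varphi^\star(0)\vert$ for von Neumann polynomials, since the product of the roots is $a_0/a_d$.
\end{itemize}

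\emph{Case $\cohnschur(\varphi)\equiv0$.}
\begin{itemize}
\item \textbf{Self-inversive form.} Here $\varphi^\star(0)\varphi=\varphi(0)\varphi^\star$ with $\vert a_0\vert=\vert a_d\vert$, so $\varphi^\star=\omega\varphi$ with $\vert\omega\vert=1$. The roots of $\varphi$ are then symmetric under $r\mapsto1/\bar r$.
\item \textbf{Reduction to $\varphi'$.} By Cohn's theorem, a self-inversive polynomial has all its roots on the unit circle if and only if $\varphi'$ has all its roots in the closed disc. I would prove this via the Gauss–Lucas theorem for one direction. For the converse, use the identity $d\varphi-z\varphi'=\omega'(\varphi')^\star$, which gives $\vert\varphi'\vert=\vert(\varphi')^\star\vert+\dots$, so a comparison of $\vert z\varphi'\vert$ and $\vert d\varphi-z\varphi'\vert$ on $\vert z\vert>1$ excludes roots outside.
\item \textbf{Simplicity.} Since all roots lie on the circle, simplicity means exactly that no root of $\varphi'$ lies on the circle. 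So $\varphi\in\svNp$ if and only if $\varphi'$ is Schur. This is consistent with Theorem~\ref{thm:strikwerda437}(b), where $\varphi'$ is only required to be von Neumann.
\end{itemize}

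\emph{Main obstacle.} The delicate step is the multiplicity bookkeeping in case (a): showing that each unimodular root keeps exactly the same multiplicity in $\cohnschur(\varphi)$. I would handle it through the factorization $\varphi=\psi\chi$ with $\chi$ self-inversive, which gives $\cohnschur(\varphi)=c\,\chi\,\cohnschur(\psi)$ up to a scalar. Checking this identity carefully, including the normalisation $\chi^\star=\eta\chi$, is the core computation.

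Alternatively, since the statement is quoted from \cite{miller_location_1971}, one may simply cite Miller's Theorem 6.3, which is presumably what the paper does.
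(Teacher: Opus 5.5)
The paper does not prove this statement; it quotes it from Miller and only remarks that the recursive tests rest on Rouch\'e's theorem. Your proposal is therefore a different, self-contained route, and its architecture is sound.

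For alternative (a), write $\varphi=\psi\chi$, where $\chi$ carries the unimodular roots and $\chi^\star=\eta\chi$ with $|\eta|=1$. Then the identity $\cohnschur(\varphi)=\eta\,\chi(0)\,\chi\,\cohnschur(\psi)$ holds exactly, and $|\varphi(0)|<|\varphi^\star(0)|\iff|\psi(0)|<|\psi^\star(0)|$. So (a) reduces to the Schur test of Theorem~\ref{thm:strikwerda431} applied to $\psi$.

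For alternative (b), you reduce to Cohn's theorem on self-inversive polynomials. Your identity $d\varphi-z\varphi'=\bar\omega\,(\varphi')^\star$ is correct. Combined with $|p^\star(z)|\le|p(z)|$ for $|z|>1$ whenever $p\in\vNp$, it gives $|z\varphi'(z)|>|(\varphi')^\star(z)|$, hence $\varphi(z)\neq0$ outside the closed disc.

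Compared with the bare citation, you gain a proof that rests only on the Schur test and elementary facts. The price is some bookkeeping that you assert rather than prove. There are three such steps, each a one-liner to fix.

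\emph{(i) Unimodular roots of $\cohnschur(\varphi)$.} The claim that the unimodular roots of $\varphi$ and $\cohnschur(\varphi)$ coincide with multiplicities does not follow from the factorization alone. The factorization only shows that they are roots of $\cohnschur(\varphi)$ with at least the same multiplicity. You also need that $\cohnschur(\psi)$ does not vanish on $|z|=1$. This is exactly what the converse of (a) uses: $\cohnschur(\varphi)\in\svNp$ only yields $\cohnschur(\psi)\in\svNp$, whereas Theorem~\ref{thm:strikwerda431} needs a Schur polynomial. The missing fact follows from $|z\,\cohnschur(\psi)(z)|\geq(|\psi^\star(0)|-|\psi(0)|)\,|\psi(z)|>0$ on $|z|=1$. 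Alternatively, apply Theorem~\ref{thm:strikwerda437}(a) to $\psi$ and use that $\psi$ has no unimodular roots. Also drop the Rouch\'e argument on $|z|=1+\varepsilon$, since $|\varphi^\star|=|\varphi|$ fails there.

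\emph{(ii) The forward direction misses a case.} Your two cases do not exhaust $\varphi\in\svNp$. If $\varphi\in\svNp$ and $|\varphi(0)|=|\varphi^\star(0)|$, the product of the root moduli is $1$, so all roots are unimodular. Then $\varphi^\star=\eta\varphi$, hence $\cohnschur(\varphi)\equiv0$. Only after this step are you in case (b).

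\emph{(iii) Simplicity in case (b).} The claim ``simple iff $\varphi'$ has no unimodular root'' needs that a unimodular zero $w$ of $\varphi'$ is a zero of $\varphi$. Evaluate your identity at $w$ and use $(\varphi')^\star(w)=w^{d-1}\overline{\varphi'(w)}=0$. Alternatively, use the vertex case of Gauss--Lucas.

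Finally, a minor slip: the $z^{d-1}$ coefficient of $\cohnschur(\varphi)$ is $|a_d|^2-|a_0|^2$, not its opposite.
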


The proofs of these three theorems are lying on Rouché's theorem from complex analysis \cite{rouche_memoire_1862}.

\begin{definition}
    We denote
    \begin{equation*}
        \ensinegalite = \Bigl\lbrace
        \varphi \in\mathbb{C}[z] \text{ s.t. } 
        \vert\varphi(0)\vert < \vert\varphi^\star(0)\vert
        \Bigr\rbrace
    % \end{equation*}
    % and
    % \begin{equation*}
    \quad\textrm{and}\quad
        \ensegalite = \Bigl\lbrace
        \varphi \in\mathbb{C}[z] \text{ s.t. } 
        \cohnschur(\varphi) = 0
        \Bigr\rbrace .%= \operatorname{Ker}(\cohnschur).
    \end{equation*}
\end{definition}

With these notations, theorems \ref{thm:strikwerda431}, \ref{thm:strikwerda437}, and \ref{thm:strikwerda432} read
\begin{align}
    \varphi\in\Sp &\iff \phantom{\Big(}\varphi\in\ensinegalite \text{ and } \cohnschur(\varphi)\in\Sp,\\
    \varphi\in\vNp &\iff \Bigl(
        \varphi\in\ensinegalite \text{ and } \cohnschur(\varphi) \in\vNp
    \Bigr) \text{ or } \Bigl(
        \varphi\in\ensegalite \text{ and } \varphi'     \in\vNp
    \Bigr),\\
    \varphi\in\svNp &\iff \Bigl(
        \varphi\in\ensinegalite \text{ and } \cohnschur(\varphi) \in\svNp
    \Bigr) \text{ or } \Bigl(
        \varphi\in\ensegalite \text{ and } \varphi' \in\Sp
    \Bigr).
\end{align}

Note that we cannot have simultaneously $\varphi\in\ensinegalite\cap\ensegalite$. Indeed, if $\varphi$ is of exact degree $d$ and belongs to $\ensinegalite$, the polynomial $\cohnschur(\varphi)$ is of exact degree $d{-}1$ as its term of degree $d{-}1$ is equal to $\vert\varphi^\star(0)\vert^2-\vert\varphi(0)\vert^2>0$.

% Cas d'un polynôme de degré 1 :
% varphi = alpha X + beta
% varphi* = bar beta X + bar alpha
% C(varphi) = (|alpha|^2 X + bar alpha beta 
%               - |beta|^2 X - beta bar alpha)/X
%           = |alpha|^2-|beta|^2 
%
% varphi in Sp iff |beta| < |alpha|
% 
% varphi in svNp iff varphi in vNp iff
%   |beta| <= |alpha|
%%%%%%%%%%%%%%%%%%%%%%%%%%%%%%%%%%%%%%%%%%%%%%%%%%
% SUBSECTION DEFINITION STABILITY
%%%%%%%%%%%%%%%%%%%%%%%%%%%%%%%%%%%%%%%%%%%%%%%%%%
\subsection{Definition of the stability}

We consider in this contribution the $\ell^2$-stability. As the amplification matrix $\matG$ given in \eqref{eq:matG} does not depend on the time step $\Delta t$ or the space step $\Delta x$, the definition for the stability reads as follow.

\begin{definition}[Stability] \label{thm:defstab}
    The lattice Boltzmann scheme characterized by the amplification matrix $\matG$ is said stable if there exists one constant $C>0$ such that 
    \begin{equation*}
        % \exists C > 0 \quad s.t. \quad
        \forall n \in\mathbb{N}, \quad \forall \xi\in[-\pi,\pi]\qquad |\matG(\xi)^n|\leq C.
    \end{equation*}
\end{definition}

%The fact that this definition be fulfilled is equivalent, see \cite[Proposition 1]{coulombel:cel-00616497} and thanks to the Parseval's relation, to the $\ell^2$ control $\sum_{j\in\mathbb{N}}\spacestep |\vectf_j^n|^2\leq C^2 \sum_{j\in\mathbb{N}}\spacestep |\vectf_j^0|^2$ for all $n \in\mathbb{N}$ and for all $\vectf_j^0$.
The fact that this definition be fulfilled is equivalent to the $\ell^2$ control
$$
\spacestep \sum_{j\in\mathbb{Z}} |\vectf_j^n|^2
\leq C^2 
\spacestep \sum_{j\in\mathbb{Z}} |\vectf_j^0|^2
$$
for all $n \in\mathbb{N}$ and for all $\vectf_j^0$
thanks to the Parseval's relation.
% \cite[Proposition 1]{coulombel:cel-00616497}.
% \cite{coulombel:cel-00616497}.
The following result gives a necessary condition for stability.%, and is similar to \cite[Lemma 3]{coulombel:cel-00616497}.
Its sole necessary character comes from the fact that is does not ensure a bound uniformly in $\xi$.
\begin{proposition}
    If the lattice Boltzmann scheme characterized by the amplification matrix $\matG$ is stable, then the minimal polynomial of $\matG$ is a simple von Neumann polynomial for any $\xi\in[-\pi,\pi]$.
\end{proposition}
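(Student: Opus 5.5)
We argue by contraposition, working at a fixed frequency $\xi\in[-\pi,\pi]$. Stability in the sense of Definition~\ref{thm:defstab} implies in particular that, for this fixed $\xi$, the sequence of powers $(\matG(\xi)^n)_{n\in\mathbb{N}}$ is bounded in norm. So it suffices to prove a purely finite-dimensional fact. If a square complex matrix $A$ has bounded powers, then its minimal polynomial $\mu_A$ is a simple von Neumann polynomial. Recall that $\mu_A$ is $\prod_k (z-r_k)^{m_k}$, where $r_k$ runs over the distinct eigenvalues and $m_k$ is the size of the largest Jordan block associated with $r_k$. Being in $\svNp$ then means two things: every $|r_k|\le 1$, and $m_k=1$ whenever $|r_k|=1$.

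First step: all eigenvalues lie in the closed unit disk. Take an eigenvalue $r$ of $A$ with eigenvector $v\neq 0$. Then $A^n v = r^n v$, so $|r|^n |v| \le |A^n|\,|v| \le C|v|$ for all $n$. Letting $n\to\infty$ forces $|r|\le 1$. Hence $\mu_A\in\vNp$.

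Second step: eigenvalues on the unit circle are semisimple. Suppose $|r|=1$ and the corresponding Jordan block has size $m\ge 2$. Then there are vectors $v_1\neq 0$ and $v_2$ with $(A-r I)v_1=0$ and $(A-rI)v_2=v_1$. Writing $A = rI + N$ on $\operatorname{span}(v_1,v_2)$ with $Nv_2=v_1$ and $N^2v_2=0$, the binomial formula gives
\begin{equation*}
A^n v_2 = r^n v_2 + n r^{n-1} v_1 .
\end{equation*}
This yields $|A^n v_2| \ge n|v_1| - |v_2|$, which tends to infinity and contradicts $|A^n|\le C$. Therefore every root of $\mu_A$ of modulus one is simple, that is, $\mu_A\in\svNp$. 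Applying this to $A=\matG(\xi)$ for each $\xi$ concludes the proof.

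There is no real obstacle in this argument. The only point needing care is to phrase the second step in terms of the minimal polynomial rather than the characteristic polynomial. The characteristic polynomial of $\matG$ could have a repeated unimodular root that is nonetheless semisimple, so the statement is correctly formulated with the minimal polynomial, whose root multiplicities are exactly the maximal Jordan block sizes. Note also that this argument uses only a bound at each fixed $\xi$, not one uniform in $\xi$. This is why the condition is merely necessary, as the paper remarks: a simple von Neumann minimal polynomial at each $\xi$ bounds the powers by a constant $C(\xi)$ that may blow up as $\xi$ approaches a point where eigenvalues coalesce.
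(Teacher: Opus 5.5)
Your proof is correct and takes essentially the same approach as the paper: fix $\xi$, use that stability gives bounded powers of $\matG(\xi)$, and read the condition off the Jordan structure, where the root multiplicities of the minimal polynomial are the maximal Jordan block sizes. The only difference is that you explicitly verify the needed direction (an eigenvector argument for $|r|\le 1$, and a length-two Jordan chain giving linear growth $n r^{n-1} v_1$), whereas the paper simply asserts the corresponding equivalence for the Jordan form.
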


\begin{proof}
    The proof can be found in \cite[Lemma 3]{coulombel:cel-00616497}. For readibility reasons, we present it here briefly. 
    The key point is the Dunford-Jordan decomposition of the complex square matrices. The matrix $\matG$ is equivalent to a block diagonal matrix that reads
    \begin{equation*}
        \begin{pNiceMatrix}[nullify-dots]
            J_{\lambda_1} & 0 & \Cdots & 0\\ 
            0 & \Ddots & \Ddots & \Vdots\\
            \Vdots & \Ddots & \Ddots & 0\\
            0 & \Cdots & 0 & J_{\lambda_b}
        \end{pNiceMatrix}, 
        \text{ where }
        J_{\lambda_k} = \begin{pNiceMatrix}[
            first-row,last-col,nullify-dots
        ]
            & \Ldots[line-style={solid,<->},shorten=0pt]^{\mu_k \text{ columns}} \\
            \lambda_k & 1      & 0      & \Cdots & 0      & \\
            0         & \Ddots & \Ddots & \Ddots & \Vdots & \\ 
            \Vdots    & \Ddots & \Ddots & \Ddots & 0      &
            \phantom{\hskip10pt}\Vdots[line-style={solid,<->},shorten=0pt]^{\mu_k \text{ rows}} \\
            \Vdots    &        & \Ddots & \Ddots & 1      & \\
            0         & \Cdots & \Cdots & 0      & \lambda_k & 
        \end{pNiceMatrix},
        \ 1\leq k\leq b.
    \end{equation*}
    The complex values $\lambda_1,\ldots,\lambda_b$ are the eigenvalues of $\matG$ which are not necessarily distinct. The sequel of matrices $(\matG^n)_{n\in\mathbb{N}}$ is then bounded if, and only if, we have first $\vert\lambda_k\vert\leq1$, for $1\leq k\leq b$, and second $\mu_k=1$ if $\vert\lambda_k\vert=1$. This condition is therefore exactly that the minimal polynomial of $\matG$ is a simple von Neumann polynomial.
\end{proof}

A sufficient condition, that is indeed useful in most of the configurations considered in the paper, comes from \cite[Proposition 3 and 4]{bellotti2025stability}.

\begin{proposition}\label{thm:sufficientcond}
    If the characteristic polynomial of $\matG$ is a simple von Neumann polynomial for any $\xi\in[-\pi,\pi]$, then the lattice Boltzmann scheme characterized by the amplification matrix $\matG$ is stable.
\end{proposition}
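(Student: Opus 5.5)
The plan is to obtain a bound on $|\matG(\xi)^n|$ that is uniform in $\xi$ on the compact set $[-\pi,\pi]$. Stability fails exactly when, near some frequency, $\matG(\xi)$ has distinct eigenvalues of modulus close to one that coalesce, so that the Jordan-type growth of $\matG(\xi)^n$ is not controlled uniformly. The hypothesis that the characteristic polynomial $p_\xi$ lies in $\svNp$ for every $\xi$ is stronger than the minimal-polynomial condition of the previous proposition. It excludes even semisimple repeated eigenvalues on the unit circle, and this is what allows a uniform bound.

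\textbf{Step 1 (local uniformity).} Fix $\xi_0\in[-\pi,\pi]$. The eigenvalues of $\matG(\xi_0)$ on the unit circle are simple by hypothesis. I would separate the spectrum by small disjoint circles: one circle $\Gamma_k$ around each unimodular eigenvalue $\mu_k(\xi_0)$, and one circle of radius $\rho<1$ enclosing all eigenvalues of modulus $<1$. The entries of $\matG$ are trigonometric polynomials in $\xi$, hence analytic. For $\xi$ in a neighbourhood $U_{\xi_0}$, the Riesz projectors
\[
\Pi_k(\xi)=\frac{1}{2\pi\i}\oint_{\Gamma_k}(z-\matG(\xi))^{-1}\,\mathrm{d}z
\]
are then continuous and of rank one, and the remaining spectral projector $\Pi_0(\xi)$ is continuous as well. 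This yields the decomposition
\[
\matG(\xi)^n=\sum_k \mu_k(\xi)^n\Pi_k(\xi)+\matG(\xi)^n\Pi_0(\xi).
\]
For the rank-one pieces, the perturbed eigenvalue $\mu_k(\xi)$ is a simple root of $p_\xi$. Since $p_\xi\in\vNp$, it satisfies $|\mu_k(\xi)|\le1$, so $|\mu_k(\xi)^n\Pi_k(\xi)|\le \sup_{U_{\xi_0}}|\Pi_k|$. For the remaining piece, the restriction of $\matG(\xi)$ to the range of $\Pi_0(\xi)$ has spectral radius $\le\rho'<1$ uniformly on a slightly smaller neighbourhood. A Dunford-integral estimate on the circle of radius $\rho''\in(\rho',1)$ then gives $|\matG^n\Pi_0|\le C\rho''^{\,n}$, again uniformly in $\xi$.

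\textbf{Step 2 (compactness).} Cover $[-\pi,\pi]$ by finitely many of the neighbourhoods $U_{\xi_0}$ and take the maximum of the local constants. This gives the uniform constant $C$ required by Definition~\ref{thm:defstab}.

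The main obstacle is Step 1 at frequencies where an eigenvalue of modulus $<1$ approaches the unit circle. The hypothesis applied at $\xi$ near $\xi_0$ is what keeps the perturbed unimodular eigenvalues inside the closed disk. The circle-separation argument is what keeps the other eigenvalues uniformly inside. Continuity of the resolvent on the chosen contours needs them to avoid the spectrum, and this follows from continuity of the roots of $p_\xi$ with respect to $\xi$. This is essentially the argument of \cite[Propositions 3 and 4]{bellotti2025stability}, which I would cite for the details.
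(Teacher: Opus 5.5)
Your argument is correct. Near each $\xi_0$, the algebraic simplicity of the unimodular roots keeps the Riesz projectors $\Pi_k(\xi)$ continuous and of rank one, so $\matG(\xi)^n\Pi_k(\xi)=\mu_k(\xi)^n\Pi_k(\xi)$ with $|\mu_k(\xi)|\le 1$. The rest of the spectrum stays in a disc of radius $\rho<1$, and compactness of $[-\pi,\pi]$ concludes. One bookkeeping point on the $\Pi_0$ piece: either integrate the full resolvent on $|z|=\rho$ itself, so the contour avoids the discs bounded by the $\Gamma_k$, or integrate the resolvent of $\matG\Pi_0$, whose spectrum lies in $|z|\le\rho'$.

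The paper does not prove the statement itself; it defers to \cite[Propositions 3 and 4]{bellotti2025stability}. The mechanism suggested by Remark~\ref{rem:correspondingFD} differs from yours. By Cayley--Hamilton, every entry of $\matG(\xi)^n$ solves the three-step recurrence whose amplification polynomial is $\polyPt$. Its starting values $\matG^0,\matG^1,\matG^2$ are bounded uniformly in $\xi$. The uniform bound then follows from the classical fact that such a multistep recurrence is stable if and only if its amplification polynomial is simple von Neumann for every $\xi$ \cite[Chapter 4]{strikwerda_finite_2004}.

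That route is shorter and ties the statement to the corresponding finite difference scheme. It also shows at once why the condition is only sufficient: the lattice Boltzmann scheme can be stable while that finite difference scheme is not \cite{bellotti2024influence}. However, it delegates the delicate uniformity in $\xi$ to the finite-difference theorem.

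Your route establishes that uniformity directly, from continuity of $\xi\mapsto\matG(\xi)$ and compactness alone. It also shows why algebraic, not merely geometric, simplicity on the unit circle is needed. A rank-one spectral projector persists under perturbation. A semisimple double unimodular eigenvalue, by contrast, may split into nearby eigenvalues with nearly parallel eigenvectors, which is what can make the minimal-polynomial condition insufficient.
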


We hence see that gaps between necessary and sufficient conditions come from those between minimal and characteristic polynomials.

\begin{notation}[minimal and characteristic polynomials]
    The characteristic polynomial of the matrix $\matG$ defined in \eqref{eq:matG} is denoted by $\polyPt(X;\xi,c,\tun,\tdeux)$.
    Moreover, its minimal polynomial is denoted by $\polymini(X;\xi,c,\tun,\tdeux)$.
\end{notation}

As the minimal polynomial of a complex matrix is not easy to determine and its degree may vary with $\xi$, we propose to compute the characteristic polynomial $\polyPt$ of the amplification matrix $\matG$. Then, if $\polyPt\in\svNp$, 
the scheme is stable thanks to Prop.~\ref{thm:sufficientcond};
%the minimal polynomial $\polymini$ is also a simple von Neumann polynomial; 
if $\polyPt\notin\vNp$, the minimal polynomial $\polymini$ is not a von Neumann polynomial and the scheme is unstable; and if $\polyPt\in\vNp\backslash\svNp$, the case has to be investigated more precisely. Fortunately, the first two cases are generic and the last one is not.

The characteristic polynomial of the amplification matrix $\matG$ reads
\begin{equation}\label{eq:defP3}
\polyPt(X;\xi,c,\tun,\tdeux) = \alphatz + \alphatu  X + \alphatd  X^2 + \alphatt  X^3 ,
\end{equation}
with
\begin{align*}
    \alphatz &= -\tun \tdeux,\\
    \Re(\alphatu ) &=
    \tun \tdeux -(1-\cos\xi)\,\tun\,(1+\tdeux)\,c^2 - \cos\xi\,(\tun+\tdeux),
    \\
    \Im(\alphatu ) &= c\,\tdeux\,(1+\tun) \sin\xi,\\
    \Re(\alphatd ) &= 
    -1+c^2(1+\tdeux)(1 - \cos\xi) + \cos\xi\,(\tun+\tdeux),
    \\
    \Im(\alphatd ) &= c(1+\tun) \sin\xi,\\
    \alphatt &= 1,
\end{align*}
where $\Re(\cdot)$ and $\Im(\cdot)$ refer to the real and imaginary parts of a complex number.

\begin{remark}[Corresponding finite difference scheme]\label{rem:correspondingFD}
    As emphasized at the end of Sec. \ref{sec:consistency}, we look at $\mz_{j}^n$ as an approximation of the solution of the target PDE.
    One can therefore aim at recasting the discrete dynamics of $\mz_{j}^n$ only in terms of itself.
    This can be done in full generality using the characteristic polynomial $\polyPt$ from \eqref{eq:defP3}.
    This is a consequence of the Cayley-Hamilton theorem.
    More precisely, 
    \begin{multline}\label{eq:FD}
        \text{if} \quad (\mz_{j}^n)_{j\in\mathbb{Z}}^{n\in\mathbb{N}}
        \quad 
        \text{fulfills \eqref{eq:relaxation} and \eqref{eq:transport}, it also fulfills, for }\xi\in[-\pi, \pi]\\
        \begin{cases}
            \hat{\mz}^{1}(\xi) = (\matM\matT(\xi) \matM^{-1}\matR \, \hat{\vectm}^{0}(\xi))_1, \\
            \hat{\mz}^{2}(\xi) = ((\matM\matT(\xi) \matM^{-1}\matR)^2\, \hat{\vectm}^{0}(\xi))_1,\\
            \hat{\mz}^{n+1}(\xi) =  - \alphatd(\xi) \hat{\mz}^{n}(\xi) - \alphatu(\xi) \hat{\mz}^{n-1}(\xi) -\alphatz(\xi) \hat{\mz}^{n-2}(\xi), 
            \quad n\geq 2.
        \end{cases}            
    \end{multline}
    This means that $\mz_{j}^n$ also fulfills a multi-step finite difference scheme, whose ``amplification polynomial'' is $\polyPt$, with specific initialization schemes, linked to the choice of $\vectm^0$, that provide $\mz_{j}^1$ and $\mz_{j}^2$.
    The stability of the multi-step finite difference scheme, namely the fact that 
    \begin{equation*}
    \sum_{j\in\mathbb{Z}}\spacestep |z_j^n|^2\leq C^2 ( \sum_{j\in\mathbb{Z}}\spacestep |z_j^0|^2 + \sum_{j\in\mathbb{Z}}\spacestep |z_j^1|^2 + \sum_{j\in\mathbb{Z}}\spacestep |z_j^2|^2),
    % for all $n \geq 3$,
    \quad n\geq3,
    \end{equation*}
    where 
    \begin{equation*}
        \hat{z}^{n+1}(\xi) =  - \alphatd(\xi) \hat{z}^{n}(\xi) - \alphatu(\xi) \hat{z}^{n-1}(\xi) -\alphatz(\xi) \hat{z}^{n-2}(\xi),
        \quad n\geq 2,
    \end{equation*}
    is easy to ascertain.
    % By the results of \cite[Chapter 4]{strikwerda_finite_2004}

    Contrarily to the lattice Boltzmann setting given in Prop.~\ref{thm:sufficientcond}, the stability of the multi-step finite difference scheme is equivalent to $\polyPt(X;\xi,c,\tun,\tdeux)\in\svNp$ for all $\xi\in[-\pi, \pi]$ \cite[Chapter 4]{strikwerda_finite_2004}.
    Note that the initial data $z_j^0$, $z_j^1$, and $z_j^2$ are arbitrary.
    % We also warn the reader that the fact of having $\polyPt(X;\xi,c,\tun,\tdeux)\in\svNp$ for all $\xi\in[-\pi, \pi]$ is---contrarily to the setting of finite differences above---only sufficient to have a stable lattice Boltzmann scheme according to Definition \ref{thm:defstab}, cf. Proposition \ref{thm:sufficientcond}.
    In particular, a counter-example is provided in \cite{bellotti2024influence}, where the lattice Boltzmann scheme is stable, whereas the multi-step finite difference scheme that is obtained using the characteristic polynomial is not.
\end{remark}

%%%%%%%%%%%%%%%%%%%%%%%%%%%%%%%%%%%%%%%%%%%%%%%%%%
% SUBSECTION STABILITY CONDITION
%%%%%%%%%%%%%%%%%%%%%%%%%%%%%%%%%%%%%%%%%%%%%%%%%%
\subsection{Main result: the stability condition}

In this section, the stability condition is investigated and a necessary and sufficient condition is given.

The first result is useful to reduce the complexity of the proof. The parameters that ensure the scheme stability have to be searched such that conditions $-1\leq\tun,\tdeux\leq1$ and $\vert c\vert \leq1$ are satisfied (the case $\tun=\tdeux=-1$ being fully treated).

\begin{proposition}\label{thm:cond_sc}
    Assuming that the scheme is stable in the sense of Def.~\ref{thm:defstab}, first the centered relaxation parameters verify
    \begin{equation*}%\tag{$\textsf{C}_s$}\label{eq:cond_square}
        \vert\tun\vert\leq1, 
        \qquad\text{and}\qquad
        \vert\tdeux\vert\leq1 ;
    \end{equation*}
    second the equilibrium value $c$ satisfies
    \begin{equation*}%\tag{$\textsf{C}_c$}\label{eq:cond_c}
        \vert c\vert \leq 1 
        \qquad\text{if}\qquad 
        (\tun,\tdeux) \neq (-1,-1).
    \end{equation*}
    Moreover, if $(\tun,\tdeux)=(-1,-1)$, the scheme is stable.
\end{proposition}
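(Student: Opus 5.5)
The plan is to get necessary conditions by evaluating the characteristic polynomial $\polyPt$ at the two special frequencies $\xi=0$ and $\xi=\pi$, where it factors explicitly, and to treat $(\tun,\tdeux)=(-1,-1)$ directly.

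\textbf{Bounds on $\tun,\tdeux$.} By the necessary condition proved above (minimal polynomial of $\matG$ simple von Neumann for every $\xi$), stability forces every eigenvalue of $\matG(\xi)$ to have modulus at most one. Every root of $\polyPt$ is an eigenvalue, so it suffices that some $\polyPt(\cdot;\xi)$ has $-\tun$ and $-\tdeux$ as roots. At $\xi=0$ we have $\sin\xi=0$, $\cos\xi=1$, and the coefficients in \eqref{eq:defP3} reduce to $\alphatz=-\tun\tdeux$, $\alphatu=\tun\tdeux-(\tun+\tdeux)$, $\alphatd=\tun+\tdeux-1$. Expanding the product below shows
\begin{equation*}
\polyPt(X;0,c,\tun,\tdeux)=(X-1)(X+\tun)(X+\tdeux).
\end{equation*}
Hence $\lvert\tun\rvert\le1$ and $\lvert\tdeux\rvert\le1$.

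\textbf{Bound on $c$.} At $\xi=\pi$ all coefficients are real, with $\alphatu=\tun\tdeux+\tun+\tdeux-2\tun(1+\tdeux)c^2$ and $\alphatd=-1+2c^2(1+\tdeux)-(\tun+\tdeux)$. A direct computation gives
\begin{equation*}
\polyPt(-1;\pi)=2(1+\tun)(1+\tdeux)(c^2-1).
\end{equation*}
\begin{itemize}
\item Generic case, $\tun\neq-1$ and $\tdeux\neq-1$. Since $\lvert\tun\rvert,\lvert\tdeux\rvert\le1$, the product $(1+\tun)(1+\tdeux)$ is positive. If $\lvert c\rvert>1$, then $\polyPt(-1;\pi)>0$. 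The polynomial is monic cubic, so it tends to $-\infty$ as $X\to-\infty$, and the intermediate value theorem gives a real root below $-1$. This contradicts stability, so $\lvert c\rvert\le1$.
\item Boundary case $\tun=-1$, $\tdeux\neq-1$. I expect this to be the main obstacle, because the sign argument degenerates. The coefficients become $\alphatz=\tdeux$, $\alphatd=2c^2(1+\tdeux)-\tdeux$ and $\alphatu=2c^2(1+\tdeux)-1$. Then $X=-1$ is always a root, and
\begin{equation*}
\polyPt(X;\pi)=(X+1)\bigl(X^2+(1+\tdeux)(2c^2-1)X+\tdeux\bigr).
\end{equation*}
For a real quadratic $X^2+bX+e$ with $\lvert e\rvert\le1$, all roots lie in the closed unit disk iff $\lvert b\rvert\le 1+e$. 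Applied here, with $1+\tdeux>0$, this gives $\lvert 2c^2-1\rvert\le1$, that is $\lvert c\rvert\le1$.
\item Boundary case $\tdeux=-1$, $\tun\neq-1$. I would handle it by the same factorisation at $\xi=\pi$: $X=-1$ is again a root. If the remaining quadratic turns out to be independent of $c$, I would instead take $\xi$ near $\pi$, or apply the Cohn--Schur step of Theorem~\ref{thm:strikwerda437} to the cubic, to extract the constraint on $c$.
\end{itemize}

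\textbf{Stability when $(\tun,\tdeux)=(-1,-1)$.} Here $\sun=\sdeux=0$, so \eqref{eq:relaxation} is the identity and $\matR$ is the identity matrix. Hence $\matG(\xi)=\matT(\xi)$, which is diagonal and unitary for every $\xi$. Therefore $\lvert\matG(\xi)^n\rvert=1$ for all $n$ and $\xi$, and Definition~\ref{thm:defstab} holds with $C=1$, whatever the value of $c$.
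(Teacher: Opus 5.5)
Your arguments for $|\tun|,|\tdeux|\le1$ (at $\xi=0$), for $|c|\le1$ when $\tdeux>-1$ (your generic case plus the subcase $\tun=-1$), and for stability at $(\tun,\tdeux)=(-1,-1)$ are correct and follow the paper. The paper only avoids your split on $\tun$: it evaluates the quadratic factor $X^2+(2c^2-1)(1+\tdeux)X+\tdeux$ at $X=-1$, where it equals $2(1-c^2)(1+\tdeux)$.

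The genuine gap is the case $\tdeux=-1$, $\tun\in(-1,1]$, which you leave as an unexecuted plan. At $\xi=\pi$ one gets $\polyPt=(X-\tun)(X^2-1)$, so indeed nothing depends on $c$. Your first fallback, taking $\xi$ near $\pi$, cannot work uniformly. For $\tdeux=-1$ and every $\xi$,
\[
\polyPt(X;\xi,c,\tun,-1)=(X-1)\bigl(X^2+\bigl(-(1-\tun)\cos\xi+ic(1+\tun)\sin\xi\bigr)X-\tun\bigr).
\]
At $\tun=1$ the quadratic factor is $X^2+2ic\sin\xi\,X-1$, with roots $-ic\sin\xi\pm\sqrt{1-c^2\sin^2\xi}$. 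These have modulus exactly one whenever $|c\sin\xi|\le1$, hence for all $\xi$ close to $\pi$, whatever the value of $c$. For $|\tun|<1$ the near-$\pi$ route does work, but only at second order in $\pi-\xi$, because the root near $-1$ first moves tangentially to the unit circle. So, as written, $|c|\le1$ is established neither on the edge $\tdeux=-1$ nor at the vertex $(1,-1)$.

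The missing idea is to use a frequency where $|\sin\xi|$ is not small. The paper takes $\xi=\pi/2$, where
\[
\polyPt(X;\pi/2,c,\tun,-1)=(X-1)\bigl(X^2+ic(1+\tun)X-\tun\bigr).
\]
Substituting $X=iY$ turns the quadratic factor into $-\bigl(Y^2+c(1+\tun)Y+\tun\bigr)$. This is a real quadratic whose roots have the same moduli as those of the original factor. Your own criterion $|b|\le 1+e$ therefore applies and yields $|c|(1+\tun)\le1+\tun$, i.e.\ $|c|\le1$ since $\tun>-1$. The paper reaches the same conclusion from the concavity of $-Y^2-c(1+\tun)Y-\tun$ and its positive value $(1+\tun)(|c|-1)$ at $Y=-1$ if $c>1$, resp.\ at $Y=1$ if $c<-1$. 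Adding this case completes your proof.
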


\begin{proof}
    First, for $\xi=0$, we have
    \begin{equation*}
        % P3 = -tvtw + X(tvtw-tv-tw) + X^2(-1+tv+tw) + X^3
        %    = (X-1)(X+tv)(X+tw)
        %\restriction{\polyPt}{\xi=0}(X) = (X-1)(X+\tun)(X+\tdeux).
        \polyPt(X;\xi=0,c,\tun,\tdeux) = (X-1)(X+\tun)(X+\tdeux).
    \end{equation*}
    If the scheme is stable, the polynomial $\polyPt$ has to be a von Neumann polynomial in particular for $\xi=0$: the condition 
    %\eqref{eq:cond_square} 
    $-1\leq\tun,\tdeux\leq1$
    is then immediately satisfied.

    To prove that the second condition is also a necessary condition for the stability, the characteristic polynomial $\polyPt$ is studied for $\xi=\pi$ and $\xi=\pi/2$. First we assume that $\tdeux\in(-1,1]$:
\begin{equation*}
    % P3 = -tvtw + X(tvtw-2tv(1+tw)c^2+tv+tw)
    %      + X^2(-1+2c^2(1+tw)-(tv+tw)) + X^3
    % P3(tv) = -tvtw + tv^2tw + tv^2 + tvtw - 2tv^2(1+tw)c^2
    %          - tv^2 - tv^3 - tv^2tw + 2tv^2(1+tw)c^2 + tv^3
    %        = 0
    % P3 = (X-tv)(X^2+aX+tw)
    % avec -tv+a = -1+2c^2(1+tw)-(tv+tw)
    % => a = (1+tw)(2c^2-1)
%\restriction{\polyPt}{\xi=\pi}(X) 
\polyPt(X;\xi=\pi,c,\tun,\tdeux)
= (X-\tun) (X^2 + X(2c^2-1)(\tdeux+1)+\tdeux).
\end{equation*}
%First, as $\tun$ is a root, we obtain directly that $\vert\tun\vert\leq1$ is a necessary condition for $\polyPt$ to be a von Neumann polynomial.
%Second, as $\tdeux$ is the product of the two other roots, we also obtain that $\vert\tdeux\vert\leq1$ is a necessary condition for $\polyPt$ to be a von Neumann polynomial.
Assuming then that $\vert c\vert > 1$, we note $Q_3^\pi=X^2 + X(2c^2-1)(\tdeux+1)+\tdeux$.
This polynomial has two real roots as its discriminant $\Delta$ is positive:
\begin{equation*}
    \Delta = (2c^2-1)^2(\tdeux+1)^2-4\tdeux
    = 4c^2(c^2-1)(1+\tdeux)^2 + (1-\tdeux)^2 > 0.
\end{equation*}
As $Q_3^\pi$ is convex and $Q_3^\pi(-1)=2(1-c^2)(1+\tdeux) <0$, one of its roots is less than $-1$.
Second, we assume that $\tdeux=-1$ and $\tun\in(-1,1]$:
\begin{equation*}
    % P3 = tv + X (-tv - ic(1+tv)) 
    %         + X^2(-1+ic(1+tv)) + X^3
    % P3(1) = tv - tv - ic(1+tv) - 1 +ic(1+tv) + 1
    % P3 = (X-1)(X^2-aX-tv)
    % avec a = tv - tv - ic(1+tv) = -ic (1+tv)
    %\restriction{\polyPt}{\xi=\pi/2}(X) 
    \polyPt(X;\xi=\pi/2,c,\tun,-1)
    = 
    (X-1) (X^2 + ic(1+\tun)X -\tun).
\end{equation*}
% Delta = -c^2(1+tv)^2 + 4 tv
%       = -(c^2-1)(1+tv)^2 - (1-tv)^2 < 0
% les racines sont donc purement imaginaires
% P3(iY) = -Y^2-c(1+tv)Y-tv
% P3(1) = -1-c(1+tv)-tv < 0
% P3(-1) = -1+c(1+tv)-tv = (1+tv)(c-1) > 0
% et donc une racine est strictement inférieure à -1...
Assuming then that $\vert c\vert > 1$, we note $Q_3^{\pi/2}=X^2+ic(1+\tun)X-\tun$.
This polynomial has two purely imaginary roots as its discriminant $\Delta$ is negative:
\begin{equation*}
    \Delta = -c^2(1+\tun)^2+4\tun = -(c^2-1)(1+\tun)^2 - (1-\tun)^2 < 0.
\end{equation*}
We define $\tilde Q_3^{\pi/2}(X) = Q_3^{\pi/2}(iX) = -X^2-c(1+\tun)X-\tun$, $\tilde Q_3^{\pi/2}$ is concave. 
If $c>1$, $\tilde Q_3^{\pi/2}(-1)=(1+\tun)(c-1)>0$, one of its roots is less than $-1$.
If $c<-1$, $\tilde Q_3^{\pi/2}(1)=(1+\tun)(-c-1)>0$, one of its roots is greater than $1$.
That ends the proof.

The case $\tun=-1$ and $\tdeux=-1$ is fully degenerated as the matrix $\matG$ reads
\begin{equation*}
    \matG = \begin{pNiceMatrix}
        1&0&0\\
        0&e^{-i\xi}&0\\
        0&0&e^{i\xi}
    \end{pNiceMatrix}
\end{equation*}
and its minimal polynomial is clearly a simple von Neumann polynomial for any $\xi\in[-\pi,\pi]$ and for any $c\in\mathbb{R}$.
\end{proof}

% tv=tw=-1
% P3 = -1 + X(1+2cosxi) + X^2(-1-2cosxi) + X^3
%    = (X-1)(X^2+X+1- (1+2cosxi)X)
%    = (X-1) (X^2 - 2cosxiX + 1)
% Delta' =  cosxi^2-1 < 0
% Xpm = cosxi pm i sinxi de module 1 !
%
%     ( 1       0          0     )
% G = ( 0  exp(-i xi)      0     )
%     ( 0       0      exp(i xi) )
% donc le polynôme minimal de G est simple von Neumann quel que soit xi !

The following theorem is the main result of the contribution and gives the necessary and sufficient condition for the D1Q3 scheme to be stable. In order to be more readble for regular users of lattice Boltzmann scheme written in the MRT formalism, the condition is written according to the relaxation parameters $\sun,\sdeux$ and not the centered ones $\tun,\tdeux$. 

\begin{theorem}\label{thm:main}
    %{\afaire ADD THE CASE $\vert c\vert > 1$}
    %\averifier
The $D1Q3$ scheme is stable if, and only if the relaxation parameters verify the following properties according to the value of the velocity $c$:
\begin{itemize}
    \item $0\leq\sun,\sdeux\leq2$ and $(\sun,\sdeux)\notin\lbrace(0,2),(2,2)\rbrace$ if $c=0$;
    \item $0\leq\sun<2$ and $0\leq\sdeux\leq\sdeux^\star(\sun)$ if $0<\vert c\vert<1$;
    \item $0\leq\sun,\sdeux\leq2$ and $(\sun,\sdeux)\notin\lbrace(0,2),(2,0),(2,2)\rbrace$ if $\vert c\vert=1$;
    \item $\sun=\sdeux=0$ if $\vert c\vert > 1$.
\end{itemize}
The limit value $\sdeux^\star:[0,2]\to[0,2]$ reads as a bijective decreasing function given by
\begin{equation}
    \label{eq:defsstar}
    \sdeux^\star(s) = 
    1 + \frac{s^2-8s+8}{
        \sqrt{(s^2-8s+8)^2+4s^2(2-s)^2} + 2s(2-s)
    }.
\end{equation}
\end{theorem}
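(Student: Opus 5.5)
By Proposition~\ref{thm:cond_sc}, we may restrict to $|\tun|,|\tdeux|\le 1$ and $|c|\le 1$. The case $\tun=\tdeux=-1$, that is $\sun=\sdeux=0$, is stable for every $c$, and this already settles the item $|c|>1$. We further use the symmetry $\xi\mapsto-\xi$, which conjugates the coefficients of $\polyPt$, together with $c\mapsto -c$. This lets us take $c\ge 0$ and $\xi\in[0,\pi]$.

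The core of the argument is the Cohn--Schur reduction applied to $\polyPt$ for fixed $\xi$.

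First, $\polyPt^\star(0)=1$ and $\polyPt(0)=-\tun\tdeux$. Hence $\polyPt\in\ensinegalite$ exactly when $|\tun\tdeux|<1$. Generically we compute $\cohnschur(\polyPt)$, a degree-two polynomial $\beta_2X^2+\beta_1X+\beta_0$ with
\[
\beta_2=1-\tun^2\tdeux^2,\qquad \beta_1=\alphatd+\tun\tdeux\,\overline{\alphatu},\qquad \beta_0=\alphatu+\tun\tdeux\,\overline{\alphatd}.
\]
Applying the test once more gives $\cohnschur^2(\polyPt)=|\beta_2|^2-|\beta_0|^2$, a real function of $\xi$. The stability requirement then reduces to two conditions: $|\beta_0|<|\beta_2|$ for all $\xi$, and the degenerate equality cases treated through Theorem~\ref{thm:strikwerda432}(b).

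The plan is to write $|\beta_2|^2-|\beta_0|^2$ as a polynomial in $y=1-\cos\xi\in[0,2]$, using $\sin^2\xi=y(2-y)$. Evaluating at $\xi=0$ (double-checked by the factorization $(X-1)(X+\tun)(X+\tdeux)$) should show that this quantity vanishes at $y=0$, so we factor out $y$. We expect the remainder to be a polynomial $q(y)$ of low degree (one or two) in $y$, whose coefficients involve $c^2$, $\tun$, and $\tdeux$. Stability then amounts to $q(y)\ge 0$ on $[0,2]$, with strict positivity on $(0,2]$ up to the equality cases.

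Next we check the endpoints. The value $y=2$ ($\xi=\pi$) is handled by the factorization $(X-\tun)(X^2+(2c^2-1)(\tdeux+1)X+\tdeux)$. The value $y=0$ gives a condition of the form $\sun<2$ whenever $c\ne0,\pm1$, coming from $\partial_y$ at $0$; this matches the positive-viscosity requirement $\bigl(\tfrac1{\sun}-\tfrac12\bigr)>0$ from~\eqref{eq:modEq2nd}. When $q$ is quadratic and concave or convex in $y$, the binding constraint is either an endpoint or an interior vertex, and we take the minimum over $c^2\in(0,1)$. We expect the worst case to occur at an interior extremum. Solving the resulting quadratic equation in $\tdeux$ for given $\tun$ (equivalently in $\sdeux$ for given $\sun$) should produce the root with the square root $\sqrt{(s^2-8s+8)^2+4s^2(2-s)^2}$, which is exactly $\sdeux^\star$. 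We then verify, by monotonicity in $s$, that $\sdeux^\star$ is a decreasing bijection of $[0,2]$, and that the condition is uniform in $c$ for $0<|c|<1$.

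The main obstacle is the boundary and degenerate analysis. This covers parameters with $|\tun\tdeux|=1$, where $\polyPt\in\ensegalite$ or the root $\tun=\pm1$ lies on the circle, and frequencies where $q$ vanishes. In these situations the characteristic polynomial has multiple unit roots, and one must decide between Proposition~\ref{thm:sufficientcond} and the minimal-polynomial criterion.

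For the special values $c=0$ and $|c|=1$, the matrix $\matG$ simplifies and $\polyPt$ factors. For $c=0$ we get a factorization of the form $(X\pm\dots)$ times a quadratic, and for $|c|=1$ one velocity decouples. We handle these by direct factorization. At the excluded pairs, such as $(2,2)$, $(0,2)$ and $(2,0)$, we exhibit a $\xi$ at which a double unit eigenvalue carries a nontrivial Jordan block, by checking $\operatorname{rank}(\matG-\mu I)$. This yields instability. At the included boundary points we show either that the roots are simple or that $\matG$ is diagonalizable, which gives stability.
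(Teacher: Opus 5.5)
Your Cohn--Schur reduction stops one level too early, and this breaks the interior analysis. For the quadratic $\cohnschur(\polyPt)=\beta_2X^2+\beta_1X+\beta_0$, the next iterate is not the scalar $|\beta_2|^2-|\beta_0|^2$. It is the linear polynomial $\cohnschur(\cohnschur(\polyPt))=(\overline{\beta_2}\beta_1-\beta_0\overline{\beta_1})+(|\beta_2|^2-|\beta_0|^2)X$. The quantity $|\beta_2|^2-|\beta_0|^2$ only tests whether $\cohnschur(\polyPt)\in\ensinegalite$, and on the open square it is strictly positive for every $(c,\xi)$ (Lemma~\ref{thm:P2csI}). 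For instance, at $\xi=0$ it equals $(1-\tun\tdeux)^2(1-\tun^2)(1-\tdeux^2)>0$. So your claim that it vanishes at $y=0$ is false, and no analysis of this quantity can produce the curve $\sdeux^\star$. Your two conditions would in fact declare all of $(0,2)^2$ stable, including $\sun=\sdeux=3/2$ with $c=1/2$, which is unstable. The missing, decisive condition is that the linear polynomial $\cohnschur(\cohnschur(\polyPt))$ be von Neumann, i.e. $\Pi:=(|\beta_2|^2-|\beta_0|^2)^2-|\overline{\beta_2}\beta_1-\beta_0\overline{\beta_1}|^2\geq0$. This is the quantity that vanishes at $\xi=0$, where $\cohnschur(\cohnschur(\polyPt))$ is a positive multiple of $X-1$. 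It vanishes to second order: the paper factors $\Pi=c^2(1-c^2)(1+\tdeux)(1-\tun^2\tdeux^2)(1-\cos\xi)^2\,Q$, with $Q$ quadratic in $\cos\xi$.

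Your predicted location of the binding constraint is also wrong. With $d=c^2$ the $\partial_{xx}$ coefficient in \eqref{eq:modEq2nd} vanishes identically, so the viscosity heuristic giving $\sun<2$ from a first-order term in $y$ says nothing. The low-frequency condition appears at order $\xi^4$ and reads $Q|_{\xi=0}=(1-\tdeux)(1-\tun^2)(1-\tun\tdeux)^2\,\Upsilon(\tun,\tdeux)\geq0$. This does not depend on $c$ and is equivalent to $\tdeux\leq\tdeux^\star(\tun)$ (Proposition~\ref{thm:techlemtstar}). The worst case therefore sits at the endpoint $y=0$, not at an interior extremum, and no minimization over $c^2$ is involved.

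The real work is the converse: when $\Upsilon\geq0$, show $Q\geq0$ for all $\xi\in[0,\pi]$ and $c\in[0,1]$. The paper checks $Q>0$ at $\xi=0$ and $\xi=\pi$. It then proves that the discriminant of $Q$ in $\cos\xi$ equals $\Delta_1^2\Delta_2\geq0$, using chord and tangent bounds in $c^2$ and a sign study of an auxiliary quadratic $\Lambda$. By continuity from $c=0$, the roots in $\cos\xi$ therefore never enter $[-1,1]$.

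There is also a smaller gap at the vertex $(\sun,\sdeux)=(0,2)$, which the theorem declares stable for $0<|c|<1$. There $\polyPt$ has the double root $1$ at $\xi=0$, so Proposition~\ref{thm:sufficientcond} does not apply. Diagonalizability of $\matG(\xi)$ for each fixed $\xi$ is only the necessary condition; you need a power bound uniform in $\xi$. The paper obtains it from an explicit eigenbasis with $|\det\matP|\geq 8c(1-c^2)^{3/2}$.
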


\begin{figure}
    \includegraphics[width=\textwidth]{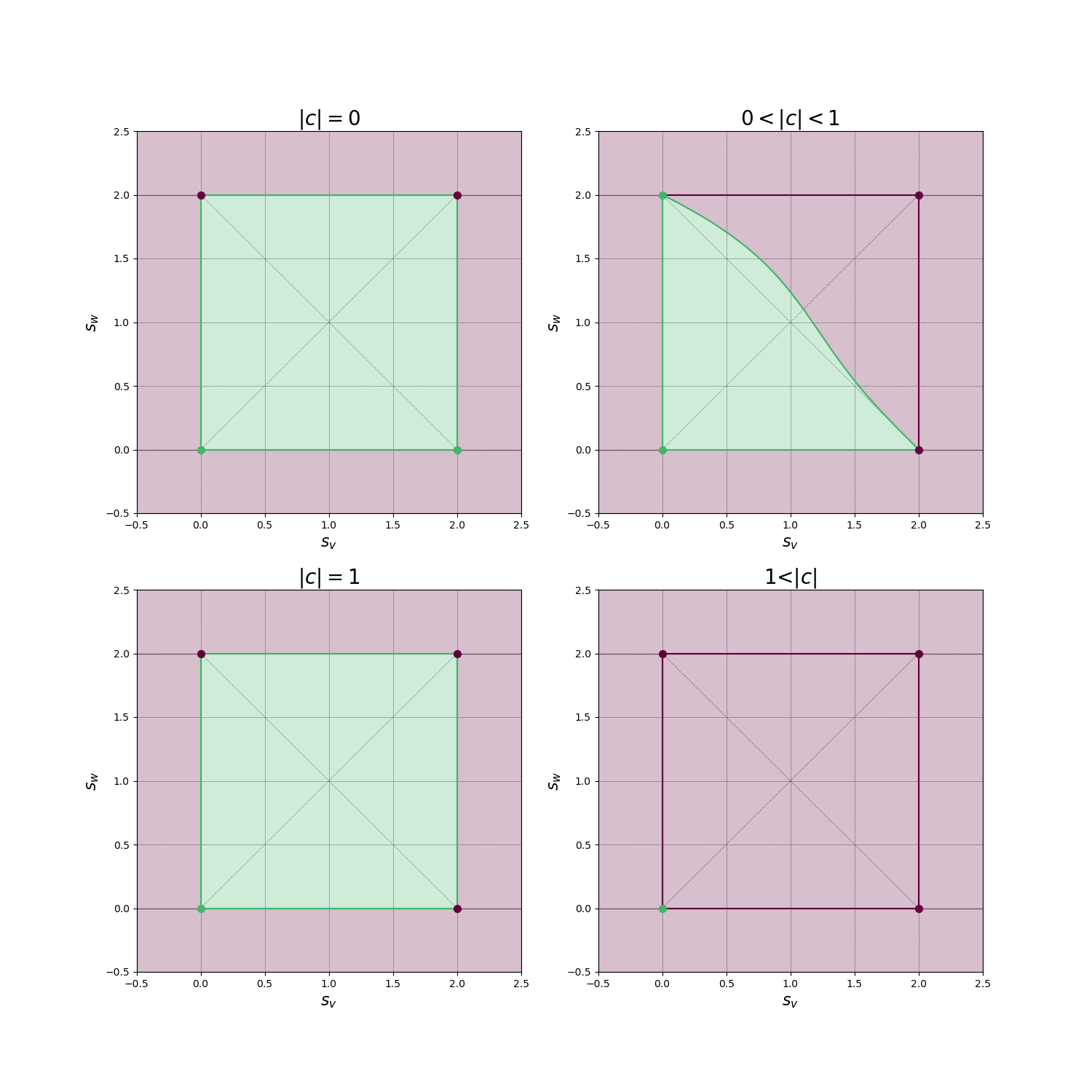}
    \caption{Representation of the stability condition of Thm.~\ref{thm:main}: green means stable while purple means unstable}
    \label{fig:condtot}
\end{figure}

Fig.~\ref{fig:condtot} shows the stable area according to the relaxation parameters $\sun$ and $\sdeux$. This area does not depend on the velocity $c$ as long as it satisfies $0<\vert c\vert<1$. 

\begin{proof}[Proof of Thm.~\ref{thm:main}]
    The proof simply consists of applying,
    \begin{itemize}
        \item Prop.~\ref{thm:stabvertexlb} for $\sun=0$ and $\sdeux=0$, Prop.~\ref{thm:stabvertexrb} for $\sun=2$ and $\sdeux=0$, Prop.~\ref{thm:stabvertexlt} for $\sun=0$ and $\sdeux=2$, Prop.~\ref{thm:stabvertexrt} for $\sun=2$ and $\sdeux=2$;
        \item Prop.~\ref{thm:stabedgeb} for $0<\sun<2$ and $\sdeux=0$, Prop.~\ref{thm:stabedgel} for $\sun=0$ and $0<\sdeux<2$, Prop.~\ref{thm:stabedget} for $0<\sun<2$ and $\sdeux=2$, Prop.~\ref{thm:stabedger} for $\sun=2$ and $0<\sdeux<2$;
        \item Prop.~\ref{thm:stabsquare} for $0<\sun<2$ and $0<\sdeux<2$;
    \end{itemize}
    keeping in mind that $\sun=\tun+1$ and $\sdeux=\tdeux+1$.
\end{proof}

\begin{corollary}\label{thm:corSRT}
    The single relaxation time scheme obtained by fixing $\sdeux=\sun$ is stable if the unique relaxation parameter $s=\sun=\sdeux$ satisfies
    \begin{itemize}
        \item $0 \leq s \leq s^\star = 6-2\sqrt{6}\gtrapprox 1.1010$, if $0<\vert c\vert < 1$ ;
        \item $0 \leq s < 2$, if $\vert c\vert\in\lbrace 0, 1\rbrace$.
    \end{itemize}
\end{corollary}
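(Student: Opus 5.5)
The corollary follows from Thm.~\ref{thm:main} by restricting it to the diagonal $\sdeux=\sun=s$ of the $(\sun,\sdeux)$ square. I will treat the values of $c$ separately, as the theorem does.

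\emph{Case $c=0$.} The theorem requires $0\leq s\leq 2$ and $(s,s)\notin\lbrace(0,2),(2,2)\rbrace$. Only the point $(2,2)$ lies on the diagonal, so the stable range is $0\leq s<2$.

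\emph{Case $\vert c\vert=1$.} The excluded points are $(0,2)$, $(2,0)$ and $(2,2)$. Again only $(2,2)$ meets the diagonal, which gives $0\leq s<2$.

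\emph{Case $0<\vert c\vert<1$.} Here stability holds if and only if $0\leq s<2$ and $0\leq s\leq\sdeux^\star(s)$. By Thm.~\ref{thm:main}, $\sdeux^\star$ is a decreasing bijection of $[0,2]$ onto $[0,2]$. Since $\sdeux^\star(0)=2$ and $\sdeux^\star(2)=0$, the function $s\mapsto s-\sdeux^\star(s)$ is strictly increasing and changes sign exactly once. So the stable set is $[0,s^\star]$, where $s^\star$ is the unique fixed point of $\sdeux^\star$. One also checks that $s^\star<2$, so the constraint $s<2$ adds nothing.

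To find $s^\star$, I set $A=s^2-8s+8$ and $B=2s(2-s)$. The fixed-point equation $s=1+A/(\sqrt{A^2+B^2}+B)$ becomes
\begin{equation*}
(s-1)\bigl(\sqrt{A^2+B^2}+B\bigr)=A .
\end{equation*}
I isolate the square root and square both sides. Using $(A-(s-1)B)^2-(s-1)^2(A^2+B^2)=A\bigl(A(1-(s-1)^2)-2(s-1)B\bigr)$, the squared equation reduces to $A=0$ or $A\,s(2-s)=4s(s-1)(2-s)$.
\begin{itemize}
\item Since $s\in(0,2)$, the second alternative is equivalent to $A=4(s-1)$, that is $s^2-12s+12=0$. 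Its root in $(0,2)$ is $s=6-2\sqrt6\approx1.1010$.
\item The alternative $A=0$ gives $s=4-2\sqrt2\approx1.17$. At that point $\sdeux^\star(s)=1\neq s$, so it is spurious.
\end{itemize}

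The main obstacle is keeping track of the spurious roots introduced by squaring. For the candidate $6-2\sqrt6$, I would verify directly that $A-(s-1)B\geq0$ there, so that squaring was legitimate. This numerical check confirms $s^\star=6-2\sqrt6$.

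Finally, since Thm.~\ref{thm:main} is an equivalence, these ranges are exactly the stable ones. In particular the corollary's ``if'' holds.
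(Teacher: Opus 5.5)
Your proof is correct and follows essentially the paper's route. You restrict Thm.~\ref{thm:main} to the diagonal, characterize $s^\star$ as the unique fixed point of the decreasing function $\sdeux^\star$, and solve the fixed-point equation by squaring and discarding the spurious roots.

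The differences are only cosmetic. The paper works in the centered variable $t=s-1$ and first rationalizes $\tdeux^\star$, so it obtains $t^2-10t+1=0$ directly, and its sign condition $t^3-8t^2+t+2\geq0$ plays the role of your legitimacy check. You, on the other hand, also spell out the cases $c=0$ and $\vert c\vert=1$, together with the monotonicity argument showing that the stable set is $[0,s^\star]$; the paper leaves both implicit.
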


\begin{proof}[Proof of Cor.~\ref{thm:corSRT}]
    The limit relaxation parameter $s^\star$ is defined as the unique solution of the fixed point problem $s=\sdeux^\star(s)$ on $[0,2]$ or equivalently $s^\star=t^\star+1$ with $t^\star$ the unique solution of $t=\tdeux^\star(t)$ on $[-1,1]$.
    As $t=3-2\sqrt{2}$ (the unique root of $t^2-6t+1$ in $[-1,1]$) is not a fixed point of $\tdeux^\star$, we have
    \begin{align*}
        t = \tdeux^\star(t) &\iff \left\lbrace
        \begin{aligned}
        &t(t^2-6t+1) = -2(1-t^2) + \sqrt{(t^2-6t+1)^2+4(1-t^2)^2}\\
        &\text{and }-1\leq t\leq 1
        \end{aligned} \right.\\
        &\iff \left\lbrace
        \begin{aligned}
        &(t(t^2-6t+1) + 2(1-t^2))^2 = (t^2-6t+1)^2+4(1-t^2)^2\\
        &\text{and } -1\leq t\leq 1 
        \text{ and }t(t^2-6t+1) + 2(1-t^2) \geq 0
        \end{aligned} \right. \\
        &\iff \left\lbrace
        \begin{aligned}
        &4(1-t^2)t(t^2-6t+1) = (1-t^2)(t^2-6t+1)^2\\
        &\text{and } -1\leq t\leq 1 
        \text{ and }t^3-8t^2+t+2 \geq 0.
        \end{aligned} \right. 
    \end{align*}
    As both $t=\pm1$ do not satisfy the inequality $t^3-8t^2+t+2 \geq 0$, we can simplify the first equality by $1-t^2$ and also by $t^2-6t+1$ (as $3-2\sqrt{2}$ is not a solution)
    that yields
    \begin{align*}
        t = \tdeux^\star(t) 
        &\iff \left(
        t^2-10t + 1 = 0 
        \text{ and }
        -1\leq t\leq 1
        \text{ and }
        t^3-8t^2+t+2 \geq 0
        \right).\\
        &\iff t = 5 - 2\sqrt{6}.
    \end{align*}
    That ends the proof.
\end{proof}

\begin{corollary}\label{thm:corIrina}
    % For $c\in[-1,1]$, the lattice Boltzmann scheme introduced in \cite{dhumieres_viscosity_2009} with the linked relaxation parameters $\sdeux = 2-\sun$ is stable if $0<\sun<2$.
    The lattice Boltzmann scheme introduced in \cite{dhumieres_viscosity_2009} with the linked relaxation parameters $\sdeux = 2-\sun$ is stable if
    \begin{itemize}
        \item $0\leq \sun < 2$ if $0<\vert c\vert <1$ ;
        \item $0<\sun<2$ if $c\in\lbrace 0,1\rbrace$.
    \end{itemize}
\end{corollary}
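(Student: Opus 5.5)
The plan is to read the corollary off Thm.~\ref{thm:main}, restricted to the line $\sdeux=2-\sun$, i.e. $\tdeux=-\tun$ in centered variables. The cases are treated separately.

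\emph{Case $\vert c\vert\in\lbrace 0,1\rbrace$.} The theorem requires $0\leq\sun,\sdeux\leq 2$, which on this line reads $0\leq\sun\leq 2$. It also excludes $(0,2)$ and $(2,2)$ when $c=0$, and additionally $(2,0)$ when $\vert c\vert=1$. The line meets these excluded points exactly at $(0,2)$ and $(2,0)$. Hence stability holds for $0<\sun<2$. (The statement writes $c\in\lbrace0,1\rbrace$; the case $c=-1$ is identical since the theorem only involves $\vert c\vert$.)

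\emph{Case $0<\vert c\vert<1$.} The theorem requires $0\leq\sun<2$ and $0\leq 2-\sun\leq\sdeux^\star(\sun)$. The first inequality on $\sdeux$ is automatic since $\sun\leq 2$. It therefore remains to prove
\begin{equation*}
\sdeux^\star(s)\geq 2-s\qquad\text{for all } s\in[0,2).
\end{equation*}
The endpoints are consistent with this: \eqref{eq:defsstar} gives $\sdeux^\star(0)=2$ and $\sdeux^\star(2)=0$, so the curve and the line share endpoints, and I must show that the curve lies above the chord.

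Set $t=s-1\in[-1,1)$ and write
\begin{equation*}
A=s^2-8s+8=t^2-6t+1,\qquad B=2s(2-s)=2(1-t^2)\geq 0,\qquad D=\sqrt{A^2+B^2}+B>0.
\end{equation*}
The inequality becomes $t+A/D\geq 0$, i.e. $tD+A\geq 0$. Expanding, this reads
\begin{equation*}
t\sqrt{A^2+B^2}\geq -(A+tB)=2t^3-t^2+4t-1.
\end{equation*}
I split on the sign of $t$.

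\emph{Subcase $t\geq0$.} If the right-hand side is negative, there is nothing to prove. Otherwise I square both sides. After simplification I expect the squared inequality to reduce to a polynomial inequality with a factor $(1-t^2)$ and a remaining low-degree polynomial of constant sign on $[0,1)$, analogous to the manipulations in the proof of Cor.~\ref{thm:corSRT}.

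\emph{Subcase $t<0$.} The left-hand side is $-\vert t\vert\sqrt{A^2+B^2}$. Here $A>0$ and the right-hand side $2t^3-t^2+4t-1$ is negative on $[-1,0)$. So the claim is $\vert t\vert\sqrt{A^2+B^2}\leq 1-4t+t^2-2t^3$, which I again square.

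The main obstacle is this squaring-and-factoring step. I would use the identity $(tD+A)(tD'-A)$ with $D'=\sqrt{A^2+B^2}-B$, so that $DD'=A^2$, to factor out $(1-t^2)$ cleanly. An alternative, if the algebra is awkward, is to rewrite $\sdeux^\star$ as $1+(\sqrt{A^2+B^2}-B)/A$ and compare the two sides directly by sign analysis. Since both the curve and the line pass through the endpoints, a factor $(1-t^2)$, possibly to a higher power, is expected in the difference.
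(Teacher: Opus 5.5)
Your proof is correct, and it takes a different route from the paper's. Your reduction is right. On the line $\sdeux=2-\sun$, the cases $c\in\{0,\pm1\}$ follow from the excluded vertices exactly as you say. For $0<|c|<1$, everything comes down to $tD+A\ge0$ on $[-1,1)$.

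The paper instead substitutes $x=2(1-t^2)/(t^2-6t+1)$ and uses that $x\mapsto\sqrt{1+x^2}+1-x$ is positive. However, the identity $t+\tdeux^\star(t)=\sqrt{1+x^2}+1-x$ it relies on does not hold as written. At $t=0$ the left side is $\sqrt5-2$ and the right side is $\sqrt5-1$. On the branch $t^2-6t+1>0$, the right side is $\sdeux^\star(s)$ itself, not $\sdeux^\star(s)-(2-s)$. So your explicit computation is what actually establishes the inequality.

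Carry the squaring through; the key identity is $A+4t=(1-t)^2$, which gives
\[
t^2(A^2+B^2)-(A+tB)^2=-(1-t^2)A^2-4t(1-t^2)A=-(1-t^2)(1-t)^2A .
\]
Your product $(tD+A)(tD'-A)$ produces the same right-hand side.

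For $t<0$ you have $A>0$, so the squared inequality holds and your argument closes.

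For $t\ge0$ your prediction needs correcting. The residual factor is $A$ itself, which changes sign at $t=3-2\sqrt2$, so it is not of constant sign on $[0,1)$. The argument still works because you only square when $2t^3-t^2+4t-1=-(A+tB)\ge0$. In that regime $A\le -tB\le 0$, which is exactly what the squared inequality requires.

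An even shorter proof is available with the paper's own tools. From $\Upsilon(t,s)=\kappa(t)(1-s^2)-2\eta(t)s$ you get $\Upsilon(t,-t)=(1-t^2)(t^2-6t+1)+4t(1-t^2)=(1-t)^3(1+t)\ge0$. Prop.~\ref{thm:techlemtstar} then gives $-t\le\tdeux^\star(t)$ for all $t\in[-1,1]$ at once.
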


\begin{proof}[Proof of Cor.~\ref{thm:corIrina}]
    Thanks to Thm.~\ref{thm:main}, we only have to prove that $2-s\leq \sdeux^\star(s)$ for $0\leq s\leq 2$ or equivalently that $t+\tdeux^\star(t) \geq 0$ for $-1\leq t\leq 1$. We first consider the obvious case $t=3-2\sqrt{2}>0$ with $\tdeux^\star(t)=0$.
    For $t\neq 3-2\sqrt{2}$, we rewrite $t + \tdeux^\star(t)$ as
    \begin{equation*}
        t + \tdeux^\star(t) = \sqrt{1+x^2} + 1-x
        \quad\text{with}\quad
        x = 2(1-t^2)/(t^2-6t+1). 
    \end{equation*}
    % with $x = 2(1-t^2)/(t^2-6t+1)$. 
    The function $x\mapsto \sqrt{1+x^2} + 1-x$ is clearly decreasing and positive on $\mathbb{R}$, what ends the proof.
\end{proof}
%%%%%%%%%%%%%%%%%%%%%%%%%%%%%%%%%%%%%%%%%%%%%%%%%%

%%%%%%%%%%%%%%%%%%%%%%%%%%%%%%%%%%%%%%%%%%%%%%%%%%
% SECTION PROOF OF STABILITY
%%%%%%%%%%%%%%%%%%%%%%%%%%%%%%%%%%%%%%%%%%%%%%%%%%
%%%%%%%%%%%%%%%%%%%%%%%%%%%%%%%%%%%%%%%%%%%%%%%%%%
% SECTION PROOF OF STABILITY
%%%%%%%%%%%%%%%%%%%%%%%%%%%%%%%%%%%%%%%%%%%%%%%%%%
\section{Proof of the stability condition}\label{sec:proofs}

This section is devoted to the study of the main theorem. We first restrict the domain in which the parameters $(\tun, \tdeux, c)$ lie thanks to symmetry properties and we introduce some notations to facilitate a complete proof. The computations were carried out with the help of SageMath \cite{sagemath}. 
The computer-assisted calculations accompanying this work are available at \cite{bellotti_2026_23390935}.

We recall that we have defined \( \polyPt \) as the characteristic polynomial of the amplification matrix \( \matG \) given in \eqref{eq:matGs}. The subscript 3 indicates that the degree of this polynomial is 3.
Then we note  \(\polyPdcs=\cohnschur(\polyPt)\) the polynomial obtained after one step of the Cohn-Schur operator and \(\polyPdde=\mathrm{d}\polyPt/\mathrm{d}X\) the polynomial obtained after the derivative operator. The subscript 2 indicates that the degree of these polynomials is 2. We continue this process by defining polynomials of degree 1
$\polyPucscs=\cohnschur(\polyPdcs)$,
$\polyPucsde=\mathrm{d}\polyPdcs/\mathrm{d}X$,
$\polyPudecs=\cohnschur(\polyPdde)$, and
$\polyPudede=\mathrm{d}\polyPdde/\mathrm{d}X$.
The expression of each of these polynomials is given in the appendix~\ref{sec:exprpoly}.

%%%%%%%%%%%%%%%%%%%%%%%%%%%%%%%%%%%%%%%%%%%%%%%%%%
% DECOMPOSITION OF THE DOMAIN
%%%%%%%%%%%%%%%%%%%%%%%%%%%%%%%%%%%%%%%%%%%%%%%%%%
\subsection{Decomposition of the studied domain}

Thanks to theorem~\ref{thm:cond_sc}, we assume in this section that 
\begin{equation*}
    \vert\tun\vert \leq 1,
    \quad
    \vert\tdeux\vert \leq 1,
    \quad
    \vert c\vert \leq 1.
\end{equation*}

The following property is usefull to reduce the complexity of the proof.

\begin{property}\label{thm:symprop}
    The matrix $\matG$ defined in \eqref{eq:matG} 
    depends on the $\tun$, $\tdeux$, $\xi$, and $c$ variables and satisfies the symmetry properties:
    \begin{equation*}
        \matG(\xi,-c,\tun,\tdeux) =
        \matP \overline{\matG(\xi,c,\tun,\tdeux)} \matP,
        \quad
        \matG(-\xi,c,\tun,\tdeux) =
        \overline{\matG(\xi,c,\tun,\tdeux)}
    \end{equation*}
    with
    \begin{equation*}
        \matP = \begin{pmatrix}
        1&0&0 \\ 0&0&1 \\ 0&1&0
        \end{pmatrix}
    \end{equation*}
    for $-\pi\leq\xi\leq\pi$, $-1\leq c\leq1$, and $-1\leq\tun,\tdeux\leq1$.
    The characteristic polynomial $\polyPt$ and the minimal polynomial $\polymini$ satisfy
    \begin{align*}
        \polyPt(X;\xi,-c,\tun,\tdeux) &= \overline{\polyPt(X;\xi,c,\tun,\tdeux)},
        &
        \polyPt(X;-\xi,c,\tun,\tdeux) &= \overline{\polyPt(X;\xi,c,\tun,\tdeux)} \\
        \polymini(X;\xi,-c,\tun,\tdeux) &= \overline{\polymini(X;\xi,c,\tun,\tdeux)},
        &
        \polymini(X;-\xi,c,\tun,\tdeux) &= \overline{\polymini(X;\xi,c,\tun,\tdeux)},
    \end{align*}
    for $-\pi\leq\xi\leq\pi$, $-1\leq c\leq1$, and $-1\leq\tun,\tdeux\leq1$.
\end{property}

We then investigate the stability condition assuming that the parameters lie in the hypercube
\begin{equation}\label{eq:condinit}
    -1 \leq \tun \leq 1,
    \quad
    -1 \leq \tdeux \leq 1,
    \quad
    0 \leq c \leq 1,
    \quad
    0 \leq \xi \leq \pi.
\end{equation}

In order to split the proof, we decompose the square domain
\begin{equation}\label{eq:domaintvtw}
    \squarett = [-1,1]{\times}[-1,1]
\end{equation}
where the centered relaxation parameters $\tun$ and $\tdeux$ are taken into its interior $\squarettinterior$, its open edges $\edgestt$, and its vertices $\verticestt$ (see Fig.~\ref{fig:fig:notations_stt}):
\begin{equation}\label{eq:domaintvtwdecomp}
    \squarett = \squarettinterior \cup \edgestt \cup \verticestt
\end{equation}
where
\begin{gather*}
    \squarettinterior = (-1,1){\times}(-1,1),\\
    \begin{aligned}
        \edgestt &= \edgestt^b \cup \edgestt^t \cup \edgestt^l \cup \edgestt^r,
        &
        \verticestt &= \verticestt^{lb} \cup \verticestt^{rb} \cup \verticestt^{lt} \cup \verticestt^{rt},
        \\ 
        \edgestt^b &=(-1,1){\times}\lbrace-1\rbrace,
        &
        \verticestt^{lb} &= \lbrace-1\rbrace{\times}\lbrace-1\rbrace,
        \\
        \edgestt^t &=(-1,1){\times}\lbrace+1\rbrace,
        &
        \verticestt^{rb} &= \lbrace+1\rbrace{\times}\lbrace-1\rbrace,
        \\
        \edgestt^l &=\lbrace-1\rbrace{\times}(-1,1),
        &
        \verticestt^{lt} &= \lbrace-1\rbrace{\times}\lbrace+1\rbrace,
        \\
        \edgestt^r &=\lbrace+1\rbrace{\times}(-1,1),
        &
        \verticestt^{rt} &= \lbrace+1\rbrace{\times}\lbrace+1\rbrace.
    \end{aligned}
\end{gather*}

\begin{figure}[htb]
    \includegraphics[width=.5\textwidth]{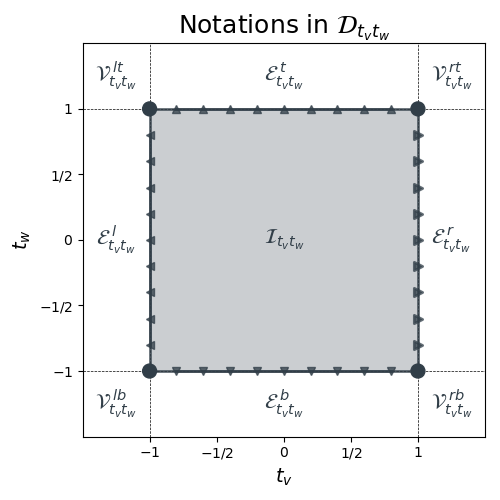}
    \caption{Decomposition of the square $\squarett = [-1,1]{\times}[-1,1]$.}
    \label{fig:fig:notations_stt}
\end{figure}

We also decompose the rectangular domain
\begin{equation}\label{eq:domaincxi}
    \squarecxi = [0,1]{\times}[0,\pi]
\end{equation}
where the velocity $c$ and the wave number $\xi$ are taken 
into its interior $\squarecxiinterior$, its open edges $\edgescxi$, and its vertices $\verticescxi$ (see Fig.~\ref{fig:notations_scxi}):
\begin{equation}\label{eq:domaincxidecomp}
    \squarecxi = \squarecxiinterior \cup \edgescxi \cup \verticescxi
\end{equation}
where
\begin{gather*}
    \squarecxiinterior = (0,1){\times}(0,\pi),\\
    \begin{aligned}
        \edgescxi &= \edgescxi^b \cup \edgescxi^t \cup \edgescxi^l \cup \edgescxi^r,
        &
        \verticescxi &= \verticescxi^{lb} \cup \verticescxi^{rb} \cup \verticescxi^{lt} \cup \verticescxi^{rt},
        \\ 
        \edgescxi^b &=(0,1){\times}\lbrace0\rbrace,
        &
        \verticescxi^{lb} &= \lbrace0\rbrace{\times}\lbrace0\rbrace,
        \\
        \edgescxi^t &=(0,1){\times}\lbrace\pi\rbrace,
        &
        \verticescxi^{rb} &= \lbrace1\rbrace{\times}\lbrace0\rbrace,
        \\
        \edgescxi^l &=\lbrace0\rbrace{\times}(0,\pi),
        &
        \verticescxi^{lt} &= \lbrace0\rbrace{\times}\lbrace\pi\rbrace,
        \\
        \edgescxi^r &=\lbrace1\rbrace{\times}(0,\pi),
        &
        \verticescxi^{rt} &= \lbrace1\rbrace{\times}\lbrace\pi\rbrace.
    \end{aligned}
\end{gather*}

\begin{figure}[htb]
    \includegraphics[width=.5\textwidth]{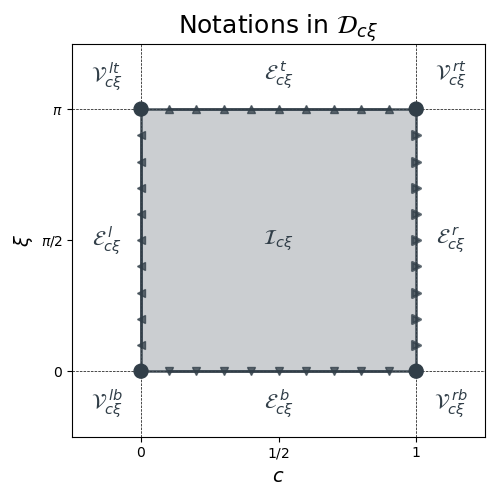}
    \caption{Decomposition of the square $\squarecxi = [0,1]{\times}[0,\pi]$.}
    \label{fig:notations_scxi}
\end{figure}

%%%%%%%%%%%%%%%%%%%%%%%%%%%%%%%%%%%%
% Technical lemmas
%%%%%%%%%%%%%%%%%%%%%%%%%%%%%%%%%%%%
\subsection{Technical lemmas}
%%%%%%%%%%%%%%%%%%%%%%%%%%%%%%%%%%%%

In this section, we prove some technical lemmas that are used in the sequel to investigate the stability area.
In the first two lemmas, we characterize the membership properties of the polynomial \(\polyPt\) in the sets \( \ensinegalite \) and \( \ensegalite \).

\begin{lemma}\label{thm:P3I}
    % For $\xi\in[-\pi,\pi]$ and $(c, \tun, \tdeux) \in[-1,1]^3$, we have
    For $(\tun, \tdeux)\in\squarett$ and $(c,\xi)\in\squarecxi$, we have
    \begin{equation*}
        \polyPt(\cdot;\xi,c,\tun,\tdeux)\in\ensinegalite
        \iff
        (\tun,\tdeux)\in\squarettinterior \cup \edgestt.
    \end{equation*}
\end{lemma}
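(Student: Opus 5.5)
The statement is a direct computation, because membership in $\ensinegalite$ only involves the constant and leading coefficients of $\polyPt$. From \eqref{eq:defP3} we have $\polyPt(0)=\alphatz=-\tun\tdeux$. Since $\alphatt=1$ and $\polyPt$ has exact degree $3$, the reciprocal polynomial $\polyPt^\star$ has constant term $\polyPt^\star(0)=\overline{\alphatt}=1$. The condition $\polyPt\in\ensinegalite$ is therefore equivalent to
\begin{equation*}
    \vert\tun\tdeux\vert<1 .
\end{equation*}
This condition does not depend on $(c,\xi)\in\squarecxi$. That is consistent with the statement, where the right-hand side involves only $(\tun,\tdeux)$.

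It remains to identify the set of $(\tun,\tdeux)\in\squarett=[-1,1]^2$ with $\vert\tun\vert\,\vert\tdeux\vert<1$. Both factors lie in $[0,1]$, so their product equals $1$ exactly when $\vert\tun\vert=\vert\tdeux\vert=1$, and is strictly less than $1$ otherwise. So the product equals $1$ precisely on the four vertices, which form $\verticestt$. Using the decomposition \eqref{eq:domaintvtwdecomp}, the complement of $\verticestt$ in $\squarett$ is $\squarettinterior\cup\edgestt$, which gives both implications at once.

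I do not expect a real obstacle. The only point requiring care is the definition of $\varphi^\star$: it is taken relative to the exact degree $3$, which is legitimate because the leading coefficient is $1$ for every parameter value. Hence $\polyPt^\star(0)=1$ holds uniformly in the parameters, and no degenerate degree drop has to be treated separately.
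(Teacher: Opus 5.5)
Your proposal is correct and matches the paper's proof: membership in $\ensinegalite$ reduces to $\vert\alphatz\vert<\vert\alphatt\vert$, i.e.\ $\vert\tun\tdeux\vert<1$, independently of $(c,\xi)$. You also spell out why, on $[-1,1]^2$, this fails exactly at the four vertices, so the admissible set is $\squarettinterior\cup\edgestt$; the paper leaves that step implicit.
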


\begin{proof}[Proof of lemma~\ref{thm:P3I}]
% The proof consists only in writing
 $\polyPt\in\ensinegalite$ if, and only if, $\vert\alphatz\vert<\vert\alphatt\vert$ that reads 
$\vert\tun\tdeux\vert<1$.
\end{proof}

\begin{lemma}\label{thm:P3E}
    For $(\tun, \tdeux)\in\squarett$ and $(c,\xi)\in\squarecxi$, we have
    \begin{equation*}
        \polyPt(\cdot;\xi,c,\tun,\tdeux)\in\ensegalite
        \iff
        (\tun,\tdeux)\in\verticestt.
    \end{equation*}
\end{lemma}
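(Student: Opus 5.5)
We must show that $\cohnschur(\polyPt)=0$ exactly when $(\tun,\tdeux)$ is a vertex. Write $\polyPt=\alphatz+\alphatu X+\alphatd X^2+X^3$. Then $\polyPt^\star=\bar\alphatz X^3+\bar\alphatu X^2+\bar\alphatd X+1$, with $\polyPt^\star(0)=1$ and $\polyPt(0)=\alphatz=-\tun\tdeux$, which is real. Hence
\begin{equation*}
    X\,\cohnschur(\polyPt) = \polyPt + \tun\tdeux\,\polyPt^\star .
\end{equation*}
Its coefficients are as follows.
\begin{itemize}
    \item Degree $3$: $1-\tun^2\tdeux^2$.
    \item Degree $2$: $\alphatd+\tun\tdeux\,\bar\alphatu$.
    \item Degree $1$: $\alphatu+\tun\tdeux\,\bar\alphatd$.
\end{itemize}
So $\polyPt\in\ensegalite$ if and only if these three quantities vanish.

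For the direct implication, the leading coefficient gives a necessary condition: $\cohnschur(\polyPt)=0$ forces $\tun^2\tdeux^2=1$. Since $(\tun,\tdeux)\in\squarett$, this means $\vert\tun\vert=\vert\tdeux\vert=1$, i.e. $(\tun,\tdeux)\in\verticestt$. Equivalently, by Lemma~\ref{thm:P3I} and the earlier remark that $\ensinegalite\cap\ensegalite=\emptyset$, any point outside the vertices lies in $\ensinegalite$, hence not in $\ensegalite$. This direction is immediate.

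For the converse, take $(\tun,\tdeux)$ a vertex, so $\tun\tdeux=\pm1$. It remains to check that the degree-one and degree-two coefficients vanish for all $(c,\xi)\in\squarecxi$. Note that the degree-one coefficient is $\tun\tdeux$ times the conjugate of the degree-two one when $\tun^2\tdeux^2=1$, so only one identity is needed, namely
\begin{equation*}
    \alphatu=-\tun\tdeux\,\bar\alphatd .
\end{equation*}
I will verify it from the explicit real and imaginary parts of $\alphatu,\alphatd$, vertex by vertex.
\begin{itemize}
    \item At $(-1,-1)$: $\alphatu=1+2\cos\xi$ and $\alphatd=-1-2\cos\xi$, consistent with the factorization $(X-1)(X^2-2\cos\xi X+1)$ computed earlier. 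The identity holds.
    \item At $(1,1)$: $\Re\alphatu=1-2(1-\cos\xi)c^2-2\cos\xi$ and $\Re\alphatd=-1+2c^2(1-\cos\xi)+2\cos\xi$, so $\Re\alphatu=-\Re\alphatd$. Also $\Im\alphatu=2c\sin\xi=\Im\alphatd$. Then $\alphatu=-\bar\alphatd$ holds.
    \item At $(1,-1)$ and $(-1,1)$: $\tun\tdeux=-1$, so we need $\alphatu=\bar\alphatd$. The factor $(1+\tdeux)$ or $(1+\tun)$ kills the $c$-dependent terms, and the remaining terms agree by direct substitution.
\end{itemize}

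The only real work is these sign checks at the mixed vertices, which are short substitutions. I expect no obstacle beyond carefully tracking the conjugation in $\polyPt^\star$.
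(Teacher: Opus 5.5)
Your proof is correct and follows essentially the paper's route: you get the forward direction from the leading coefficient $1-\tun^2\tdeux^2$ (the paper phrases this through Lemma~\ref{thm:P3I} and $\ensinegalite\cap\ensegalite=\emptyset$), and the converse reduces, exactly as in the paper, to $\vert\alphatz\vert=1$ together with $\alphatu=\alphatz\,\alphatdbar$; the paper checks this identity in two cases via the factorizations $\alphatu-\alphatz\,\alphatdbar=c(\tdeux-\tun)\bigl(c(1-\cos\xi)+i\sin\xi\bigr)$ on $\tun\tdeux=1$ and $-(\tdeux+\tun)\bigl(c^2+(2-c^2)\cos\xi-ic\sin\xi\bigr)$ on $\tun\tdeux=-1$, rather than vertex by vertex. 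One small inaccuracy in your sketch is that at the mixed vertices the vanishing factor removes only part of the $c$-dependence: at $(1,-1)$ one gets $\alphatu=\alphatdbar=-1-2ic\sin\xi$, and at $(-1,1)$ one gets $\alphatu=\alphatd=-1+2c^2(1-\cos\xi)$; the surviving terms still match, so the identity and your conclusion stand.
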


\begin{proof}[Proof of lemma~\ref{thm:P3E}]
    By lemma~\ref{thm:P3I}, and since the intersection $\ensinegalite\cap\ensegalite$ is empty, it follows that
    \begin{equation*}
        \polyPt(\cdot;\xi,c,\tun,\tdeux)\in\ensegalite
        \implies
        (\tun,\tdeux)\in\verticestt.
    \end{equation*}
    Denoting $\polyPdcs=\cohnschur(\polyPt)$,  
    we have $\polyPdcs(\cdot;\xi,c,\tun,\tdeux)\equiv0$ if, and only if, 
    \begin{equation*}
        \alphatz\,\alphatzbar = \alphatt\,\alphattbar,
        \quad \alphattbar\,\alphatu = \alphatz\,\alphatdbar
        \quad\text{and}\quad
        \alphattbar\,\alphatd = \alphatz\,\alphatubar .
    \end{equation*}
    It is obvious that two of these three relations imply the third. The previous equalities yield as $\alphatt=1$
    \begin{equation*}
        \alphatz\,\alphatzbar = 1 
        \quad\text{and}\quad
        \alphatu  = \alphatz\,\alphatdbar .
    \end{equation*}
    The condition $\vert\alphatz\vert=1$ reads $\vert\tun\tdeux\vert=1$. 
    First, we assume that $\tun\tdeux=1$. As $(\tun,\tdeux)\in[-1,1]^2$, we have $\tun=\tdeux$ and
    \begin{equation*}
        \alphatu  - \alphatz\,\alphatdbar
        = c(\tdeux-\tun) \bigl(
            c(1-\cos\xi) + i\sin\xi
        \bigr) = 0.
    \end{equation*}
    Second, we assume that $\tun\tdeux=-1$. As $(\tun,\tdeux)\in[-1,1]^2$, we have $\tun=-\tdeux$ and
    \begin{equation*}
        \alphatu  - \alphatz\,\alphatdbar
        = -(\tdeux+\tun) \bigl(
            c^2 + (2-c^2)\cos\xi- ic\sin\xi
        \bigr) = 0.
    \end{equation*}
    That ends the proof.
\end{proof}

The next two lemmas are dedicated to the characterization of membership for the polynomial \( \polyPdcs = \cohnschur(\polyPt) \) obtained after one step of the Cohn-Schur operator.

\begin{lemma}\label{thm:P2csI}
    For $(\tun,\tdeux)\in\squarettinterior\cup\edgestt$ and $(c,\xi)\in\squarecxi$, we have
\begin{equation*}
    \polyPdcs(\cdot;\xi,c,\tun,\tdeux)\in\ensinegalite \iff 
    \left\lbrace
    \begin{aligned}
        &(\tun,\tdeux) \in \squarettinterior,\\
        \text{or}\\
        &(\tun,\tdeux) \in \edgestt^l\cup\edgestt^r \text{ and }
        (c,\xi) \in \squarecxiinterior\cup\edgescxi^l\cup\edgescxi^t,\\
        \text{or}\\
        &(\tun,\tdeux) \in \edgestt^{b}\cup\edgestt^{t} \text{ and } (c,\xi) \in \squarecxiinterior\cup\edgescxi^l.
    \end{aligned}
    \right.
\end{equation*}
\end{lemma}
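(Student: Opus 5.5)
We use the explicit form of the Cohn--Schur operator for the monic cubic $\polyPt$. Since $\alphatt=1$ and $\alphatz=-\tun\tdeux$ is real, we have $\polyPt^\star(0)=1$. Hence
\begin{equation*}
    \polyPdcs=\cohnschur(\polyPt)=(1-\tun^2\tdeux^2)X^2+(\alphatd-\alphatz\overline{\alphatu})X+(\alphatu-\alphatz\overline{\alphatd}).
\end{equation*}
Membership $\polyPdcs\in\ensinegalite$ is therefore equivalent to $\vert\alphatu+\tun\tdeux\,\overline{\alphatd}\vert<1-\tun^2\tdeux^2$. On $\squarettinterior\cup\edgestt$ the right-hand side is positive by Lemma~\ref{thm:P3I}, so we may square both sides. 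The lemma then reduces to a sign study of the real polynomial
\begin{equation*}
    D(\xi,c,\tun,\tdeux)=(1-\tun^2\tdeux^2)^2-\bigl\vert\alphatu+\tun\tdeux\,\overline{\alphatd}\bigr\vert^2 .
\end{equation*}
We need $D>0$ exactly on the announced set.

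First we insert the real and imaginary parts of $\alphatu,\alphatd$ given after \eqref{eq:defP3}. Using $\sin^2\xi=(1-\cos\xi)(1+\cos\xi)$, we write $D$ as a polynomial in $(\cos\xi,c^2,\tun,\tdeux)$ and factor it with computer algebra, as is done throughout the section. At $\xi=0$ we have $\polyPt=(X-1)(X+\tun)(X+\tdeux)$, so $\polyPdcs$ can be computed by hand. This shows that $D(0,\cdot)$ carries the factor $(1-\tun^2)(1-\tdeux^2)$, which vanishes on all edges. We therefore expect a decomposition of the form
\begin{equation*}
    D=(1-\tun^2)(1-\tdeux^2)\,A+(1-\cos\xi)\,B ,
\end{equation*}
with $A>0$. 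Here $B$ should again split into contributions proportional to $(1-\tun^2)$ and to $(1-\tdeux^2)$. The $(1-\tun^2)$-part should carry factors such as $(1-c^2)$ and $(1+\cos\xi)$, so that it vanishes on the top and bottom edges except on $\squarecxiinterior\cup\edgescxi^l$. The $(1-\tdeux^2)$-part should carry $(1-c^2)$ and a factor vanishing at $(c,\xi)=(0,\pi)$, such as $c^2(1-\cos\xi)+(1+\cos\xi)$.

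The interior case $(\tun,\tdeux)\in\squarettinterior$ then follows by showing that every term is nonnegative and that $A>0$ there. The edge cases follow by substitution. For $\tun=\pm1$ (edges $\edgestt^l,\edgestt^r$), $D$ should reduce to $(1-\tdeux^2)$ times an expression in $(c,\xi)$. This expression is positive exactly when $c<1$, $\xi>0$ and $(c,\xi)\neq(0,\pi)$, which is precisely $\squarecxiinterior\cup\edgescxi^l\cup\edgescxi^t$. For $\tdeux=\pm1$ (edges $\edgestt^b,\edgestt^t$), $D$ should reduce to $(1-\tun^2)$ times an expression vanishing when $\xi\in\{0,\pi\}$ or $c=1$, i.e.\ one that is positive exactly on $\squarecxiinterior\cup\edgescxi^l$. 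For each edge we check both directions: positivity on the claimed set, and vanishing of $D$ on its complement (the sign can never be negative there, because $D$ vanishes, not changes sign).

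The main obstacle is the interior case. There one must certify that $D>0$ for all $(c,\xi)\in\squarecxi$, which requires finding a factorization or sum-of-nonnegative-terms representation of $D$ that is transparently positive. The first attempt will be to expand $D$ in powers of $(1-\cos\xi)$ and of $c^2$, with $u=1-\cos\xi\in[0,2]$ and $c^2\in[0,1]$. We then group the coefficients as products of $(1\pm\tun)$, $(1\pm\tdeux)$, $(1-c^2)$ and $(2-u)$. If some coefficient has no definite sign, we will instead treat $D$ as a quadratic in $u$ (or in $c^2$) and control it through its values at the endpoints and its discriminant.
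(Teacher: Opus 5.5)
Your reduction matches the paper's. Since $\polyPt$ is monic with real constant term $-\tun\tdeux$, the condition $\polyPdcs\in\ensinegalite$ is equivalent to $D>0$, and your $D$ is exactly the paper's $\polycdcs$. Your predicted edge behaviour is also what the paper obtains:
\begin{gather*}
D|_{\tun=\pm1}=(1-\cos\xi)(1-c^2)(1-\tdeux^2)^2\bigl(1+\cos\xi+c^2(1-\cos\xi)\bigr),\\
D|_{\tdeux=\pm1}=(1-\cos^2\xi)(1-c^2)(1-\tun^2)^2 .
\end{gather*}
However, you only announce these (``should reduce to'') rather than derive them. Note also that the prefactors are squared, $(1-\tdeux^2)^2$ and $(1-\tun^2)^2$.

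The genuine gap is the interior case $(\tun,\tdeux)\in\squarettinterior$. You identify it as the main obstacle but do not settle it. Your first strategy, a representation $D=(1-\tun^2)(1-\tdeux^2)A+(1-\cos\xi)B$ with all terms nonnegative, fails in its natural form. If $A$ depends only on $(\tun,\tdeux)$, the value at $\xi=0$ forces $A=(1-\tun\tdeux)^2$, and then $B$ changes sign. For example, at $\tun=-\tfrac12$, $\tdeux=\tfrac12$, $c=1$, $\xi=\pi$ one gets $B=-9/32$, while $D=81/256>0$. So positivity comes from a cancellation between the two pieces, not from termwise signs.

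The missing idea is a sharpened version of your fallback. As a polynomial in $c^2$, $D$ has degree two with leading coefficient $-\tun^2(1-\tdeux^2)^2(1-\cos\xi)^2\le 0$. Hence $c^2\mapsto D$ is concave on $[0,1]$ and $D\ge\min\bigl(D|_{c=0},D|_{c=1}\bigr)$; no discriminant is needed. The two endpoint values factor as
\begin{gather*}
D|_{c=0}=(1-\tun\tdeux)^2\bigl((1-\tun^2)(1-\tdeux^2)+\sin^2\xi\,(\tun+\tdeux)^2\bigr),\\
D|_{c=1}=(1-\tun^2)(1-\tdeux^2)\bigl((1-\tun\tdeux\cos\xi)^2+\sin^2\xi\,\tun^2\tdeux^2\bigr),
\end{gather*}
and both are strictly positive on $\squarettinterior$. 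This is precisely the paper's argument. Without it, or an equivalent certificate, your proposal does not establish the interior case, and hence does not prove the lemma.
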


\begin{proof}[Proof of lemma~\ref{thm:P2csI}]
    The polynomial $\polyPdcs=\cohnschur(\polyPt)$ is given in \eqref{eq:ann_defP2cs}. Then, the condition $\polyPdcs(\cdot;\xi,c,\tun,\tdeux)\in\ensinegalite$ reads
    \begin{equation*}
        \polycdcs(\xi, c, \tun, \tdeux) = 
        \betadcsz + \betadcsu c^2 + \betadcsd c^4 > 0,
    \end{equation*}
    with
    \begin{align*}
        \betadcsz &= (1-\tun \tdeux)^2 
        \bigl( (1+\tun \tdeux)^2 - \cos^2(\xi)(\tun+\tdeux)^2 \bigr),\\
        \betadcsu &= \begin{multlined}[t]
            (\cos\xi-1) \big( 
                \tdeux^2 (1 - \tun^2)^2 + \cos\xi \big(
                \tun^4\tdeux^2 
                + 2\tun^3\tdeux^3 
                + 2\tun^2\tdeux^4 
                - 2\tun^3\tdeux 
                - 6\tun^2\tdeux^2 \\ 
                - 2\tun \tdeux^3 
                + 2\tun^2 
                + 2\tun \tdeux 
                + \tdeux^2 
                \big)
            \big),
        \end{multlined}\\
        \betadcsd &= - (1-\tdeux^2)^2 \tun^2 (1-\cos\xi)^2.
    \end{align*}

    First, we consider $(\tun,\tdeux)\in\squarettinterior$, that is $\vert\tun\vert<1$ and $\vert\tdeux\vert<1$.
    We observe that $\polycdcs$ can be seen as a polynomial of degree 2 in the indeterminate $c^2$. As $\betadcsd\leq0$, this polynomial is concave.
    We then compute its value in $c^2=0$ and $c^2=1$:
    \begin{align*}
        \polycdcs(\xi,0,\tun,\tdeux) &= \betadcsz =
        (1-\tun\tdeux)^2\bigl(
            (1-\tun^2)(1-\tdeux^2) 
            + (1-\cos^2(\xi))(\tun+\tdeux)^2
        \bigr),\\
        \polycdcs(\xi,1,\tun,\tdeux) &= \betadcsz + \betadcsu + \betadcsd\\
        &= (1-\tun^2)(1-\tdeux^2)
        \bigl(
            (1-\cos\xi\tun\tdeux)^2+(1-\cos^2(\xi))\tun^2\tdeux^2
        \bigr).
    \end{align*}
    % First $\polycdcs(\xi,0,\tun,\tdeux)\geq0$ and $\polycdcs(\xi,0,\tun,\tdeux)>0$ if, and only if, $\xi\notin\lbrace0,\pm\pi\rbrace$ or $(\vert\tun\vert<1\text{ and }\vert\tdeux\vert<1)$.
    % Second $\polycdcs(\xi,1,\tun,\tdeux)\geq0$ and $\polycdcs(\xi,1,\tun,\tdeux)>0$ if, and only if, $\vert\tun\vert<1\text{ and }\vert\tdeux\vert<1$.
    % We obtain that, if $\vert\tun\vert<1\text{ and }\vert\tdeux\vert<1$, then the condition $\polyPdcs(\cdot;\xi,c,\tun,\tdeux)\in\ensinegalite$ is verified.
    % Moreover, if $\vert\tun\vert=1$ or $\vert\tdeux\vert=1$ the condition is not verified if $c^2=1$ or if $\xi\in\lbrace0,\pm\pi\rbrace$, since we have
    % \begin{equation*}
    %     \polycdcs(0,c,\tun,\tdeux) = (1-\tun\tdeux)^2
    %         (1-\tun^2)(1-\tdeux^2) .
    % \end{equation*}
    % That ends the proof.
    % {\afaire
    % Il faut corriger ce lemme en utilisant
    % \begin{equation*}
    %     \polycdcs(\pm\pi,c,\pm1,\tdeux) = 4(1-\tdeux^2)^2 c^2(1-c^2),
    %     \quad
    %     \polycdcs(\pm\pi,c,\tun,\pm1) = 0.
    % \end{equation*}
    % }
    We have
    $\polycdcs(\xi,0,\tun,\tdeux)>0$ and $\polycdcs(\xi,1,\tun,\tdeux)>0$, the condition $\polycdcs(\xi,c,\tun,\tdeux)>0$ is then satisfied for $(c,\xi)\in\squarecxi$.

    Second, we consider $(\tun,\tdeux)\in\edgestt^l\cup\edgestt^r$, that is $\vert\tun\vert=1$ and $\vert\tdeux\vert<1$. We have
    \begin{equation*}
        \polycdcs(\xi,c,\pm1,\tdeux) = 
        (1-\cos\xi)(1-c^2)(1-\tdeux^2)^2(1+\cos\xi+c^2(1-\cos\xi)).
    \end{equation*}
    The condition $\polycdcs(\xi,c,\pm1,\tdeux)>0$ is then satisfied for $(c,\xi)\in\squarecxiinterior\cup\edgescxi^l\cup\edgescxi^t$.

    Third, we consider $(\tun,\tdeux)\in\edgestt^b\cup\edgestt^t$, that is $\vert\tun\vert<1$ and $\vert\tdeux\vert=1$. We have
    \begin{equation*}
        \polycdcs(\xi,c,\tun,\pm1) = 
        (1-\cos^2\xi)(1-c^2)(1-\tun^2)^2.
    \end{equation*}
    The condition $\polycdcs(\xi,c,\tun,\pm1)>0$ is then satisfied for $(c,\xi)\in\squarecxiinterior\cup\edgescxi^l$.
\end{proof}

\begin{lemma}\label{thm:P2csE}
%     For $\xi\in[-\pi,\pi]$ and $(c, \tun, \tdeux) \in[-1,1]^3$ such that $\vert\tun\tdeux\vert<1$, we have
% \begin{equation*}
%     \polyPdcs(\cdot;\xi,c,\tun,\tdeux)\in\ensegalite \iff \left\lbrace
%     \begin{aligned}
%         &\vert\tun\vert=1 \text{ and } \left\lbrace\begin{aligned}
%             &\xi=0,\\
%             \text{or}\\
%             &c^2=1,\\
%             \text{or}\\
%             &\xi=\pm\pi \text{ and } c=0;
%         \end{aligned} \right.\\
%         \text{or}\\
%         &\vert\tdeux\vert=1 \text{ and } \left\lbrace\begin{aligned}
%             &\xi\in\lbrace-\pi,0,\pi\rbrace,\\
%             \text{or}\\
%             &c^2=1.
%         \end{aligned} \right.\\
%     \end{aligned}
%     \right.
% \end{equation*}
    For $(\tun,\tdeux)\in\squarettinterior\cup\edgestt$ and $(c,\xi)\in\squarecxi$, we have
    \begin{equation*}
        \polyPdcs(\cdot;\xi,c,\tun,\tdeux)\in\ensegalite \iff \left\lbrace
        \begin{aligned}
            &\begin{multlined}
                (\tun,\tdeux)\in\edgestt^{l}\cup\edgestt^r \\
                \text{ and } (c,\xi)\in 
                \edgescxi^b\cup\edgescxi^r\cup\verticescxi,
            \end{multlined} \\
            \text{or}\\
            &\begin{multlined}
                (\tun,\tdeux)\in\edgestt^{b}\cup\edgestt^t \\
                \text{ and } (c,\xi)\in 
                \edgescxi^b\cup\edgescxi^r\cup\edgescxi^t\cup\verticescxi.
            \end{multlined}
        \end{aligned} \right.
    \end{equation*}
\end{lemma}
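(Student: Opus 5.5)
Let $\polyPdcs = \gamma_0 + \gamma_1 X + \gamma_2 X^2$ be the explicit expression from \eqref{eq:ann_defP2cs}. Then $\polyPdcs\in\ensegalite$ means $\cohnschur(\polyPdcs)\equiv 0$, which reduces to the two conditions
\begin{equation*}
    \vert\gamma_0\vert=\vert\gamma_2\vert
    \qquad\text{and}\qquad
    \overline{\gamma_2}\,\gamma_1=\gamma_0\,\overline{\gamma_1}.
\end{equation*}
This is the same argument as in the proof of Lemma~\ref{thm:P3E}: two of the three relations imply the third once $\vert\gamma_0\vert=\vert\gamma_2\vert\neq0$. We must also treat the degenerate possibility $\gamma_0=\gamma_2=0$, where the condition becomes $\gamma_1\overline{\gamma_1}\cdot 0=0$; in that case $\cohnschur(\polyPdcs)$ vanishes automatically.

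Since $\ensinegalite\cap\ensegalite=\emptyset$, Lemma~\ref{thm:P2csI} gives the "only if" direction at once. It excludes $\squarettinterior$ entirely. On the edges $\edgestt^l\cup\edgestt^r$ it restricts $(c,\xi)$ to the complement of $\squarecxiinterior\cup\edgescxi^l\cup\edgescxi^t$, namely $\edgescxi^b\cup\edgescxi^r\cup\verticescxi$. On the edges $\edgestt^b\cup\edgestt^t$ it restricts $(c,\xi)$ to $\edgescxi^b\cup\edgescxi^r\cup\edgescxi^t\cup\verticescxi$. These complements are exactly the sets in the statement.

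It remains to prove the converse: on these remaining sets, $\polyPdcs$ does lie in $\ensegalite$. There the first condition already holds, because the proof of Lemma~\ref{thm:P2csI} shows that the factored expressions of $\polycdcs$ vanish exactly there, i.e.\ $\vert\gamma_0\vert=\vert\gamma_2\vert$. So only the second identity $\overline{\gamma_2}\gamma_1=\gamma_0\overline{\gamma_1}$ has to be checked, case by case:
\begin{itemize}
    \item $\xi=0$ (edge $\edgescxi^b$ and vertices $\verticescxi^{lb},\verticescxi^{rb}$): $\polyPt=(X-1)(X+\tun)(X+\tdeux)$ has real coefficients, so $\polyPdcs$ is real. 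Substituting $\vert\tun\vert=1$ or $\vert\tdeux\vert=1$ should make $\polyPdcs$ a multiple of a self-inversive polynomial (e.g.\ a constant times $(X-1)(X\mp1)$), which is checked directly.
    \item $c=1$ (edge $\edgescxi^r$ and vertex $\verticescxi^{rt}$): substitute $c=1$ together with $\tun=\pm1$ or $\tdeux=\pm1$ and verify the identity symbolically in $\xi$ and the remaining relaxation parameter.
    \item $\xi=\pi$ with $\vert\tdeux\vert=1$ (edge $\edgescxi^t$): use the factorization $\polyPt=(X-\tun)(X^2+(2c^2-1)(\tdeux+1)X+\tdeux)$ from the proof of Proposition~\ref{thm:cond_sc}. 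With $\tdeux=\pm1$ the quadratic factor is self-inversive, which should give $\cohnschur(\polyPdcs)=0$.
    \item $c=0$, $\xi=\pi$ (vertex $\verticescxi^{lt}$): same as the previous case, using the same factorization.
\end{itemize}

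The main obstacle is handling these verifications uniformly rather than drowning in the raw coefficients of $\polyPdcs$. I would first factor $\gamma_0,\gamma_1,\gamma_2$ symbolically (in SageMath, as the paper does) on each edge/vertex combination. I would also watch for subcases where $\gamma_2=\gamma_0=0$, i.e.\ $\polyPdcs$ has degree below 2, in which case membership in $\ensegalite$ is immediate. Finally, I would use the structural fact that if $\polyPt$ factors as a self-inversive factor times another factor, then $\cohnschur$ behaves multiplicatively up to scalars. This explains why $\polyPdcs$ becomes self-inversive exactly on these sets and lets the computer algebra serve only as a confirmation.
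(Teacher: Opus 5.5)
Your proposal is correct, but you obtain the converse by a different route than the paper. The paper uses $\ensinegalite\cap\ensegalite=\emptyset$ and Lemma~\ref{thm:P2csI} only to reduce to $|\tun|=1$ or $|\tdeux|=1$. It then factors \emph{both} coefficients $\alphaucscsz$ and $\alphaucscsu$ of $\cohnschur(\polyPdcs)$ on each edge $\tun=\pm1$, $\tdeux=\pm1$ and reads off their common zero set, which gives both implications in one computation.

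You take the whole ``only if'' direction from Lemma~\ref{thm:P2csI}. This is legitimate because $\polycdcs=\alphaucscsu=|\gamma_2|^2-|\gamma_0|^2$, so the complements are exactly the sets in the statement. You then only need the constant coefficient to vanish on those sets, and your structural idea does this cleanly. If $S^\star=\epsilon S$ with $|\epsilon|=1$, then $\cohnschur(SR)=\epsilon S(0)\,S\,\cohnschur(R)$ and $\cohnschur(X-r)=1-|r|^2$. So whenever $\polyPt$ has two roots on the unit circle, $\polyPdcs$ is a multiple of the corresponding quadratic factor and $\cohnschur(\polyPdcs)=0$. This explains \emph{why} exactly these sets appear. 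The paper's explicit factorizations have a different advantage: they are reused directly in Propositions~\ref{thm:stabedgeb}--\ref{thm:stabedger}.

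Two details need fixing.

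First, your last bullet ($|\tun|=1$, $|\tdeux|<1$, $(c,\xi)=(0,\pi)$) is not ``the same'' as the previous one. There $\polyPt=(X-\tun)(X-1)(X-\tdeux)$, and the quadratic factor $(X-1)(X-\tdeux)$ is not self-inversive. You must regroup with $S=(X-\tun)(X-1)$ and $R=X-\tdeux$.

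Second, the case $c=1$ does not need computer algebra. At $c=1$, $\matG$ is triangular after swapping the last two indices, so $\polyPt=(X+\tdeux)(X+\tun e^{\i\xi})(X-e^{-\i\xi})$. The same argument then applies with $S$ the product of the two factors with unimodular roots, which makes the whole converse computation-free.

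A minor point: for a quadratic there are only two conditions, not three. Also $\gamma_2=1-\tun^2\tdeux^2>0$, so the degenerate case $\gamma_0=\gamma_2=0$ never arises.
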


\begin{proof}[Proof of lemma~\ref{thm:P2csE}]
    We first reduce the investigation to $\vert\tun\vert=1$ or $\vert\tdeux\vert=1$. 
    % Then we prove that this condition is a necessary condition for $\polyPdcs\in\ensegalite$.
    Indeed, as the intersection $\ensinegalite\cap\ensegalite$ is empty, we necessarely have 
    \begin{equation*}
        \polyPdcs(\cdot;\xi,c,\tun,\tdeux)\in\ensegalite
        \implies (\tun,\tdeux)\in\edgestt\cup\verticestt
    \end{equation*}
    thanks to the previous lemma~\ref{thm:P2csI}.
    Note that we assume $\vert\tun\tdeux\vert<1$, then both equalities $\vert\tun\vert=1$ and $\vert\tdeux\vert=1$ are not allowed simultaneously in this proof. The expression of the polynomial $\polyPucscs$ is given in \eqref{eq:ann_defP1cscs}.
    
    Considering $\tun=-1$, the coefficients $\alphaucscsz$ and $\alphaucscsu$ admit these factorizations:
    \begin{align*}
        \alphaucscsz &= 
        (1-\cos\xi)(1-c^2)(1-\tdeux^2)^2(c^2(1-\cos\xi)-(1+\cos\xi)),\\
        \alphaucscsu &= 
        (1-\cos\xi)(1-c^2)(1-\tdeux^2)^2(c^2(1-\cos\xi)+(1+\cos\xi)).
    \end{align*}
    As $\vert\tdeux\vert<1$, the equalities $\alphaucscsz=\alphaucscsu=0$ are then equivalent to
    \begin{equation*}
        \cos\xi=1
        \text{ or }
        c^2=1
        \text{ or }
        (c=0\text{ and }\cos\xi=-1).
    \end{equation*}
    Considering $\tun=1$, the coefficients $\alphaucscsz$ and $\alphaucscsu$ admit these factorizations:
    \begin{gather*}
        % \Re(\alphaucscsz) &= 
        % -(1-\cos\xi)(1-c^2)(1-\tdeux^2)^2(1+\cos\xi+c^2(1-\cos\xi)),\\
        % \Im(\alphaucscsz) &= 
        % 2\sin\xi(1-\cos\xi)c(1-c^2)(1-\tdeux^2)^2,
        % \\
        % \alphaucscsu &= 
        % (1-\cos\xi)(1-c^2)(1-\tdeux^2)^2(1+\cos\xi+c^2(1-\cos\xi)).
        \alphaucscsu = - \Re(\alphaucscsz) = 
        (1-\cos\xi)(1-c^2)(1-\tdeux^2)^2(1+\cos\xi+c^2(1-\cos\xi)),\\
        \Im(\alphaucscsz) = 
        2\sin\xi(1-\cos\xi)c(1-c^2)(1-\tdeux^2)^2.
    \end{gather*}
    As $\vert\tdeux\vert<1$, the equalities $\alphaucscsz=\alphaucscsu=0$ are then equivalent to
    \begin{equation*}
        \cos\xi=1
        \text{ or }
        c^2=1
        \text{ or }
        (c=0\text{ and }\cos\xi=-1).
    \end{equation*}
    Considering now $\tdeux=-1$, the coefficients $\alphaucscsz$ and $\alphaucscsu$ admit these factorizations:
    \begin{equation*}
        \alphaucscsu = -\alphaucscsz = 
        (1-\cos^2\xi)(1-c^2)(1-\tun^2)^2.
    \end{equation*}
    As $\vert\tun\vert<1$, the equalities $\alphaucscsz=\alphaucscsu=0$ are then equivalent to
    \begin{equation*}
        \cos^2\xi=1
        \text{ or }
        c^2=1.
    \end{equation*}
    Considering finally $\tdeux=1$, the coefficients $\alphaucscsz$ and $\alphaucscsu$ admit these factorizations:
    \begin{gather*}
        \alphaucscsu = -\Re(\alphaucscsz) = 
        (1-\cos^2\xi)(1-c^2)(1-\tun^2)^2,\\
        \Im(\alphaucscsz) = 
        2\sin\xi(1-\cos\xi)c(1-c^2)(1-\tun^2)^2.
    \end{gather*}
    As $\vert\tun\vert<1$, the equalities $\alphaucscsz=\alphaucscsu=0$ are then equivalent to $\cos^2\xi=1$ or $c^2=1$.
\end{proof}

%%%%%%%%%%%%%%%%%%%%%%%%%%%%%%%%%%%%
% stabilité dans le carré ouvert
%%%%%%%%%%%%%%%%%%%%%%%%%%%%%%%%%%%%
\subsection{Stability on \texorpdfstring{$\squarettinterior$}{the open square}}
%%%%%%%%%%%%%%%%%%%%%%%%%%%%%%%%%%%%

In this section, we consider $(\tun,\tdeux)\in\squarettinterior$, that is $\vert\tun\vert<1$ and $\vert\tdeux\vert<1$. We prove in Prop.~\ref{thm:stabsquare} that (see Fig.~\ref{fig:stability_square}):
\begin{itemize}
    \item for $\vert\tun\vert<1$ and $\vert\tdeux\vert<1$, the scheme is stable for $c\in\lbrace0,1\rbrace$;
    \item for $\vert\tun\vert<1$, $\vert\tdeux\vert<1$, and $0<c<1$, the scheme is stable if, and only if $-1<\tdeux\leq\tdeux^\star(\tun)$, the function $\tdeux^\star$ being defined in the sequel.
\end{itemize}

\begin{figure}[htb]
    \includegraphics[width=.33\textwidth]{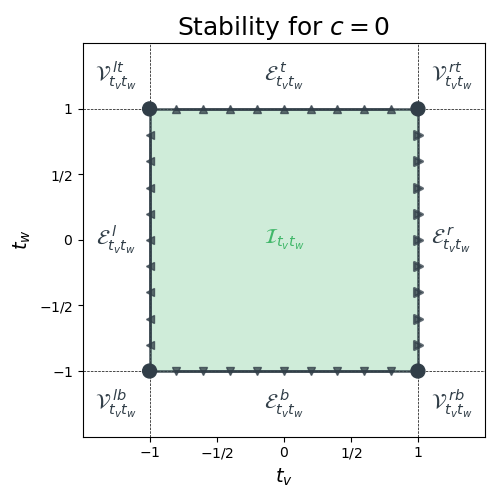}%
    \hfill%
    \includegraphics[width=.33\textwidth]{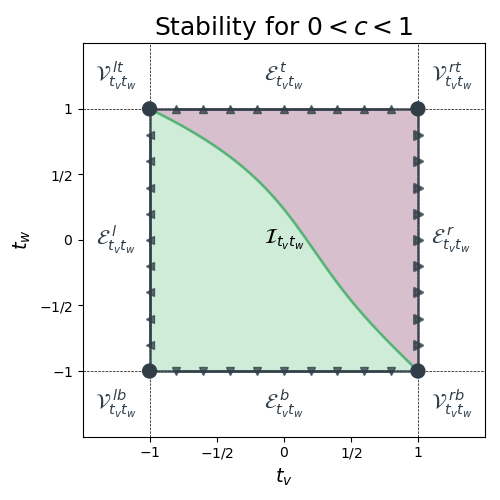}%
    \hfill%
    \includegraphics[width=.33\textwidth]{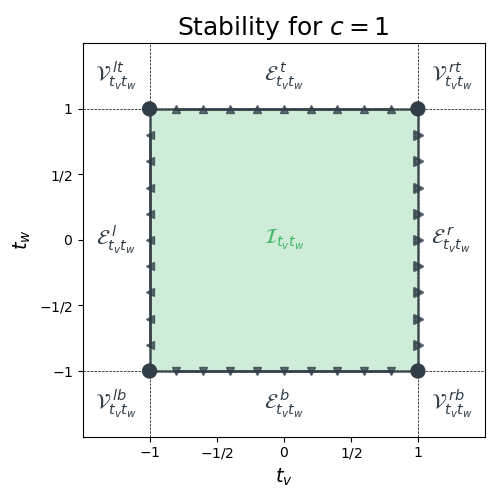}%
    \caption{Stability for $(\tun,\tdeux)\in\squarettinterior$. Green means stable while purple means unstable.}
    \label{fig:stability_square}
\end{figure}

We first give some technical results used to describe the stability area. We introduce a polynomial of degre two of the variables $\tun$ and $\tdeux$ and we investigate its sign. We will see in the sequel that this sign precisely describes the stability area restricted to the open square $(\tun,\tdeux)\in\squarettinterior$. Let's be
\begin{equation*}
    \Upsilon(t,s) = 
    - t^2s^2 + 6ts^2 + 4t^2s - s^2 + t^2 - 4s - 6t + 1 .
\end{equation*}

\begin{figure}
    \includegraphics[width=.45\textwidth]{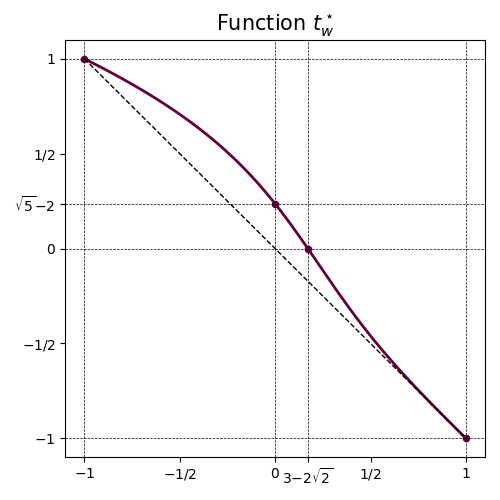}
    \hfill
    \includegraphics[width=.45\textwidth]{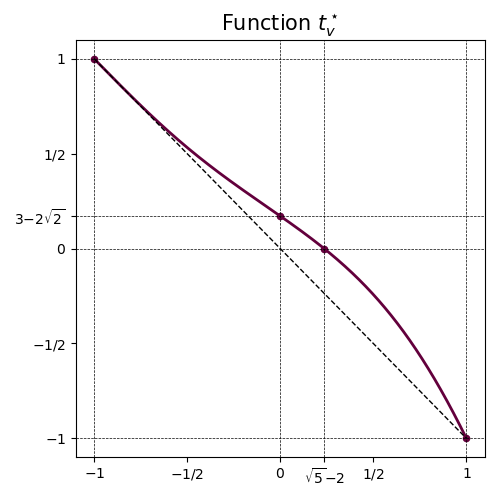}
    \caption{Representation of the functions $\tdeux^\star$ and $\tun^\star$}
    \label{fig:repr_twtv_star}
\end{figure}

\subsubsection{Parametrization according to \texorpdfstring{$\tun$}{first relaxation parameter}}

In this first paragraph, we consider $\Upsilon$ as an element of $\mathbb{R}[t][s]$.
We introduce two polynomial functions $\kappa, \eta:[-1,1]\to\mathbb{R}$ by
\begin{equation*}
    \kappa(t) = 1-6t+t^2,
    \qquad
    \eta(t) = 2(1-t^2),
\end{equation*}
in order to have
\begin{equation*}
    \Upsilon(t, s) = \kappa(t)(1-s^2) - 2\eta(t)s.
\end{equation*}
We immediately have the following table of signs
\begin{center}
    \begin{tikzpicture}
        \tkzTabInit[espcl=4, deltacl=.75]{
            $t$ / .5 , $\kappa(t)$ / .75, $\eta(t)$ / .75
        }{
            $-1$, $3-2\sqrt{2}$, $1$
        }
        \tkzTabLine{ t , + , z , - , t}
        \tkzTabLine{ z , + , t , + , z}
    \end{tikzpicture}
\end{center}

We then define two functions that are the roots of $\Upsilon$ parametrized by the first variable $t$.
The first one $\tdeux^\star:[-1,1]\to\mathbb{R}$ 
and the second one $\tdeux^\diamond:[-1,1]\backslash\lbrace 3-2\sqrt{2}\rbrace\to\mathbb{R}$ read
\begin{equation}
    \label{eq:deftwstar}
    \tdeux^\star(t) = \frac{\kappa(t)}{\sqrt{\eta(t)^2{+}\kappa(t)^2}+\eta(t)},
    \quad
    %\tdeux^\diamond(t) = -\frac{\kappa(t)}{\sqrt{\eta(t)^2+\kappa(t)^2}-\eta(t)}.
    \tdeux^\diamond(t) = -\frac{\sqrt{\eta(t)^2{+}\kappa(t)^2}+\eta(t)}{\kappa(t)}.
\end{equation}

We have the following table of variations proved in Prop.~\ref{thm:tdeuxstardecroissante}.

\begin{center}
    \begin{tikzpicture}
        \tkzTabInit[espcl=4, deltacl=.75]{
            $t$ / .5, $\tdeux^\star(t)$ / .75, $\tdeux^\diamond(t)$ / .75
        }{
            $-1$, $3{-}2\sqrt{2}$, $1$
        }%
        \tkzTabVar{ +/$1$, R , -/$-1$ }%
        \tkzTabIma{1}{3}{2}{0}
        \tkzTabVar{ +/$-1$, -D+/$-\infty$/$+\infty$, -/ $1$ }%
    \end{tikzpicture}
\end{center}

\begin{proposition}\label{thm:tdeuxstardecroissante}
    The functions $\tdeux^\star$ and $\tdeux^\diamond$ defined in \eqref{eq:deftwstar} satisfy the three following properties:
    \begin{itemize}
        \item the function $\tdeux^\star$ is decreasing and bijective from $[-1,1]$ into $[-1,1]$; %(See Fig.~\ref{fig:repr_tw_star});
        \item the function $\tdeux^\diamond$ is decreasing and bijective from $[-1,3{-}\sqrt{2}[$ into $]-\infty,-1]$;
        \item the function $\tdeux^\diamond$ is decreasing and bijective from $]3{-}\sqrt{2},1]$ into $[1,+\infty[$.
    \end{itemize}
\end{proposition}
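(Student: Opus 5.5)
The plan is to avoid differentiating the square-root formulas in \eqref{eq:deftwstar} directly. Instead I would use an angular parametrization of the pair $(\kappa(t),\eta(t))$, together with the observation that $\tdeux^\diamond$ is a simple function of $\tdeux^\star$. Note that the bounds $3{-}\sqrt{2}$ in the statement must be read as $3{-}2\sqrt{2}$, the root of $\kappa$ in $[-1,1]$. This is the value appearing in the sign table and in the domain of $\tdeux^\diamond$.

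\emph{Step 1: relation between the two roots.} Since $\Upsilon(t,s)=\kappa(t)(1-s^2)-2\eta(t)s$ is quadratic in $s$ with leading coefficient $-\kappa(t)$ and constant term $\kappa(t)$, the product of its roots is $-1$ whenever $\kappa(t)\neq0$. Rationalizing gives
\begin{equation*}
\tdeux^\star(t)=\frac{\sqrt{\eta(t)^2+\kappa(t)^2}-\eta(t)}{\kappa(t)},
\qquad\text{hence}\qquad
\tdeux^\star(t)\,\tdeux^\diamond(t)=-1 \quad\text{for } t\neq 3-2\sqrt{2}.
\end{equation*}
So $\tdeux^\diamond=-1/\tdeux^\star$, and the second and third items will follow from the first together with the sign of $\tdeux^\star$.

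\emph{Step 2: angular form of $\tdeux^\star$.} Since $\eta\geq0$ on $[-1,1]$ and $(\kappa,\eta)$ never vanishes simultaneously ($\eta=0$ only at $t=\pm1$, where $\kappa=8$ and $\kappa=-4$), I write $\kappa=\rho\cos\psi$ and $\eta=\rho\sin\psi$. Here $\rho>0$ and $\psi(t)\in[0,\pi]$ depends continuously on $t$. Then
\begin{equation*}
\tdeux^\star=\frac{\cos\psi}{1+\sin\psi}=\tan\Bigl(\frac{\pi}{4}-\frac{\psi}{2}\Bigr),
\end{equation*}
which is a strictly decreasing function of $\psi\in[0,\pi]$ with values in $[-1,1]$. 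It therefore suffices to show that $\psi$ is strictly increasing from $\psi(-1)=0$ to $\psi(1)=\pi$. The endpoint values follow from the signs of $\kappa$ at $t=\pm1$. On $(-1,1)$ we have $\psi\in(0,\pi)$ and $\cot\psi=\kappa/\eta$, so it is enough that $\kappa/\eta$ is strictly decreasing on $(-1,1)$. A direct computation gives
\begin{equation*}
\frac{\mathrm{d}}{\mathrm{d}t}\frac{\kappa}{\eta}=\frac{(2t-6)(1-t^2)+2t(1-6t+t^2)}{2(1-t^2)^2}=\frac{-6+4t-6t^2}{2(1-t^2)^2}.
\end{equation*}
The numerator has negative discriminant and negative leading coefficient, so it is always negative. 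Continuity at the endpoints then gives that $\tdeux^\star$ is a decreasing bijection from $[-1,1]$ onto $[-1,1]$, with $\tdeux^\star(-1)=1$ and $\tdeux^\star(1)=-1$. It vanishes exactly where $\psi=\pi/2$, that is at $t=3-2\sqrt2$.

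\emph{Step 3: $\tdeux^\diamond$.} On $[-1,3-2\sqrt2)$, $\tdeux^\star$ decreases from $1$ to $0^+$, so $-1/\tdeux^\star$ decreases from $-1$ to $-\infty$. On $(3-2\sqrt2,1]$, $\tdeux^\star$ decreases from $0^-$ to $-1$, so $-1/\tdeux^\star$ decreases from $+\infty$ to $1$. Together with continuity this yields the two bijections claimed.

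The only real obstacle is Step 2. The formula for $\tdeux^\star$ involves a square root and a denominator that change regime at $\kappa=0$, so differentiating it directly is messy. The angle substitution reduces everything to the sign of the explicit quadratic $-6+4t-6t^2$, and the only delicate point is keeping $\psi$ continuous through $\pi/2$, which is guaranteed by $\eta>0$ on $(-1,1)$.
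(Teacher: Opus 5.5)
Your proof is correct and follows essentially the paper's route. The paper writes $\tdeux^\star=\varphi(\kappa/\eta)$ on $(-1,1)$ with $\varphi(s)=s/(\sqrt{1+s^2}+1)$, which is exactly your $\tan(\pi/4-\psi/2)$ after setting $s=\cot\psi$. It then shows $\kappa/\eta$ is decreasing by the same derivative computation, and obtains the two items on $\tdeux^\diamond$ from $\tdeux^\diamond=-1/\tdeux^\star$. Your reading of $3{-}\sqrt{2}$ as $3{-}2\sqrt{2}$ is also right. Your continuity argument at $t=\pm1$ is slightly more careful than the paper's, which only computes the endpoint values and differentiates on the interior.
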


\begin{proof}[Proof of Prop.~\ref{thm:tdeuxstardecroissante}]
    As $\eta(-1)=\eta(1)=0$, $\kappa(-1)>0$, and $\kappa(1)<0$ , we first have 
    \begin{equation*}
        \tdeux^\star(-1)=1,
        \quad
        \tdeux^\star(1)=-1,
        \quad
        \tdeux^\diamond(-1)=-1,
        \quad
        \tdeux^\diamond(1)=1.
    \end{equation*}
    For $t\in(-1,1)$, the function $\tdeux^\star$ reads
    \begin{equation*}
        \tdeux^\star(t) = \varphi\Bigl(
        \frac{\kappa}{\eta}(t) \Bigr)
        \qquad \text{with }
        \varphi(s) = \frac{s}{\sqrt{1+s^2}+1}.
    \end{equation*}
    The function $\varphi$ is defined for $s\in\mathbb{R}$ and verifies
    \begin{equation*}
        \varphi'(s) = \frac{1}{
            \sqrt{1+s^2}(1+\sqrt{1+s^2})
        } > 0 \quad s\in\mathbb{R},
    \end{equation*}
    the function $\varphi$ is then an increasing function.
    % For $t\in(-1,1)\backslash\lbrace3{-}2\sqrt{2}\rbrace$, the function $\tdeux^\diamond$ reads
    % \begin{equation*}
    %     \tdeux^\diamond(t) = \psi\Bigl(
    %     \frac{\kappa}{\eta}(t) \Bigr)
    %     \qquad \text{with }
    %     \psi(s) = \frac{s}{\sqrt{1+s^2}-1}.
    % \end{equation*}
    % The function $\psi$ is defined for $s\in\mathbb{R}\backslash\lbrace0\rbrace$ and verifies
    % \begin{equation*}
    %     \psi'(s) = \frac{1}{
    %         \sqrt{1+s^2}(\sqrt{1+s^2}-1)
    %     } > 0 \quad s\in\mathbb{R}\backslash\lbrace0\rbrace,
    % \end{equation*}
    % the function $\psi$ is then an increasing function and $\psi(s)$ goes to $\mp\infty$ for $s$ going to $0^\pm$.    
    Moreover we have
    \begin{equation*}
        {\left(\frac{\kappa}{\eta}\right)}'(t) 
        = \frac{\kappa'(t)\eta(t) - \kappa(t)\eta'(t)}{\eta^2(t)} 
        = - \frac{(3t-1)^2 + 8}{3(1-t^2)^2} < 0,
    \end{equation*}
    the function $\kappa/\eta$ is then a decreasing function, what proves the first propertie. The second and third properties are immediately obtained after remarking that $\tdeux^\diamond(t)=-1/\tdeux^\star(t)$.
\end{proof}

For a given $t\in[-1,1]$, the roots of the polynomial $\Upsilon(t,s)$, viewed as a quadratic polynomial in $s$ with coefficients that are themselves quadratic polynomials in $t$, are exactly $\tdeux^\star(t)$ and $\tdeux^\diamond(t)$ (for $t=3{-}2\sqrt{2}$, the only existing root is $\tdeux^\star(t)$).

\subsubsection{Parametrization according to \texorpdfstring{$\tdeux$}{second relaxation parameter}}

In this second paragraph, we consider $\Upsilon$ as an element of $\mathbb{R}[s][t]$.
We introduce two polynomial functions $\mu, \nu:[-1,1]\to\mathbb{R}$ by
\begin{equation*}
    \mu(s) = 1+4s-s^2,
    \qquad
    \nu(s) = 3(1-s^2),
\end{equation*}
in order to have
\begin{equation*}
    \Upsilon(t, s) = \mu(s) t^2 - 2\nu(s) t + \mu(-s).
\end{equation*}
We immediately have the following table of signs
\begin{center}
    \begin{tikzpicture}
        \tkzTabInit[espcl=2.5, deltacl=.75]{
            $s$ / .5 , $\mu(s)$ / .75 , $\mu(-s)$ / .75 , $\nu(s)$ / .75
        }{
            $-1$, $2-\sqrt{5}$, $\sqrt{5}-2$, $1$
        }
        \tkzTabLine{ t , - , z , + , t, +, t}
        \tkzTabLine{ t , + , t , + , z, -, t}
        \tkzTabLine{ z , + , t , + , t, +, z}
    \end{tikzpicture}
\end{center}

We then define two functions that are the roots of $\Upsilon$ parametrized by the second variable $s$.
The first one $\tun^\star:[-1,1]\to\mathbb{R}$ 
and the second one $\tun^\diamond:[-1,1]\backslash\lbrace 2-\sqrt{5}\rbrace\to\mathbb{R}$ read
\begin{equation}
    \label{eq:deftvstar}
    \tun^\star(s) = \frac{\mu(-s)}{\sqrt{\nu(s)^2{-}\mu(s)\mu(-s)}+\nu(s)},
    \quad
    \tun^\diamond(t) = \frac{\sqrt{\nu(s)^2{-}\mu(s)\mu(-s)} + \nu(s)}{\mu(s)}.
\end{equation}

We have the following table of variations proved in Prop.~\ref{thm:tunstardecroissante}.

\begin{center}
    \begin{tikzpicture}
        \tkzTabInit[espcl=4, deltacl=.75]{
            $t$ / .5, $\tun^\star(s)$ / .75, $\tun^\diamond(s)$ / .75
        }{
            $-1$, $\sqrt{5}{-}2$, $1$
        }%
        \tkzTabVar{ +/$1$, R , -/$-1$ }%
        \tkzTabIma{1}{3}{2}{0}
        \tkzTabVar{ +/$-1$, -D+/$-\infty$/$+\infty$, -/ $1$ }%
    \end{tikzpicture}
\end{center}

\begin{proposition}\label{thm:tunstardecroissante}
    The functions $\tun^\star$ and $\tun^\diamond$ defined in \eqref{eq:deftvstar} satisfy the three following properties:
    \begin{itemize}
        \item the function $\tun^\star$ is decreasing and bijective from $[-1,1]$ into $[-1,1]$;
        \item the function $\tun^\diamond$ is decreasing and bijective from $[-1,\sqrt{5}{-}2[$ into $]-\infty,-1]$;
        \item the function $\tun^\diamond$ is decreasing and bijective from $]\sqrt{5}{-}2,1]$ into $[1,+\infty[$.
    \end{itemize}
\end{proposition}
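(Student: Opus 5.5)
The plan mirrors the proof of Prop.~\ref{thm:tdeuxstardecroissante}. Throughout, I view $\Upsilon(t,s)=\mu(s)t^2-2\nu(s)t+\mu(-s)$ as a quadratic in $t$ whose two roots are $\tun^\star(s)$ and $\tun^\diamond(s)$. I consistently read the variable of $\tun^\diamond$ in \eqref{eq:deftvstar} as $s$.

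\textbf{Discriminant and endpoint values.} The reduced discriminant is
\begin{equation*}
\nu(s)^2-\mu(s)\mu(-s)=9(1-s^2)^2-\bigl((1-s^2)^2-16s^2\bigr)=8(1-s^2)^2+16s^2 .
\end{equation*}
It is strictly positive on $[-1,1]$, so both formulas in \eqref{eq:deftvstar} are real and depend continuously on $s$ wherever their denominators do not vanish. The denominator of $\tun^\star$, namely $\sqrt{\cdot}+\nu(s)$, is positive because $\nu\geq0$ on $[-1,1]$. At $s=\pm1$ one has $\nu=0$, $\sqrt{\cdot}=4$, $\mu(1)=4$ and $\mu(-1)=-4$. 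This gives
\begin{equation*}
\tun^\star(-1)=1,\quad \tun^\star(1)=-1,\quad \tun^\diamond(-1)=-1,\quad \tun^\diamond(1)=1 .
\end{equation*}

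\textbf{Monotonicity.} Strict monotonicity of $\tun^\star$ on $[-1,1]$, combined with these endpoint values and continuity, yields the first item.

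For the derivative I would avoid differentiating square roots. Instead I use the implicit relation $\Upsilon(\tun^\star(s),s)=0$, which gives
\begin{equation*}
(\tun^\star)'(s)=-\frac{\partial_s\Upsilon}{\partial_t\Upsilon}(\tun^\star(s),s).
\end{equation*}
Here $\partial_t\Upsilon=2\mu(s)t-2\nu(s)=-2\sqrt{\nu^2-\mu(s)\mu(-s)}$ at the root $\tun^\star$, which is never zero. The task then reduces to a sign statement on $\partial_s\Upsilon(t,s)=-2ts^2\cdots$, evaluated only along the curve $\Upsilon=0$. There I would eliminate one variable using $\Upsilon=0$ (as was done with $(\kappa/\eta)'$) to obtain a manifestly signed expression.

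An alternative route is to mimic the earlier factorisation $\tun^\star=\varphi(\cdot)$ with $\varphi$ increasing. This requires dividing by $\nu$ on $(-1,1)$ and writing $\tun^\star=\frac{\mu(-s)/\nu}{\sqrt{(\nu^2-\mu\mu(-s))/\nu^2}+1}$. However, $\mu(s)\neq\mu(-s)$, so this is not a one-variable function of a single ratio, and I expect the implicit-derivative route to be cleaner.

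For $\tun^\diamond$, the same implicit formula applies with $\partial_t\Upsilon=+2\sqrt{\cdot}$. The sign is handled in the same way on each interval where $\tun^\diamond$ is finite.

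\textbf{Pole and bijectivity on the two branches.} On each branch, bijectivity onto $]-\infty,-1]$, respectively $[1,+\infty[$, follows from three ingredients:
\begin{itemize}
\item monotonicity on the branch;
\item the endpoint values $\mp1$ at $s=\mp1$;
\item the limits $\mp\infty$ at the singular point.
\end{itemize}
The limits come from the numerator $\sqrt{\cdot}+\nu$ being positive while the denominator of $\tun^\diamond$ tends to $0$ with the appropriate sign.

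The main obstacle is precisely this singular point. It is the zero in $(-1,1)$ of the denominator of $\tun^\diamond$, and it must coincide with the point $\sqrt5-2$ in the statement and in the table of variations. So I would first recompute carefully which of $\mu(s)$ and $\mu(-s)$ vanishes at $\sqrt5-2$, checking against the sign table above. I would then place the branch split of $\tun^\diamond$ exactly there, and verify the one-sided signs of that denominator to get $-\infty$ on the left branch and $+\infty$ on the right branch. Any mismatch in this identification would have to be resolved before the limits, and hence the bijectivity claims, can be asserted.
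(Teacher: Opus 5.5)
The decisive step is missing. You correctly reduce the sign of $(\tun^\star)'(s)$ to that of $\partial_s\Upsilon(\tun^\star(s),s)$, since $\partial_t\Upsilon=-2\sqrt{8+8s^4}$ at that root. But you never determine this sign. ``Eliminate one variable to obtain a manifestly signed expression'' is a promise, not an argument, and it is not a formality. Indeed $\partial_s\Upsilon(t,s)=(4-2s)t^2+12st-(4+2s)$ (there is no $ts^2$ term) equals $8s$ at $t=1$ and $-16s$ at $t=-1$. It therefore has no fixed sign on the square, and you would first have to localize $\tun^\star(s)$, which is essentially the statement itself.

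The paper closes this step by differentiating $\tun^\star$ in closed form. Incidentally, the printed expression does not agree with direct differentiation: at $s=-1$ it gives $-9/4$, whereas the slope is $-1$. Its sign conclusion is nevertheless correct.

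A derivative-free closure fits your setting better:
\begin{itemize}
\item For $|s|<1$ one has $\Upsilon(-1,s)=8(1-s^2)>0>-4(1-s^2)=\Upsilon(1,s)$, so $\Upsilon(\cdot,s)$ has exactly one root in $(-1,1)$.
\item Distinguish $\mu(s)>0$ (convex, take the smaller root), $\mu(s)<0$ (concave, take the larger root) and $\mu(s)=0$ (linear). In each case this root is $\tun^\star(s)$, while $\tun^\diamond(s)$ lies outside $[-1,1]$ with the sign of $\mu(s)$.
\item Since $(\tdeux^\star)^{-1}(s)$ is also a root in $(-1,1)$, Prop.~\ref{thm:tdeuxstardecroissante} yields $\tun^\star=(\tdeux^\star)^{-1}$, a decreasing bijection of $[-1,1]$.
\end{itemize}

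The $\tun^\diamond$ items are likewise left at ``handled in the same way''. You also miss the identity on which the paper's proof rests. Because $\nu$ and $\nu^2-\mu(s)\mu(-s)=8+8s^4$ are even in $s$, we have $\tun^\diamond(s)=1/\tun^\star(-s)$. Monotonicity and the behaviour of both branches then follow immediately from the first item.

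This identity also settles the point you leave open, and not in the way you expect. The denominator $\mu(s)=1+4s-s^2$ of $\tun^\diamond$ vanishes at $2-\sqrt5$, whereas $\sqrt5-2$ is the zero of $\mu(-s)$, i.e.\ of $\tun^\star$. So your assertion that the pole must coincide with $\sqrt5-2$ is false. The split point in the second and third items has to be $2-\sqrt5$, consistent with the domain $[-1,1]\setminus\{2-\sqrt5\}$ in the definition of $\tun^\diamond$.

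With $\sqrt5-2$ these items are literally false. Indeed $\tun^\diamond$ blows up at $2-\sqrt5\in[-1,\sqrt5-2[$, and $\tun^\diamond(\sqrt5-2)=3$, so $]\sqrt5-2,1]$ is mapped onto $[1,3[$.

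With the corrected point, the limits $\mp\infty$ as $s\to(2-\sqrt5)^\mp$ follow from two facts. The numerator $\sqrt{8+8s^4}+\nu(s)$ is positive, and $\mu$ is negative on $[-1,2-\sqrt5[$ and positive on $]2-\sqrt5,1]$. A complete proof must state and prove this corrected version instead of leaving the identification pending.
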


\begin{proof}[Proof of Prop.~\ref{thm:tunstardecroissante}]
    As $\nu(-1)=\nu(1)=0$, $\mu(-1)=-4$, and $\mu(1)=4$ , we first have 
    \begin{equation*}
        \tun^\star(-1)=1,
        \quad
        \tun^\star(1)=-1,
        \quad
        \tun^\diamond(-1)=-1,
        \quad
        \tun^\diamond(1)=1.
    \end{equation*}
    For $s\in(-1,1)$, we have
    \begin{equation*}
    \tun^{\star\prime}(s) = - \frac{
        12(1-s+s^2)
    }{
        (\sqrt{8+8s^4}+3(1-s^2))^2
    } - \frac{
        16(1-s^2)(2-s+2s^2)
    }{
        (\sqrt{8+8s^4}+3(1-s^2))^2\sqrt{8+8s^4}
    } < 0,
    \end{equation*}
    what proves the first property. The second and third properties are immediately obtained after remarking that
    $\tun^\diamond(s) = 1/\tun^\star(-s)$.
\end{proof}

\subsubsection{Description of the stability area in \texorpdfstring{$\squarettinterior$}{the open square}}

The following proposition concerns the caracterization of the sign of $\Upsilon(t, s)$.

\begin{proposition}\label{thm:techlemtstar}
    The function $\Upsilon:[-1,1]\times[-1,1]\to\mathbb{R}$ defined by
    \begin{equation*}
        \Upsilon(t,s) = 
        % \kappa(t)(1-s^2)-2\eta(t)s,
        - t^2s^2 + 6ts^2 + 4t^2s - s^2 + t^2 - 4s - 6t + 1 ,
    \end{equation*}
    satisfies the two properties
    \begin{gather*}
        \Upsilon(t, s) \geq 0
        \iff -1 \leq s \leq \tdeux^\star(t)
        \iff -1 \leq t \leq \tun^\star(s),\\
        \Upsilon(t, s) > 0
        \iff -1 \leq s < \tdeux^\star(t)
        \iff -1 \leq t < \tun^\star(s).
    \end{gather*}
\end{proposition}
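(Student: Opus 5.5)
The plan is to prove the two chains of equivalences one variable at a time. Each chain uses one of the two factorizations of $\Upsilon$ introduced above, together with the monotonicity results of Prop.~\ref{thm:tdeuxstardecroissante} and Prop.~\ref{thm:tunstardecroissante}.

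\emph{Characterization in $s$ for fixed $t$.} I would use $\Upsilon(t,s)=\kappa(t)(1-s^2)-2\eta(t)s$. On the boundary of the square:
\begin{itemize}
\item at $s=-1$ we get $\Upsilon(t,-1)=2\eta(t)\geq 0$, so $s=-1$ always lies in the set $\{\Upsilon\ge0\}$;
\item at $t=\pm1$ we have $\eta=0$, and $\Upsilon(\pm1,s)=\kappa(\pm1)(1-s^2)$ with $\kappa(-1)=8>0$ and $\kappa(1)=-4<0$;
\item at $t=-1$, $\Upsilon\ge0$ holds on all of $[-1,1]=[-1,\tdeux^\star(-1)]$, and $\Upsilon>0$ holds exactly on $(-1,1)$.
\end{itemize}
This last point shows that at the corners the strict statement needs care. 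I will have to check the convention at $s=-1$ against $2\eta(t)>0$ for $|t|<1$; at the endpoints $t=\pm1$ I will treat the degenerate cases directly.

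For $t\in(-1,1)$ we have $\eta(t)>0$. I would split according to the sign of $\kappa(t)$:
\begin{itemize}
\item If $\kappa(t)=0$, then $\Upsilon=-2\eta s$, which is $\ge0$ iff $s\le 0=\tdeux^\star(t)$.
\item If $\kappa(t)>0$, then $s\mapsto\Upsilon$ is a concave quadratic. It is positive at $s=-1$, and its roots are $\tdeux^\diamond(t)<-1$ and $\tdeux^\star(t)\in(-1,1)$ by Prop.~\ref{thm:tdeuxstardecroissante}. Hence on $[-1,1]$ it is nonnegative exactly on $[-1,\tdeux^\star(t)]$ and positive exactly on $[-1,\tdeux^\star(t))$.
\item If $\kappa(t)<0$, then it is a convex quadratic with roots $\tdeux^\star(t)\in(-1,1)$ and $\tdeux^\diamond(t)>1$. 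It is negative strictly between the roots and positive at $-1$, which gives the same conclusion.
\end{itemize}
That the two roots of $\Upsilon(t,\cdot)$ are $\tdeux^\star,\tdeux^\diamond$ follows from the quadratic formula rationalized, using $\tdeux^\star\tdeux^\diamond=-1$.

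\emph{Characterization in $t$ for fixed $s$.} I would use $\Upsilon=\mu(s)t^2-2\nu(s)t+\mu(-s)$, the roots $\tun^\star(s),\tun^\diamond(s)$ with product $\mu(-s)/\mu(s)$, and Prop.~\ref{thm:tunstardecroissante}. Note that $\Upsilon(-1,s)=\mu(s)+2\nu(s)+\mu(-s)=8(1-s^2)\cdot\frac{1}{1}\ge0$; this should simplify to a nonnegative quantity, vanishing only at $s=\pm1$. I would then split on the sign of $\mu(s)$, with breakpoint $s=2-\sqrt5$:
\begin{itemize}
\item if $\mu(s)=0$, the polynomial is linear in $t$;
\item if $\mu(s)>0$, it is convex with $\tun^\diamond$ outside $[-1,1]$;
\item if $\mu(s)<0$, it is concave with $\tun^\diamond<-1$.
\end{itemize}
In each case the nonnegativity set in $[-1,1]$ is $[-1,\tun^\star(s)]$. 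Alternatively, and more cheaply, the second equivalence follows from the first: both $\tdeux^\star$ and $\tun^\star$ are decreasing bijections of $[-1,1]$ whose graphs are the same zero set of $\Upsilon$ in the square. So $\tun^\star=(\tdeux^\star)^{-1}$, and $s\le\tdeux^\star(t)\iff t\le\tun^\star(s)$ by monotonicity; the strict versions follow in the same way. I would use this route.

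\emph{Main obstacle.} The delicate part is the boundary behaviour at the corners $t=\pm1$, $s=\pm1$, where $\eta$ or $\nu$ vanishes and the quadratic degenerates. There the strict inequality $\Upsilon>0$ must be matched against the half-open intervals. I would handle these four edges by direct evaluation, using $\Upsilon(t,\pm1)=\mp2\eta(t)$ and $\Upsilon(\pm1,s)=\kappa(\pm1)(1-s^2)$. The possible failure I must check is at $(t,s)=(-1,\pm1)$, where $\Upsilon=0$ while $s<\tdeux^\star(-1)=1$; the statement presumably intends the interior or a convention there.
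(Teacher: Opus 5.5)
Your route is the paper's: fix $t$, split on the sign of $\kappa(t)$ (concave, linear or convex in $s$), identify the roots as $\tdeux^\star(t)$ and $\tdeux^\diamond(t)=-1/\tdeux^\star(t)$, and place them with Prop.~\ref{thm:tdeuxstardecroissante}. You then transfer to the $t$-chain through $\tun^\star=(\tdeux^\star)^{-1}$ and monotonicity. For $t\in(-1,1)$ your case analysis is correct.

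\textbf{The gap is at the corners.} The issue is not just one of care: the statement is false at exactly two corners, and you located only one.
\begin{itemize}
\item You are right that the strict chain fails at $(t,s)=(-1,-1)$. There $\Upsilon(-1,s)=8(1-s^2)$ vanishes, while $-1\le-1<\tdeux^\star(-1)=1$ holds. The point $(-1,1)$ is not a counterexample, since $1<\tdeux^\star(-1)$ is false.
\item You did not flag that the non-strict chain fails at $(1,1)$. Here $\Upsilon(1,s)=-4(1-s^2)\ge0$ exactly for $s\in\{-1,1\}$, whereas $[-1,\tdeux^\star(1)]=\{-1\}$. If you push your convex case to $t=1$, this is the branch $s\ge\tdeux^\diamond(t)$, which re-enters the square because $\tdeux^\diamond(1)=1$.
\end{itemize}
The paper's proof has the same blind spot. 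Its concave case at $t=-1$ yields strict positivity only on $(\tdeux^\diamond(-1),\tdeux^\star(-1))=(-1,1)$, and its convex case at $t=1$ retains $s\ge\tdeux^\diamond(1)=1$.

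Once the edges are checked, your argument proves the non-strict chain on $[-1,1]^2\setminus\{(1,1)\}$ and the strict chain on $[-1,1]^2\setminus\{(-1,-1)\}$. That is all Prop.~\ref{thm:stabsquare} needs, since it only uses the open square.

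\textbf{The same corners break the inverse-function step.} You say the graphs of $\tdeux^\star$ and $\tun^\star$ are ``the same zero set of $\Upsilon$ in the square.'' That zero set is actually the graph of $\tdeux^\star$ together with $(-1,-1)$ and $(1,1)$. Instead, prove $\tun^\star\circ\tdeux^\star=\mathrm{id}$ on $(-1,1)$. For $t,s\in(-1,1)$ the second roots $\tdeux^\diamond(t)$ and $\tun^\diamond(s)$, when they exist, lie outside $[-1,1]$ by Props.~\ref{thm:tdeuxstardecroissante} and \ref{thm:tunstardecroissante}. So each fibre of the zero set meets the square in exactly one point. Then check the endpoints directly using $\tdeux^\star(\pm1)=\mp1=\tun^\star(\pm1)$.
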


\begin{proof}[Proof of Prop.~\ref{thm:techlemtstar}]
A direct investigation of the quadratic polynomial $\Upsilon_{\!t}:s\to\Upsilon(t, s)$ for $t\in[-1,1]$ yields to three properties: 
\begin{itemize}
    \item $\Upsilon_{\!t}$ is strictly concave for $t\in[-1,3{-}2\sqrt{2}[$, linear decreasing for $t=3{-}2\sqrt{2}$, and strictly convex for $t\in]3{-}2\sqrt{2},1]$;
    \item if $t\neq 3{-}2\sqrt{2}$, the two roots of $\Upsilon_{\!t}$ are $\tdeux^\star(t)$ and $\tdeux^\diamond(t)$;
    \item if $t= 3{-}2\sqrt{2}$, the only root of $\Upsilon_{\!t}$ is $\tdeux^\star(t)=0$.
\end{itemize}
Combining these properties, we obtain first that, for $-1\leq t<3{-}2\sqrt{2}$, $\Upsilon_{\!t}(s)\geq0$ if, and only if, $\tdeux^\diamond(t)\leq s\leq\tdeux^\star(t)$, second that, for $t=3{-}2\sqrt{2}$, $\Upsilon_{\!t}(s)\geq0$ if, and only if, $s\leq\tdeux^\star(t)$, and third that, for $3{-}2\sqrt{2}<t\leq1$, $\Upsilon_{\!t}(s)\geq0$ if, and only if, $s\leq\tdeux^\star(t)$ or $s\geq\tdeux^\diamond(t)$. The table of variations of $\tdeux^\star$ and $\tdeux^\diamond$ finally ends the proof for the inequalities between $s$ and $\tdeux^\star(t)$.
Composing these inequalities with the decreasing function $\tun^\star$ yields to the second ones as $\tun^\star\circ\tdeux^\star$ is the identity function of $[-1,1]$.
\end{proof}

\begin{proposition}[stability on $\squarettinterior$]
    \label{thm:stabsquare}
    Assuming $\tun\in(-1,1)$ and $\tdeux\in(-1,1)$, we have
    \begin{itemize}
        \item for $c\in\lbrace0,1\rbrace$, the lattice Boltzmann scheme is stable;
        \item for $0<c<1$, the lattice Boltzmann scheme is stable if, and only if the centered relaxation parameters satisfy $-1<\tdeux\leq\tdeux^\star(\tun)$ (or the equivalent relation $-1<\tun\leq\tun^\star(\tdeux)$).
    \end{itemize}
\end{proposition}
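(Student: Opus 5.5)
On $\squarettinterior$ we have $\vert\tun\tdeux\vert<1$, so Lemma~\ref{thm:P3I} gives $\polyPt\in\ensinegalite$ for every $(c,\xi)\in\squarecxi$. Lemma~\ref{thm:P2csI} gives $\polyPdcs\in\ensinegalite$ as well. By Theorems~\ref{thm:strikwerda432} and~\ref{thm:strikwerda437} (branch (a) twice), membership of $\polyPt$ in $\svNp$, respectively $\vNp$, therefore reduces to membership of the degree-one polynomial $\polyPucscs=\cohnschur(\polyPdcs)$, given in \eqref{eq:ann_defP1cscs}. Write $\polyPucscs=\alphaucscsu X+\alphaucscsz$. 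This degree-one polynomial is in $\svNp$ (equivalently in $\vNp$) iff $\vert\alphaucscsz\vert\le\vert\alphaucscsu\vert$ with $\alphaucscsu\neq0$. Here $\alphaucscsu=\polycdcs>0$ (leading coefficient of $\cohnschur$, as noted after the definition of $\ensinegalite$), so exact degree $1$ holds automatically.

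Since $\svNp$ and $\vNp$ coincide here, Proposition~\ref{thm:sufficientcond} gives stability whenever $\vert\alphaucscsz\vert\le\alphaucscsu$ for all $(c,\xi)\in\squarecxi$. Conversely, if this fails at some $(c,\xi)$, then $\polyPt\notin\vNp$, so $\matG$ has an eigenvalue of modulus $>1$, the minimal polynomial is not von Neumann, and the scheme is unstable. By Property~\ref{thm:symprop}, $(c,\xi)\in\squarecxi$ covers all cases. So the proposition reduces to studying the sign of the real quantity
\begin{equation*}
D(\xi,c,\tun,\tdeux)=\alphaucscsu^2-\vert\alphaucscsz\vert^2 .
\end{equation*}

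\textbf{Main computation.} I will factor $D$ with SageMath. I expect, as in the factorizations of Lemma~\ref{thm:P2csE}, that
\begin{equation*}
D=(1-\cos\xi)\,c^2(1-c^2)\,Q(\xi,c,\tun,\tdeux)\,\Pi,
\end{equation*}
where $\Pi$ is a product of strictly positive factors such as $(1-\tun^2)$, $(1-\tdeux^2)$, $(1-\tun\tdeux)$, and $Q$ is affine in a suitable combination of $c^2$ and $\cos\xi$.

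For $c\in\{0,1\}$ the prefactor gives $D=0$, hence $\vert\alphaucscsz\vert=\alphaucscsu$. Then $\polyPt$ has its roots with moduli fixed by the three Cohn--Schur steps, and they lie in $\svNp$ since the first two steps were strict. This gives stability in both cases, proving the first bullet. I will double-check $c=0$ and $c=1$ directly; for instance at $c=0$ the matrix $\matG$ decouples.

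For $0<c<1$ the sign of $D$ is that of $Q$, which must be nonnegative for all $\xi\in(0,\pi]$ and all $c\in(0,1)$. I will show that $Q$ is monotone, or concave/linear, in $c^2$ and in $\cos\xi$, so its infimum is reached on the boundary of $\squarecxi$. The expectation is that the binding limit is $c\to0$, $\xi\to\pi$ (or the edge $\xi=\pi$), where $Q$ reduces to a positive multiple of $\Upsilon(\tun,\tdeux)$. Proposition~\ref{thm:techlemtstar} then turns $\Upsilon\ge0$ into $-1<\tdeux\le\tdeux^\star(\tun)$, equivalently $-1<\tun\le\tun^\star(\tdeux)$.

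Necessity goes as follows. If $\Upsilon(\tun,\tdeux)<0$, continuity gives $Q<0$ for small $c>0$ and $\xi$ near $\pi$. Since $c$ is fixed, I need the negativity for every fixed $c\in(0,1)$, i.e. that the minimum over $\xi$ is $\Upsilon$ times a positive function of $c$. I will try to write
\begin{equation*}
Q=(\text{positive in }c,\xi)\cdot\Upsilon+(1+\cos\xi)(\text{nonnegative}),
\end{equation*}
so that $\xi=\pi$ is exactly the worst case for every $c$.

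\textbf{Expected obstacle.} The hardest step is this factorization and sign analysis of $Q$: identifying $\Upsilon$ as the exact minimizing coefficient, uniformly in $c\in(0,1)$, and justifying that the interior of $\squarecxi$ never yields a smaller value. This is why the area is independent of $c$, as observed after Theorem~\ref{thm:main}. If a clean decomposition is not found, I will instead treat $Q$ as a polynomial in $x=\cos\xi\in[-1,1)$, compute its derivative's sign, and reduce to checking endpoint values. All of this will be verified symbolically.
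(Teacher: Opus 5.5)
Your reduction matches the paper's. On the open square, $\polyPt$ and $\polyPdcs$ both lie in $\ensinegalite$, so everything reduces to the degree-one polynomial $\polyPucscs$. Its leading coefficient is the positive quantity of Lemma~\ref{thm:P2csI}, and the prefactor $c^2(1-c^2)$ settles $c\in\{0,1\}$. The gap is in the case $0<c<1$, which is the whole content of the proposition: there you put the critical frequency at the wrong end and leave the hard direction unproved.

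\textbf{The critical frequency is $\xi\to0^+$, not $\xi=\pi$.} The actual factorization is $D=c^2(1-c^2)(1+\tdeux)(1-\tun^2\tdeux^2)(1-\cos\xi)^2\,\polycubarcscs$, with
\[
\polycubarcscs=\alpha_0(1-\cos^2\xi)+\tfrac12\alpha_-(1-\cos\xi)+\tfrac12\alpha_+(1+\cos\xi).
\]
At $\xi=\pi$ this equals $\alpha_-$. That is a positive multiple of the square of $(1-\tun)(1-\tun\tdeux)(1-c^2)+(1+\tun)(1+\tun\tdeux)c^2>0$, whatever the sign of $\Upsilon$. So nothing happens near $\xi=\pi$, and a decomposition of the form ``positive$\cdot\Upsilon+(1+\cos\xi)(\text{nonnegative})$'' cannot exist. $\Upsilon$ enters at the other end: $\polycubarcscs(0,c,\tun,\tdeux)=\alpha_+=(1-\tdeux)(1-\tun^2)(1-\tun\tdeux)^2\,\Upsilon(\tun,\tdeux)$, which does not depend on $c$. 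With this, necessity is one line and automatically uniform in $c$. If $\Upsilon<0$, continuity gives $\polycubarcscs<0$, hence $D<0$, for all small $\xi>0$.

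\textbf{Sufficiency is not argued.} Your claim that $\polycubarcscs$ is ``monotone or concave in $c^2$ and $\cos\xi$, so the infimum is on the boundary'' is an assertion, not a mechanism. The coefficient of $\cos^2\xi$ is $-\alpha_0$, which has no fixed sign. For instance, for $\tun=0$ and $\tdeux\in(-1,\sqrt5-2]\setminus\{0\}$ (inside the claimed stable region) one has $\alpha_0=-\tdeux^2(1+\tdeux)(1-c^2)<0$. Then $\polycubarcscs$ is strictly convex in $\cos\xi$ and could a priori dip below zero between two positive endpoint values. Showing that it does not, uniformly in $c\in(0,1)$ and over the whole region $\Upsilon\ge0$, is essentially the proposition itself. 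Your fallback (sign of the derivative plus endpoint values) would need control of the vertex value, i.e.\ of the discriminant, which you do not address.

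\textbf{How the paper closes this.} It argues by continuation in $c$:
\begin{itemize}
\item At $c=0$, $\polycubarcscs$ factors into two affine functions of $\cos\xi$, both positive on $[-1,1]$ when $\Upsilon>0$. Hence both roots lie outside $[-1,1]$.
\item For all $c\in[0,1]$ the endpoint values $\alpha_\pm$ stay positive.
\item The discriminant $\Delta=\Delta_1^2\Delta_2$ stays nonnegative. Here $\Delta_2$, quadratic in $c^2$, is bounded below by $\min(\delta_{20},\delta_{20}+\delta_{22}+\delta_{24},\delta_{20}+\tfrac12\delta_{22})$. The sign of the last term reduces to that of the auxiliary quadratic $\Lambda$ on $-1<\tun<\tun^\star(\tdeux)$.
\item So the roots remain real and can never cross $\pm1$.
\item The boundary case $\tdeux=\tdeux^\star(\tun)$ then follows by continuity.
\end{itemize}
Some argument of this kind, controlling the possibly convex interior behaviour uniformly in $c$, is the missing idea.
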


\begin{proof}[Proof of Prop.~\ref{thm:stabsquare}]
    In this proof, we assume that $\vert\tun\vert<1$ and $\vert\tdeux\vert<1$, Lemma~\ref{thm:P3I} implies that $\polyPt(\cdot;\xi,c,\tun,\tdeux)\in\ensinegalite$ and Lemma~\ref{thm:P2csI} implies that $\polyPdcs(\cdot;\xi,c,\tun,\tdeux)\in\ensinegalite$. Then Theorems~\ref{thm:strikwerda437} and \ref{thm:strikwerda432} reduce to
    \begin{gather*}
        \polyPt(\cdot;\xi,c,\tun,\tdeux) \in\svNp
        \iff
        \polyPdcs(\cdot;\xi,c,\tun,\tdeux) \in\svNp
        \iff
        \polyPucscs(\cdot;\xi,c,\tun,\tdeux) \in\svNp,
        \\
        \polyPt(\cdot;\xi,c,\tun,\tdeux) \in\vNp
        \iff
        \polyPdcs(\cdot;\xi,c,\tun,\tdeux) \in\vNp
        \iff
        \polyPucscs(\cdot;\xi,c,\tun,\tdeux) \in\vNp.
    \end{gather*}
    As $\polyPucscs$ is a polynomial of degree one, being a simple von Neumann polynomial or a von Neumann polynomial is exactly the same property. Then we have proven that
    \begin{multline*}
        \polyPt(\cdot;\xi,c,\tun,\tdeux) \in\svNp
        \iff
        \polyPt(\cdot;\xi,c,\tun,\tdeux) \in\vNp
        \iff
        \polyPucscs(\cdot;\xi,c,\tun,\tdeux) \in\vNp,
        \\
        (\tun,\tdeux)\in\squarettinterior,
        \ (c,\xi)\in\squarecxi.
    \end{multline*}
    We now investigate the property $\polyPucscs(\cdot;\xi,c,\tun,\tdeux)\in\vNp$.
    The expression of the polynomial $\polyPucscs$ is given in \eqref{eq:ann_defP1cscs}. We then evaluate 
    $\polycucscs(\xi,c,\tun,\tdeux) = \vert\alphaucscsu\vert^2-\vert\alphaucscsz\vert^2$ and $\polyPucscs\in\vNp$ if, and only if, $\polycucscs\geq0$.
    This polynomial admits the factorization
    \begin{equation*}
        \polycucscs(\xi,c,\tun,\tdeux) = 
        c^2(1-c^2)(1+\tdeux)(1-\tun^2\tdeux^2)(1-\cos\xi)^2 
        \polycubarcscs(\xi,c,\tun,\tdeux),
    \end{equation*}
    where $\polycubarcscs$ is a polynomial of degree two according to the variable $\cos\xi$. It reads
    \begin{equation*}
        \polycubarcscs(\xi,c,\tun,\tdeux) = \alpha_0 (1-\cos^2\xi) + \tfrac{1}{2}\alpha_-(1-\cos\xi) + \tfrac{1}{2}\alpha_+(1+\cos\xi),
    \end{equation*}
    with
    \begin{align*}
    \alpha_0 &= (c^2\tun^2(1-\tdeux^2)^2 - (\tun+\tdeux)^2(1-\tun\tdeux)^2) \\
    &\qquad\qquad
    ((1+\tdeux)(1-\tun^2)(1-c^2) + 4\tun(1-\tdeux)), \\
    \alpha_+ &= (1-\tdeux)(1-\tun^2)(1-\tun\tdeux)^2 \Upsilon(\tun,\tdeux), \\
    \alpha_- &= (1+\tdeux)(1-\tun^2)(1-\tdeux)^2(
    (1-\tun)(1-\tun\tdeux)+2c^2\tun(1+\tdeux)
    )^2,
    \end{align*}
    First, we obtain that, for $c\in\lbrace0,1\rbrace$, we have $\polycucscs(\xi,c,\tun,\tdeux) = 0$ and that the scheme is stable. Second, for $0<c<1$, we have
    \begin{equation*}
        \polycucscs(\xi,c,\tun,\tdeux)\geq0
        \iff
        \xi=0
        \text{ or }
        \polycubarcscs(\xi,c,\tun,\tdeux) \geq0,
    \end{equation*}
    as $\vert\tun\vert<1$ and $\vert\tdeux\vert<1$.
    Observing that the target condition $-1<\tdeux\leq\tdeux^\star(\tun)$ does not depend on $c$, we investigate the sign of $\polycubarcscs(\xi,c,\tun,\tdeux)$ first in the case $c=0$ even if this case is already covered for the stability. For $c=0$, $\polycubarcscs(\xi,0,\tun,\tdeux)$ reads
    \begin{equation*}
        \polycubarcscs(\xi,0,\tun,\tdeux) = 
        (1-\tun\tdeux)^2 (\gammabarucscsz + \gammabarucscsu\cos\xi)(\gammaucscsz + \gammaucscsu\cos\xi),
    \end{equation*}
    where $\gammabarucscsz$, $\gammabarucscsu$, $\gammaucscsz$, and $\gammaucscsu$ only depend on $\tun$ and $\tdeux$. The term $\gammabarucscsz + \gammabarucscsu\cos\xi$ is positive for $\xi\in[0,\pi]$ by linearity as 
    \begin{equation*}
        \gammabarucscsz - \gammabarucscsu = (1+\tun)(1+\tdeux) > 0,
        \qquad
        \gammabarucscsz + \gammabarucscsu = (1-\tun)(1-\tdeux) > 0.
    \end{equation*}
    Moreover, we have
    \begin{equation*}
        \gammaucscsz - \gammaucscsu = 
        (1-\tun)^3(1-\tdeux)^2 > 0,
        \qquad
        \gammaucscsz + \gammaucscsu =
        (1+\tun)\Upsilon(\tun,\tdeux),
    \end{equation*}
    where the function $\Upsilon$ is the one defined in Prop.~\ref{thm:techlemtstar}. As an intermediate result, we obtain for $(\tun,\tdeux)\in\squarettinterior$ that
    \begin{equation*}
        \forall\xi\in[0,\pi] \ \polycubarcscs(\xi,0,\tun,\tdeux)\geq0
        \iff
        \Upsilon(\tun,\tdeux)\geq0 
        \iff -1 < \tdeux \leq \tdeux^\star(\tun).
    \end{equation*}
    Finally, we have to prove that the result remains the same for all the values of $c\in[0,1]$. 
    
    We begin the proof by assuming that $\tdeux<\tdeux^\star(\tun)$. We use the fact that the function $\polycubarcscs(\xi,c,\tun,\tdeux)$ is a polynomial of order 2 in the variable $\cos\xi$ which coefficients are polynomials of order 2 in the variable $c^2$. For $c=0$, $\polycubarcscs(\xi,0,\tun,\tdeux)$ has two real roots in $(-\infty,-1)\cup(1,+\infty)$. We prove that the discriminant of $\polycubarcscs(\xi,c,\tun,\tdeux)$ is non negative for $0\leq c\leq 1$ and that $\polycubarcscs(0,c,\tun,\tdeux)>0$ and $\polycubarcscs(\pi,c,\tun,\tdeux)>0$ for $0\leq c\leq 1$. So it implies that, being given a value $0\leq c\leq 1$, the two roots of $\polycubarcscs(\xi,c,\tun,\tdeux)$ are real and remain in $(-\infty,-1)\cup(1,+\infty)$ by using the continuity. First, asuming $-1<\tun<1$ and $-1<\tdeux<\tdeux^\star(\tun)$,
    \begin{align*}
    \polycubarcscs(0,c,\tun,\tdeux) &=
    (1-\tdeux) (1-\tun^2)(1-\tun\tdeux)^2 \Upsilon(\tun,\tdeux) > 0,\\
    \polycubarcscs(\pi,c,\tun,\tdeux) &=
    (1-\tun^2)(1+\tdeux)(1-\tdeux)^2 \\
    & \qquad
    %(2c^2\tun(1+\tdeux)+(1-\tun)(1-\tun\tdeux))^2
    (
        (1-\tun)(1-\tun\tdeux) (1-c^2)
        + (1+\tun)(1+\tun\tdeux) c^2
    )^2 > 0.
    \end{align*}
    Second, the discriminant $\Delta$ of $\polycubarcscs(\xi,c,\tun,\tdeux)$ reads $\Delta = \Delta_1^2 \Delta_2^{}$ with
    \begin{align*}
        \Delta_1^{} &= 2(\tun^3\tdeux + 2\tun^2\tdeux^2 - \tun^2\tdeux + \tun\tdeux - 2\tun - \tdeux),\\
        \Delta_2^{} &= \delta_{20} + \delta_{22}c^2 + \delta_{24}c^4.
    \end{align*}
    We then prove that $\Delta_2^{}\geq0$ in order to conclude that $\Delta\geq0$. We investigate the sign of $\Delta_2^{}$ as a polynomial of degre 2 in the variable $c^2$. 
    % The discriminant $\delta$ of this polynomial reads
    % \begin{equation*}
    %     \delta = (1-\tun)^2 (1+\tun)^4 (1-\tdeux^2)^3 (1-\tun\tdeux)^4 \Upsilon(\tun,\tdeux) > 0
    % \end{equation*}
    % with the assumptions $-1<\tun<1$ and $-1<\tdeux<\tdeux^\star(\tun)$. 
    We observe that the value of $\Delta_2^{}(c^2)$ is bounded by the chord at $c^2=0$ and $c^2=1$ and the two tangents to the curve at the same points. That is
    \begin{equation*}
        \Delta_2^{}(c^2) \geq
        \min\bigl(
            \delta_{20},
            \delta_{20}+\delta_{22}+\delta_{24},
            \delta_{20}+\tfrac{1}{2}\delta_{22}
        \bigr).
    \end{equation*}
    % point d'intersection des tangentes
    % tangente en c=0 : d0 + xi d2
    % tangente en c=1 : d0+d2+d4 + (d2+2d4) (xi-1)
    %                   = d0-d4 + xi (d2+2d4)
    % d0 + xi d2 = d0-d4 + xi (d2+2d4)
    % <=> xi = 1/2
    % valeur au point d'intersection
    % d0 + d2/2
    We have
    \begin{align*}
        \delta_{20} &=
        4(\tun + \tdeux)^2 (1-\tun\tdeux)^4 \geq 0, \\
        \delta_{20}+\delta_{22}+\delta_{24} &=
        (1-\tun^2)^2(1-\tdeux)^2(1+\tun\tdeux^2)^2 \geq 0,\\
        2\delta_{20}+\delta_{22} &=
        (1-\tdeux)(1-\tun^2)(1-\tun\tdeux)^2 \Lambda(\tun,\tdeux),
    \end{align*}
    where $\Lambda$ reads as a polynomial of degree 2 in the variable $\tun$ 
    \begin{equation*}
        \Lambda(\tun,\tdeux) =
        (\tdeux^3-\tdeux^2+3\tdeux+1) \tun^2
        - 2(1+\tdeux)(\tdeux^2-4\tdeux+1) \tun
        + (\tdeux^3+3\tdeux^2-\tdeux+1).
    \end{equation*}
    We prove that the polynomial $\Lambda$ is non negative under the assumptions $-1<\tun<1$ and $-1<\tdeux<\tdeux^\star(\tun)$.
    We compute the discriminant of $\Lambda$
    \begin{equation*}
        \Delta_{\Lambda} = -16\tdeux(1-\tdeux)^2(2+3\tdeux+2\tdeux^2).
    \end{equation*}
    Then, $\Delta_{\Lambda}$ has the opposite sign to $\tdeux$. 
    First case, $0\leq\tdeux<1$: $\Delta_\Lambda\leq0$ implies that $\Lambda(\tun,\tdeux)$ is non negative as $\Lambda(0,\tdeux)=1-\tdeux + \tdeux^2(3+\tdeux)>0$. 
    Second case, $-1<\tdeux<0$: as $\Delta_\Lambda>0$, the polynomial $\Lambda$ has two distinct real roots. As
    \begin{align*}
        \Lambda(-1, \tdeux) &= 4(1+\tdeux)(1-\tdeux)^2 > 0,\\
        \Lambda(1, \tdeux) &= 8 \tdeux(1+\tdeux) < 0,
    \end{align*}
    there is exactly one sign change of the polynomial $\Lambda$ on the interval $[-1,1]$. We write $\tun^{\square}(\tdeux)$ for the zero of $\Lambda$ that lies in $(-1,1)$ whenever $-1 < \tdeux < 0$. Thus, it suffices to show that 
    \begin{equation*}
    \Lambda(\tun^\star(\tdeux),\tdeux) \geq 0,
    \end{equation*}
    which in turn implies that $\Lambda$ is strictly positive on the interval we are concerned with, namely 
    $\tun\in(-1,\tun^\star(\tdeux))$. 
    A direct calculation yields
    {\footnotesize
    \begin{equation*}
        \Lambda(\tun^\star(\tdeux), \tdeux) = \frac{
        4(1+\tdeux)^3
        }{
            \bigl(
                \sqrt{8{+}8\tdeux^4}{+}3(1{-}\tdeux^2)
            \bigr)^2
        } \Bigl(
            3\tdeux^4{+}4\tdeux^3{-}2\tdeux^2{-}4\tdeux{+}3 
            -
            \sqrt{8{+}8\tdeux^4} (1{-}\tdeux{-}\tdeux^2)
        \Bigr).
    \end{equation*}}
    We remark that
    \begin{equation*}
        3\tdeux^4+4\tdeux^3-2\tdeux^2-4\tdeux+3 = 1+\tdeux^4 + 2(1-\tdeux-\tdeux^2)^2 > 0.
    \end{equation*}
    Then, we conclude that $\Lambda(\tun^\star(\tdeux), \tdeux)\geq0$ as
    {\footnotesize
    \begin{equation*}
        (3\tdeux^4+4\tdeux^3-2\tdeux^2-4\tdeux+3)^2 - 8(1+\tdeux^4) (1-\tdeux-\tdeux^2)^2 =
        (1-\tdeux^2)^2 (1+4\tdeux-\tdeux^2)^2 \geq0.
    \end{equation*}}
    At this point, we prove that 
    \begin{equation*}
        \polycubarcscs(\xi,c,\tun,\tdeux)
        \geq 0,
        \quad
        0\leq\xi\leq\pi, \ 
        0\leq c\leq1,\ 
        -1<\tun<1, \ 
        -1<\tdeux<\tdeux^\star(\tun).
    \end{equation*}
    For $\tdeux=\tdeux^\star(\tun)$, the continuity of the function $\polycubarcscs$ according to all its variables immediately implies that the inquality remains true for $-1<\tdeux\leq\tdeux^\star(\tun)$.
    We end the proof by assuming that $\tdeux>\tdeux^\star(\tun)$. Then $\Upsilon(\tun,\tdeux)<0$ implies
    \begin{equation*}
    \polycubarcscs(0,c,\tun,\tdeux) = 
    (1-\tdeux) (1-\tun^2)(1-\tun\tdeux)^2 \Upsilon(\tun,\tdeux) < 0.
    \end{equation*}
    As an final result, we obtain for $(\tun,\tdeux)\in\squarettinterior$ and $0<c<1$ that
    \begin{equation*}
        \forall\xi\in[0,\pi] \ \polycubarcscs(\xi,c,\tun,\tdeux)\geq0
        \iff
        \Upsilon(\tun,\tdeux)\geq0 
        \iff -1 < \tdeux \leq \tdeux^\star(\tun),
    \end{equation*}
    relation which ends the proof.
\end{proof}
%%%%%%%%%%%%%%%%%%%%%%%%%%%%%%%%%%%%
% stabilité sur les arêtes
%%%%%%%%%%%%%%%%%%%%%%%%%%%%%%%%%%%%
\subsection{Stability on \texorpdfstring{$\edgestt$}{the edges}}
%%%%%%%%%%%%%%%%%%%%%%%%%%%%%%%%%%%%

In this section, we consider the four edges $\edgestt^b$, $\edgestt^t$, $\edgestt^l$, and $\edgestt^r$. We prove that (see Fig.~\ref{fig:stability_edges}):
\begin{itemize}
    \item for $\vert\tun\vert<1$ and $\tdeux=-1$, the scheme is stable for $c\in[0,1]$ (see Prop.~\ref{thm:stabedgeb});
    \item for $\tun=-1$ and $\vert\tdeux\vert<1$, the scheme is stable for $c\in[0,1]$ (see Prop.~\ref{thm:stabedgel});
    \item for $\vert\tun\vert<1$ and $\tdeux=1$, the scheme is stable for $c=0$ and $c=1$ and is unstable for $c\in(0,1)$ (see Prop.~\ref{thm:stabedget});
    \item for $\tun=1$ and $\vert\tdeux\vert<1$, the scheme is stable for $c=0$ and $c=1$ and is unstable for $c\in(0,1)$ (see Prop.~\ref{thm:stabedger}).
\end{itemize}

\begin{figure}[htb]
    \includegraphics[width=.33\textwidth]{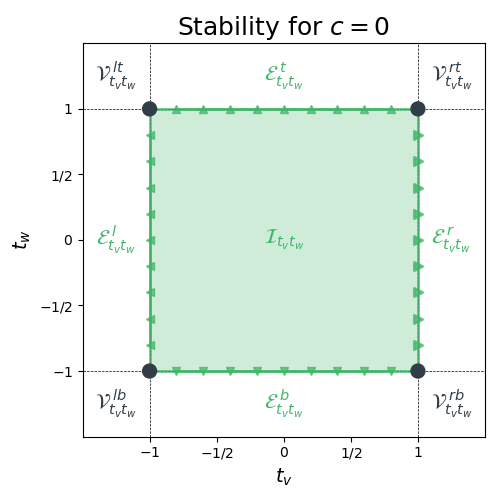}%
    \hfill%
    \includegraphics[width=.33\textwidth]{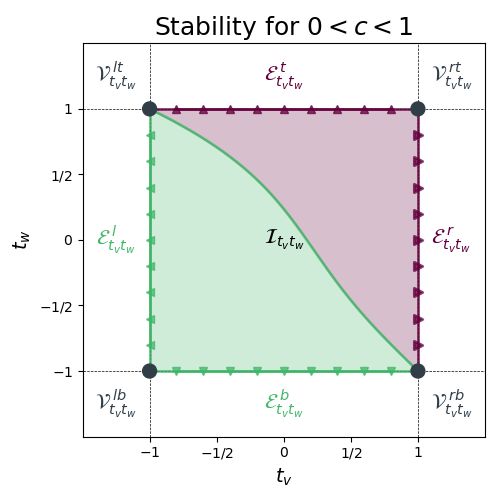}%
    \hfill%
    \includegraphics[width=.33\textwidth]{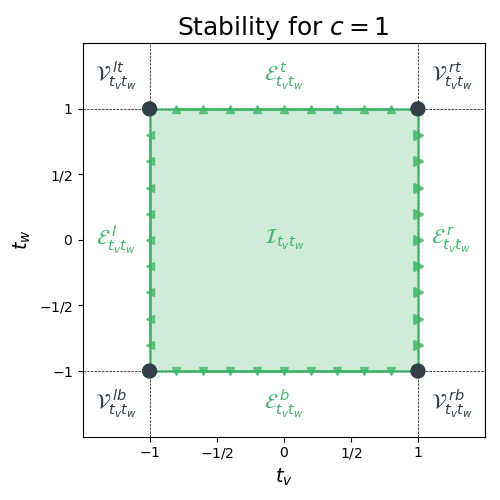}%
    \caption{Stability for $(\tun,\tdeux)\in\edgestt$. Green means stable while purple means unstable.}
    \label{fig:stability_edges}
\end{figure}

\begin{proposition}[stability on $\edgestt^b$]
    \label{thm:stabedgeb}
    Considering $\tun\in(-1,1)$ and $\tdeux=-1$, the lattice Boltzmann scheme is stable for every value $c\in[0,1]$.
\end{proposition}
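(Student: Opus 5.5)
The approach is to exploit a structural feature of the bottom edge rather than run the full Cohn--Schur chain on $\polyPt$. On $\edgestt^b$ we have $\tdeux=-1$, i.e. $\sdeux=0$. Then both $\mz$ and $\mdeux$ are left unchanged by the relaxation \eqref{eq:relaxation}, so $\fz=\mz-\tfrac{2}{\lambda^2}\mdeux$ is unchanged too. Since $\fz$ is not transported, $\fz^{n+1}_j=\fz^n_j$ exactly. This is visible on \eqref{eq:matGs}: with $\sdeux=0$ the first row of $\matG$ is $(1,0,0)$. Hence
\begin{equation*}
    \matG(\xi)=\begin{pmatrix} 1 & 0\\ b(\xi) & H(\xi)\end{pmatrix},
    \qquad
    b(\xi)=\tfrac{\sun c}{2}\begin{pmatrix} e^{-\i\xi}\\ -e^{\i\xi}\end{pmatrix},
\end{equation*}
with $H(\xi)$ the lower right $2\times2$ block. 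Consistently with this, one checks from \eqref{eq:defP3} that $X=1$ is always a root of $\polyPt$, and $\polyPt=(X-1)Q$ with
\begin{equation*}
Q(X)=X^2+\bigl(\cos\xi\,(\tun-1)+\i c(1+\tun)\sin\xi\bigr)X-\tun,
\end{equation*}
the characteristic polynomial of $H$. Powers are then
\begin{equation*}
\matG^n=\begin{pmatrix}1&0\\ \sum_{k<n}H^k b & H^n\end{pmatrix}.
\end{equation*}
Stability therefore splits into two uniform bounds: one on $H(\xi)^n$, and one on $\sum_{k<n}H(\xi)^k b(\xi)$.

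First, $H^n$. I would show $Q\in\svNp$ for all $\xi\in[0,\pi]$ and $c\in[0,1]$, then apply Prop.~\ref{thm:sufficientcond} (i.e. \cite[Prop.~3 and 4]{bellotti2025stability}) to the $2\times2$ block. Since $|Q(0)|=|\tun|<1$, we have $Q\in\ensinegalite$, and $\cohnschur(Q)$ is linear, so the test reduces to one explicit inequality between moduli of its two coefficients. I expect it to factor with a factor $(1-\tun^2)$ times a nonnegative expression in $\cos\xi$ and $c^2$, much as in Lemma~\ref{thm:P2csI}. The symmetries of Property~\ref{thm:symprop} keep us in the hypercube \eqref{eq:condinit}. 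We also have
\begin{equation*}
Q(1)=(1-\tun)(1-\cos\xi)+\i c(1+\tun)\sin\xi ,
\end{equation*}
which vanishes only at $\xi=0$. There $Q=(X-1)(X+\tun)$ has simple unimodular root $1$, so no multiplicity issue arises for $H$.

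Second, the off-diagonal sum; this is the main obstacle. For $\xi\neq0$, $1$ is not an eigenvalue of $H$, and
\begin{equation*}
\sum_{k<n}H^kb=(I-H)^{-1}(I-H^n)b .
\end{equation*}
Given the first bound, it suffices to show that $x(\xi)=(I-H(\xi))^{-1}b(\xi)$ stays bounded as $\xi\to0$. Note $\det(I-H)=Q(1)$ vanishes like $\i c(1+\tun)\xi$ (or like $\xi^2$ when $c=0$). I would compute $\operatorname{adj}(I-H)\,b$ explicitly and show it vanishes at $\xi=0$ to the same order, so that $x(\xi)$ extends continuously. The vector $(1,x(\xi))^T$ is the eigenvector of $\matG$ for eigenvalue $1$. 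Its continuity at $\xi=0$ reflects the fact that $\matG(0)$, written in moments, is diagonalizable: $\mz$ and $\mdeux$ are fixed, and $\mun$ relaxes towards $c\lambda\mz$ with factor $-\tun$, where $|\tun|<1$. If the cancellation turned out to be delicate, I would instead diagonalize $\matG$ near $\xi=0$ directly. Its eigenvalues are $1$, one eigenvalue of $H$ close to $1$, and one close to $-\tun$; I would show the eigenvector matrix has a bounded condition number by exhibiting the eigenvectors as continuous functions of $\xi$ that are independent at $\xi=0$.

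On the compact complement $\{\delta\le|\xi|\le\pi\}$, the spectral radius of $H$ is $<1$ except at isolated unimodular simple roots, which the first step controls, and $x(\xi)$ is continuous there. Together with the local analysis this yields a uniform bound $|\matG(\xi)^n|\le C$. The argument works for every $c\in[0,1]$ and $\tun\in(-1,1)$, which gives the statement.
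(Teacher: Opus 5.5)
Your proof is correct, and it takes a genuinely different route from the paper's. The paper never exploits the invariance of $\fz$. It runs the Cohn--Schur chain on the full cubic: Lemma~\ref{thm:P3I} gives $\polyPt\in\ensinegalite$, and Lemmas~\ref{thm:P2csI} and~\ref{thm:P2csE} split $\squarecxi$ according to whether $\polyPdcs$ lies in $\ensinegalite$ or $\ensegalite$. It then checks $\polyPucscs=(X-1)(1-\tun^2)^2(1-\cos^2\xi)(1-c^2)$ and $\polyPucsde=(1-\tun^2)(2X-1-\cos\xi+ic\sin\xi)$ explicitly, and finally applies Prop.~\ref{thm:sufficientcond} to $\matG$.

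Your block-triangular reduction needs only one Cohn--Schur step on $Q$. Indeed $\cohnschur(Q)=(1-\tun^2)(X-\cos\xi+ic\sin\xi)$, so the test reduces to $(1-\tun^2)^2(1-c^2)\sin^2\xi\geq0$, as you predicted. The resolvent computation also works out cleanly: $\operatorname{adj}(I-H)\,b=\tfrac{\sun c}{2}(e^{-i\xi}-1,\,1-e^{i\xi})^T$, hence
\begin{equation*}
    x(\xi)=\frac{-i\,\sun c}{2\bigl((2-\sun)\sin(\xi/2)+i\,c\,\sun\cos(\xi/2)\bigr)}
    \begin{pmatrix} e^{-i\xi/2}\\ e^{i\xi/2}\end{pmatrix}.
\end{equation*}
This is smooth on $[-\pi,\pi]$ for $c>0$, with $x(0)=-\tfrac12(1,1)^T$, while $b\equiv0$ for $c=0$.

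What your route buys is a sound treatment of $\xi=0$. There $\polyPt=(X-1)^2(X+\tun)$ has a double root on the unit circle, so $\polyPt\notin\svNp$ and Prop.~\ref{thm:sufficientcond} does not apply to $\matG$ as it stands. In the paper's chain this shows up as the root $(1+\cos\xi-ic\sin\xi)/2$ of $\polyPucsde$ being equal to $1$ at $\xi=0$. Case (b) of Theorem~\ref{thm:strikwerda432} requires $\polyPucsde\in\Sp$, not merely $\polyPucsde\in\svNp$ as the paper asserts. The uniform bound near $\xi=0$, which the paper's Cohn--Schur argument does not by itself establish, is exactly what your cancellation in $(I-H)^{-1}b$ provides. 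In exchange, the paper's route has the merit of reusing the same lemmas on all four edges.

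Two harmless imprecisions in your write-up:
\begin{itemize}
\item For $c=1$ one has $Q=(X-e^{-i\xi})(X+\tun e^{i\xi})$, so the unimodular eigenvalues of $H$ are not isolated in $\xi$. Your first step covers this case anyway.
\item At $\xi=0$ the identity $\sum_{k<n}H^kb=x-H^nx$ holds with $x(0)$ defined by continuity, since both sides are continuous in $\xi$ for fixed $n$.
\end{itemize}
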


\begin{proof}[Proof of Prop.~\ref{thm:stabedgeb}]
    In this proof, we assume that $\vert\tun\vert<1$ and $\tdeux=-1$. Lemma~\ref{thm:P3I} then implies that
    $\polyPt(\cdot;\xi,c,\tun,-1)\in\ensinegalite$ for $\xi\in[0,\pi]$, $c\in[0,1]$.
    Then, applying Th.~\ref{thm:strikwerda432}, $\polyPt(\cdot;\xi,c,\tun,-1)\in\svNp$ if, and only if $\polyPdcs(\cdot;\xi,c,\tun,-1)\in\svNp$.
    Lemmas~\ref{thm:P2csI} and \ref{thm:P2csE} read then
    \begin{align*}
        \polyPdcs(\cdot;\xi,c,\tun,-1)\in\ensinegalite &\iff (c,\xi)\in\squarecxiinterior\cup\edgescxi^l,\\
        \polyPdcs(\cdot;\xi,c,\tun,-1)\in\ensegalite &\iff (c,\xi) \in \edgescxi^b\cup\edgescxi^r\cup\edgescxi^t\cup\verticescxi.
    \end{align*}

    First, we consider $(c,\xi)\in\squarecxiinterior\cup\edgescxi^l$, we have
    \begin{equation*}
        \polyPucscs(X;\xi,c,\tun,-1) = 
        (X-1) (1-\tun^2)^2 (1-\cos^2\xi) (1-c^2) \in\svNp.
    \end{equation*}
    Second, we consider $(c,\xi)\in\edgescxi^{b}\cup\edgescxi^{r}\cup\edgescxi^{t}\cup\verticescxi$, we have
    \begin{equation*}
        \polyPucsde(X;\xi,c,\tun,-1) = (2X-1-\cos\xi+\i c\sin\xi)(1-\tun^2) \in \svNp.
    \end{equation*}
    We conclude that $\polyPt(\cdot;\xi,c,\tun,-1)\in\svNp$ for every $(c,\xi)\in\squarecxi$ and then that the scheme is stable for $c\in[0,1]$.
\end{proof}

\begin{proposition}[stability on $\edgestt^l$]
    \label{thm:stabedgel}
    Considering $\tun=-1$ and $\tdeux\in(-1,1)$, the lattice Boltzmann scheme is stable for every value $c\in[0,1]$.
\end{proposition}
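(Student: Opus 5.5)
The plan mirrors the proof of Prop.~\ref{thm:stabedgeb}, now with $\tun=-1$ and $\vert\tdeux\vert<1$. Since $\vert\tun\tdeux\vert<1$, Lemma~\ref{thm:P3I} gives $\polyPt(\cdot;\xi,c,-1,\tdeux)\in\ensinegalite$ for every $(c,\xi)\in\squarecxi$. Theorem~\ref{thm:strikwerda432} then reduces $\polyPt\in\svNp$ to $\polyPdcs(\cdot;\xi,c,-1,\tdeux)\in\svNp$. By Proposition~\ref{thm:sufficientcond}, showing this for all $(c,\xi)\in\squarecxi$ suffices for stability; the cases $c<0$ and $\xi<0$ follow from Property~\ref{thm:symprop}.

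Next I split $\squarecxi$ according to Lemmas~\ref{thm:P2csI} and \ref{thm:P2csE} on the edge $\edgestt^l$. These give $\polyPdcs\in\ensinegalite$ exactly when $(c,\xi)\in\squarecxiinterior\cup\edgescxi^l\cup\edgescxi^t$, and $\polyPdcs\in\ensegalite$ exactly when $(c,\xi)\in\edgescxi^b\cup\edgescxi^r\cup\verticescxi$.

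\emph{First region.} Here Theorem~\ref{thm:strikwerda432}(a) applies, so I must check $\polyPucscs\in\svNp$, which for degree one means $\vert\alphaucscsz\vert\le\vert\alphaucscsu\vert$. The proof of Lemma~\ref{thm:P2csE} already factors these coefficients at $\tun=-1$:
\[
\alphaucscsu=(1-\cos\xi)(1-c^2)(1-\tdeux^2)^2\bigl(c^2(1-\cos\xi)+(1+\cos\xi)\bigr),
\]
and $\alphaucscsz$ is the same product with the last factor replaced by $c^2(1-\cos\xi)-(1+\cos\xi)$. Both are real. Since $c^2(1-\cos\xi)\ge0$ and $1+\cos\xi\ge0$, the triangle inequality gives $\vert\alphaucscsz\vert\le\vert\alphaucscsu\vert$. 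Moreover $\alphaucscsu\neq0$ in this region: $\xi>0$, $c<1$, and the last factor vanishes only at $c=0,\xi=\pi$, where it equals $0$ only if $c^2\cdot2=0$, which indeed happens. At the vertex $\verticescxi^{lt}$ this would be degenerate, but that vertex belongs to $\verticescxi$, not to the present region. On $\edgescxi^t$ (where $c>0$) the factor equals $2c^2>0$, and on $\edgescxi^l$ (where $\xi<\pi$) it equals $1+\cos\xi>0$. So $\polyPucscs$ has exact degree one and its root lies in the closed unit disk.

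\emph{Second region.} Here Theorem~\ref{thm:strikwerda432}(b) applies, so I need the derivative $\polyPucsde=\mathrm{d}\polyPdcs/\mathrm{d}X$ to be a Schur polynomial of exact degree one. I expect it to factor, as on the bottom edge, as a nonzero multiple of $(1-\tdeux^2)$ times a linear polynomial $2X-a(\xi,c)$. The task is then to show $\vert a\vert<2$ on $\edgescxi^b\cup\edgescxi^r\cup\verticescxi$, with $\xi\in\{0,\pi\}$ or $c=1$. I would compute $\polyPucsde$ explicitly from \eqref{eq:ann_defP1cscs}/appendix formulas specialised to $\tun=-1$.

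\emph{Main obstacle.} The delicate points are the corners, where the root of $\polyPucsde$ might reach the unit circle, making the polynomial only von Neumann. The sharpest one is $\xi=0$, where $\polyPt=(X-1)(X-1)(X+\tdeux)$ has a double root at $1$. This root must be correctly excluded by the derivative test: $\polyPdcs\equiv0$ and $\polyPucsde$ Schur should confirm that the root at $1$ is simple for $\polyPdcs$'s companion structure. If at some corner the test fails, I would fall back on the minimal polynomial. At $\xi=0$ the matrix $\matG$ is in fact conjugate to a relaxation matrix whose eigenvalue $1$ corresponds to the conserved moment. Since $\tun=-1$ leaves $v$ unrelaxed under pure transport, which is diagonalisable at $\xi=0$, $\polymini$ is simple von Neumann. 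Stability then follows from uniform boundedness by a compactness/continuity argument on the finitely many exceptional $\xi$.
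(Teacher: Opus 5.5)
Your first region is correct and is essentially the paper's argument; the paper writes the same factorisation as $(1-\cos\xi)(1-c^4)(1-\tdeux^2)^2\bigl(\mu+\cos\xi-X(1+\mu\cos\xi)\bigr)$ with $\mu=(1-c^2)/(1+c^2)$.

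\textbf{Where the derivative test fails.} The gap is in the second region, and it is real. At $\tun=-1$ the appendix gives
\[
\polyPucsde(X;\xi,c,-1,\tdeux)=(1-\tdeux^2)\bigl(2X-1+c^2-(1+c^2)\cos\xi\bigr).
\]
Its root is $\tfrac12\bigl(1-c^2+(1+c^2)\cos\xi\bigr)$. This equals $1$ at $\xi=0$ for every $c$, and $-1$ at $(c,\xi)=(1,\pi)$; you missed this second corner. So Theorem~\ref{thm:strikwerda432}(b), which needs $\polyPucsde\in\Sp$, fails at both points. No derivative test can ``confirm simplicity'' there, because $\polyPt$ really equals $(X-1)^2(X+\tdeux)$ at $\xi=0$ and $(X+1)^2(X+\tdeux)$ at $(1,\pi)$. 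Prop.~\ref{thm:sufficientcond} is therefore unavailable. The paper's proof has the same hole: it only shows $\polyPucsde\in\svNp$, where (b) requires a Schur polynomial, and then asserts $\polyPt\in\svNp$ on all of $\squarecxi$.

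\textbf{Why your fallback does not close it.} Knowing that $\polymini$ is simple von Neumann at finitely many exceptional $\xi$ gives power-boundedness at each such $\xi$ separately. Definition~\ref{thm:defstab} needs a bound uniform in $\xi$, and compactness cannot supply it when two unimodular eigenvalues coalesce as $\xi\to0$. You must control the spectral projectors of the two eigenvalues near $1$.
\begin{itemize}
\item For $0<c<1$ this can be done. $\matG(0)$ is semisimple at $1$, and the first-order reduced operator on that eigenspace is similar to $-i\bigl(\begin{smallmatrix}0&1\\ c^2&0\end{smallmatrix}\bigr)$, the wave-system Jacobian, whose eigenvalues $\pm c$ are distinct. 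So the eigenvalues $1\mp ic\xi+O(\xi^2)$ carry analytic, bounded projectors. Combined with the Cohn--Schur result for $\xi\neq0$, this gives the uniform bound.
\item For $c=1$, $\matG$ is lower triangular with diagonal $(1-\sdeux,e^{-i\xi},e^{i\xi})$. Hence $\matG^n$ is bounded by a geometric series in $|1-\sdeux|<1$, which also covers the corner $(1,\pi)$.
\item For $c=0$ the reduced operator is nilpotent, and the statement itself is false.
\end{itemize}

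\textbf{The counterexample at $c=0$.} Take $s=\sdeux\in(0,2)$. Then
\[
\matG=\begin{pmatrix}1&r^T\\ 0&B\end{pmatrix},\qquad r=\begin{pmatrix}s\\ s\end{pmatrix},\qquad B=\frac12\begin{pmatrix}(2-s)e^{-i\xi}&-s\,e^{-i\xi}\\ -s\,e^{i\xi}&(2-s)e^{i\xi}\end{pmatrix}.
\]
For $0<\xi<\pi$, $B$ has spectral radius $<1$. So the upper-right block $r^T\sum_{k<n}B^k$ of $\matG^n$ tends to
\[
r^T(I-B)^{-1}=\frac{s}{(2-s)(1-\cos\xi)}\bigl(1-e^{i\xi},\,1-e^{-i\xi}\bigr).
\]
Each entry has modulus $\frac{s}{(2-s)\sin(\xi/2)}$, which tends to $\infty$ as $\xi\to0$. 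Physically, $v$ is collision-invariant and only diffuses, while feeding $u$ through $-\partial_x v$.

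So for $\tun=-1$, $|\tdeux|<1$, $c=0$, the scheme is not stable in the sense of Definition~\ref{thm:defstab}. Neither your argument nor the paper's can be completed at $c=0$. What can actually be proved is stability for $c\in(0,1]$.
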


\begin{proof}[Proof of Prop.~\ref{thm:stabedgel}]
    In this proof, we assume that $\tun=-1$ and $\vert\tdeux\vert<1$. Lemma~\ref{thm:P3I} then implies that
    $\polyPt(\cdot;\xi,c,-1,\tdeux)\in\ensinegalite$ for $\xi\in[0,\pi]$, $c\in[0,1]$.
    Then, applying Th.~\ref{thm:strikwerda432}, $\polyPt(\cdot;\xi,c,-1,\tdeux)\in\svNp$ if, and only if $\polyPdcs(\cdot;\xi,c,-1,\tdeux)\in\svNp$.
    Lemmas~\ref{thm:P2csI} and \ref{thm:P2csE} read then
    \begin{align*}
        \polyPdcs(\cdot;\xi,c,-1,\tdeux)\in\ensinegalite &\iff (c,\xi)\in\squarecxiinterior\cup\edgescxi^l\cup\edgescxi^t,\\
        \polyPdcs(\cdot;\xi,c,-1,\tdeux)\in\ensegalite &\iff (c,\xi) \in \edgescxi^b\cup\edgescxi^r\cup\verticescxi.
    \end{align*}
    First, we consider $(c,\xi)\in\squarecxiinterior\cup\edgescxi^l\cup\edgescxi^t$, we have
    \begin{equation*}
        \polyPucscs(X;c,\xi,-1,\tdeux) = 
        (1-\cos\xi)(1-c^4)(1-\tdeux^2)^2(
            \alpha_1^{\expcs\expcs} - X \beta_1^{\expcs\expcs}
        )
    \end{equation*}
    with $\mu=(1-c^2)/(1+c^2)\in(0,1]$,
    $\alpha_1^{\expcs\expcs} = \mu+\cos\xi$,
    and
    $\beta_1^{\expcs\expcs} = 1+\mu\cos\xi\geq0$.
    Then $\beta_1^{\expcs\expcs}\geq|\alpha_1^{\expcs\expcs}|$ as $(1-\mu)(1-\cos\xi)\geq0$ and $(1+\mu)(1+\cos\xi)\geq0$, which implies $\polyPucscs(X;c,\xi,-1,\tdeux)\in\svNp$.
    Second, we consider $(c,\xi) \in \edgescxi^b\cup\edgescxi^r\cup\verticescxi$, we have
    \begin{equation*}
        \polyPucsde(X;c,\xi,-1,\tdeux) = (1-\tdeux^2)(2X-1+c^2-(1+c^2)\cos\xi).
    \end{equation*}
    As $-2\leq -2c^2\leq 1-c^2+(1+c^2)\cos\xi\leq 2$, we obtain that $\polyPucsde(X;c,\xi,-1,\tdeux)\in\svNp$.
    We conclude that $\polyPt(\cdot;\xi,c,-1,\tdeux)\in\svNp$ for every $(c,\xi)\in\squarecxi$ and then that the scheme is stable for $c\in[0,1]$.
\end{proof}

\begin{proposition}[stability on $\edgestt^t$]
    \label{thm:stabedget}
    Considering $\tun\in(-1,1)$ and $\tdeux=1$, the lattice Boltzmann scheme is stable for $c=0$ and $c=1$ but unstable for every other values $c\in(0,1)$ (see Fig.~\ref{fig:edges_scxi_t}).
\end{proposition}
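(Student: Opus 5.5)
We follow the same route as for Props.~\ref{thm:stabedgeb} and~\ref{thm:stabedgel}, now with $\tdeux=1$ and $|\tun|<1$. By Lemma~\ref{thm:P3I}, $\polyPt(\cdot;\xi,c,\tun,1)\in\ensinegalite$ for all $(c,\xi)\in\squarecxi$. Hence, by Theorems~\ref{thm:strikwerda437} and~\ref{thm:strikwerda432}, $\polyPt\in\svNp$ (resp.\ $\vNp$) if, and only if, $\polyPdcs\in\svNp$ (resp.\ $\vNp$). Lemmas~\ref{thm:P2csI} and~\ref{thm:P2csE} then split $\squarecxi$ into two regions:
\begin{itemize}
    \item on $\squarecxiinterior\cup\edgescxi^l$ we have $\polyPdcs\in\ensinegalite$, and we must study $\polyPucscs$;
    \item on $\edgescxi^b\cup\edgescxi^r\cup\edgescxi^t\cup\verticescxi$ we have $\polyPdcs\in\ensegalite$, and we must study $\polyPucsde$, which has to be Schur.
\end{itemize}

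\emph{Case $c\in\{0,1\}$ (stability).} On the lines $c=0$ and $c=1$ we only meet $\edgescxi^l$, $\edgescxi^r$ and the vertices. On $\edgescxi^l$ ($c=0$, $0<\xi<\pi$) the factorization in the proof of Lemma~\ref{thm:P2csE} gives
\begin{equation*}
    \alphaucscsu=-\Re(\alphaucscsz)=(1-\cos^2\xi)(1-\tun^2)^2,
    \qquad
    \Im(\alphaucscsz)=0,
\end{equation*}
so $|\alphaucscsz|=|\alphaucscsu|$ and $\polyPucscs$ is a degree-one von Neumann polynomial. On $\edgescxi^r$ ($c=1$) and at the vertices, we compute $\polyPucsde$ explicitly from the expression of $\polyPdcs$ in the appendix. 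We expect a linear factor of the form $(1-\tun^2)(2X-1-\cos\xi\pm i c\sin\xi)$ or similar, as in the previous propositions. Its root is $(1+\cos\xi\mp ic\sin\xi)/2$, whose modulus is $<1$ except at $\xi=0$. The point $\xi=0$ needs separate treatment: we use $\polyPt(X;0,c,\tun,1)=(X-1)(X+\tun)(X+1)$, which is in $\svNp$ since $|\tun|<1$. Likewise, at $\xi=\pi$ the factorization from the proof of Prop.~\ref{thm:cond_sc} gives
\begin{equation*}
    \polyPt(X;\pi,c,\tun,1)=(X-\tun)\bigl(X^2+2(2c^2-1)X+1\bigr),
\end{equation*}
whose quadratic factor has roots $-(2c^2-1)\pm 2ic\sqrt{1-c^2}$ on the unit circle. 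These roots are simple except when $c^2\in\{0,1\}$; in that case we get a double root $\pm1$. Checking the characteristic polynomial directly in such degenerate spots is safer than the Cohn-Schur bookkeeping. If a double root on the circle does occur (e.g.\ $\xi=\pi$, $c\in\{0,1\}$), Prop.~\ref{thm:sufficientcond} no longer applies. We then fall back on the minimal polynomial: we show that $\matG(\pi)$ is diagonalizable, by computing $\operatorname{rank}(\matG-(\pm1)I)$, and we use the compactness/uniform-bound argument only at that isolated point. We expect this to be the delicate part of the stable direction, since uniformity in $\xi$ near the degenerate point must be guaranteed. The plan here is to invoke the cited result \cite{bellotti2025stability} rather than reprove it.

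\emph{Case $0<c<1$ (instability).} It suffices to exhibit one $\xi$ with $\polyPt\notin\vNp$, or with $\polyPt\in\vNp\setminus\svNp$ and a nontrivial Jordan block. In the interior $\squarecxiinterior$ we have $\polyPt\in\vNp$ if, and only if, $|\alphaucscsu|^2-|\alphaucscsz|^2\ge0$. Using the factorization above, this reads $(1-\cos^2\xi)^2(1-c^2)^2(1-\tun^2)^4\ge\ldots$ plus the extra imaginary part
\begin{equation*}
    \Im(\alphaucscsz)=2\sin\xi(1-\cos\xi)c(1-c^2)(1-\tun^2)^2 .
\end{equation*}
Hence
\begin{equation*}
    |\alphaucscsu|^2-|\alphaucscsz|^2=-\Im(\alphaucscsz)^2<0
\end{equation*}
whenever $0<c<1$ and $0<\xi<\pi$. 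Therefore $\polyPucscs\notin\vNp$, so $\polyPt\notin\vNp$ and the minimal polynomial has a root outside the closed unit disk, which gives instability. This step is essentially immediate from the factorizations already recorded in Lemma~\ref{thm:P2csE}. The main obstacle of the whole proof is thus the stable case at the boundary points $\xi\in\{0,\pi\}$, $c\in\{0,1\}$.
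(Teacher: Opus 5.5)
\textbf{The stable half of the proposition fails, and it fails at exactly the two points you isolated, $(c,\xi)=(0,\pi)$ and $(1,\pi)$.} Your instability argument for $0<c<1$ is correct and is the paper's: $|\alphaucscsu|^2-|\alphaucscsz|^2=-\Im(\alphaucscsz)^2<0$ on $\squarecxiinterior$ says the same thing as the paper's observation that the root $1-2ic\sin\xi/(1+\cos\xi)$ of $\polyPucscs$ lies outside the closed unit disk.

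\emph{Your diagonalizability step is false.} You plan to show that $\matG(\pi)$ is diagonalizable there, but it is not. For $c=0$,
\[
\matG(\pi)=\begin{pmatrix}1&2&2\\0&\tfrac{1+\tun}{2}&\tfrac{1-\tun}{2}\\0&\tfrac{1-\tun}{2}&\tfrac{1+\tun}{2}\end{pmatrix},
\qquad
\matG(\pi)^n\begin{pmatrix}0\\1\\1\end{pmatrix}=\begin{pmatrix}4n\\1\\1\end{pmatrix}.
\]
For $c=1$,
\[
\matG(\pi)=\begin{pmatrix}-1&0&0\\-\tfrac{3+\tun}{2}&-1&-(1+\tun)\\-\tfrac{1-\tun}{2}&0&\tun\end{pmatrix},
\qquad
\matG(\pi)^n\begin{pmatrix}2(1+\tun)\\0\\1-\tun\end{pmatrix}=(-1)^n\begin{pmatrix}2(1+\tun)\\4n(1+\tun)\\1-\tun\end{pmatrix}.
\]
In each case $\ker(\matG(\pi)\mp I)$ is one-dimensional while $\pm1$ is a double root. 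So the minimal polynomial is $(X\mp1)^2(X-\tun)\notin\svNp$, and $\matG(\pi)^n$ grows linearly, which contradicts Definition~\ref{thm:defstab}.

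\emph{The statement itself is wrong here.} The scheme is unstable for $c=0$ and $c=1$ as well, so no argument can prove that half. Appealing to \cite{bellotti2025stability} is not an option either: Prop.~\ref{thm:sufficientcond} needs $\polyPt\in\svNp$ at every $\xi$, and that fails at $\xi=\pi$. In moment variables the mechanism is transparent. At $\xi=\pi$ with $c=0$ and $\sdeux=2$, one gets $w^{n+1}=w^n$ and $u^{n+1}=u^n+4w^n/\lambda^2$, whatever $\sun$ is. This is the same Jordan block the paper itself detects at the vertices in Props.~\ref{thm:stabvertexlt} and~\ref{thm:stabvertexrt}.

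\emph{Why the paper's proof misses this.} In the $\ensegalite$ branch it checks $\polyPucsde\in\svNp$, but Theorem~\ref{thm:strikwerda432}(b) requires $\polyPucsde\in\Sp$. Two of its polynomials hit the unit circle:
\begin{itemize}
\item $\polyPucsde(X;\pi,c,\tun,1)=2(1-\tun^2)(X+2c^2-1)$ has root $1-2c^2$, which has modulus one at $c=0$ and $c=1$;
\item $\polyPucsde(X;\xi,1,\tun,1)=(1-\tun^2)(2X+1-\cos\xi+i\sin\xi)$ has a root of modulus $\sin(\xi/2)$, which equals one at $\xi=\pi$.
\end{itemize}
These reach the unit circle precisely at $(0,\pi)$ and $(1,\pi)$. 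Your guessed form of $\polyPucsde$ on $\edgescxi^r$ was copied from the $\tdeux=-1$ case and is off: the degeneracy there sits at $\xi=\pi$, not at $\xi=0$.

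Your instinct to inspect the characteristic polynomial directly at $\xi\in\{0,\pi\}$ was the right one. Carried through with the rank computation you proposed, it shows that on $\edgestt^t$ the correct conclusion is instability for every $c\in[0,1]$.
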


\begin{figure}[htb]
    \includegraphics[width=.5\textwidth]{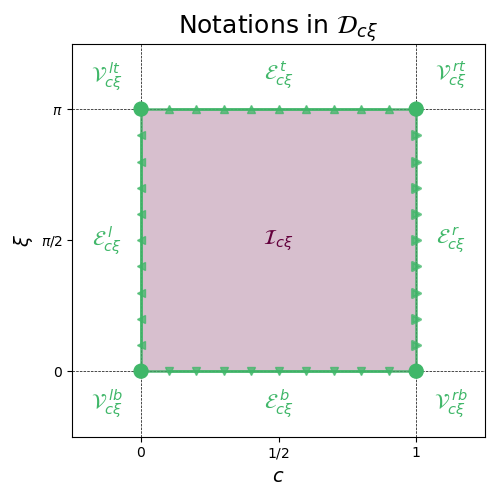}
    \caption{Stability for $(\tun,\tdeux)\in\edgestt^{t}$.}
    \label{fig:edges_scxi_t}
\end{figure}

\begin{proof}[Proof of Prop.~\ref{thm:stabedget}]
    In this proof, we assume that $\vert\tun\vert<1$ and $\tdeux=1$. Lemma~\ref{thm:P3I} then implies that
    $\polyPt(\cdot;\xi,c,\tun,1)\in\ensinegalite$ for $\xi\in[0,\pi]$, $c\in[0,1]$.
    Then, applying Th.~\ref{thm:strikwerda437}, $\polyPt(\cdot;\xi,c,\tun,1)\in\vNp$ if, and only if $\polyPdcs(\cdot;\xi,c,\tun,1)\in\vNp$.
    Lemmas~\ref{thm:P2csI} and \ref{thm:P2csE} read then
    \begin{align*}
        \polyPdcs(\cdot;\xi,c,\tun,1)\in\ensinegalite &\iff (c,\xi)\in\squarecxiinterior\cup\edgescxi^l,\\
        \polyPdcs(\cdot;\xi,c,\tun,1)\in\ensegalite &\iff (c,\xi) \in \edgescxi^b\cup\edgescxi^r\cup\edgescxi^t\cup\verticescxi.
    \end{align*}

    First, we consider $(c,\xi)\in\squarecxiinterior\cup\edgescxi^l$, we have
    \begin{equation*}
        \polyPucscs(X;\xi,c,\tun,1) = 
        (1-\tun^2)^2(1-c^2)(1-\cos\xi)\bigl(
            X(1{+}\cos\xi) - (1{+}\cos\xi-2\i c\sin\xi)
        \bigr)
    \end{equation*}
    As $\sin\xi\neq0$ and $\cos\xi\neq-1$ for $(c,\xi)\in\squarecxiinterior$, the root $1-2\i c\sin\xi/(1+\cos\xi)$ is located outside the closed unit circle and $\polyPucscs(\cdot;\xi,c,\tun,1)\notin\vNp$. 
    However, $\polyPucscs(\cdot;\xi,0,\tun,1)\in\svNp$.
    Second, we consider $(c,\xi)\in\edgescxi^{b}\cup\edgescxi^{r}\cup\edgescxi^{t}\cup\verticescxi$, we have
    \begin{align*}
        \polyPucsde(X;0,c,\tun,1) 
        &= 2X(1-\tun^2) \in\svNp,&c\in[0,1],\\
        \polyPucsde(X;\pi,c,\tun,1) 
        &= 2(1-\tun^2)\bigl(X+2c^2-1\bigr) \in\svNp,&c\in[0,1],\\
        \polyPucsde(X;\xi,1,\tun,1) 
        &= (1-\tun^2)(2X+1-\cos\xi+\i\sin\xi) \in\svNp,
        &\xi\in[0,\pi].
    \end{align*}
    We conclude that $\polyPt(\cdot;\xi,c,\tun,1)\in\svNp$ for every $(c,\xi)\in\edgescxi\cup\verticescxi$, and that  $\polyPt(\cdot;\xi,c,\tun,1)\notin\vNp$ for $(c,\xi)\in\squarecxiinterior$. Finally, the scheme is stable for $c=0$ and $c=1$ and is unstable for $c\in(0,1)$.
\end{proof}

\begin{proposition}[stability on $\edgestt^r$]
    \label{thm:stabedger}
    Considering $\tun=1$ and $\tdeux\in(-1,1)$, the lattice Boltzmann scheme is stable for $c=0$ and $c=1$ but unstable for every other values $c\in(0,1)$ (see Fig.~\ref{fig:edges_scxi_r}).
\end{proposition}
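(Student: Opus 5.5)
We mirror the proof of Prop.~\ref{thm:stabedget}, now with $\tun=1$ and $\vert\tdeux\vert<1$.

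\textbf{Reduction to $\polyPdcs$.} Since $\vert\tun\tdeux\vert=\vert\tdeux\vert<1$, Lemma~\ref{thm:P3I} gives $\polyPt(\cdot;\xi,c,1,\tdeux)\in\ensinegalite$ for all $(c,\xi)\in\squarecxi$. Theorems~\ref{thm:strikwerda437} and \ref{thm:strikwerda432} then reduce membership of $\polyPt$ in $\vNp$ (resp.\ $\svNp$) to the same membership for $\polyPdcs=\cohnschur(\polyPt)$. Lemmas~\ref{thm:P2csI} and \ref{thm:P2csE}, read on $\edgestt^r$, split the $(c,\xi)$-rectangle into two regions:
\begin{align*}
\polyPdcs\in\ensinegalite &\iff (c,\xi)\in\squarecxiinterior\cup\edgescxi^l\cup\edgescxi^t,\\
\polyPdcs\in\ensegalite &\iff (c,\xi)\in\edgescxi^b\cup\edgescxi^r\cup\verticescxi.
\end{align*}

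\textbf{First region, via $\polyPucscs$.} Here we study $\polyPucscs$. From the factorisations in the proof of Lemma~\ref{thm:P2csE} at $\tun=1$, write $K=(1-\cos\xi)(1-c^2)(1-\tdeux^2)^2$. Then
\begin{align*}
\alphaucscsu &= K\,(1+\cos\xi+c^2(1-\cos\xi)),\\
\alphaucscsz &= -K\,(1+\cos\xi+c^2(1-\cos\xi)) + 2\i\sin\xi\,(1-\cos\xi)\,c(1-c^2)(1-\tdeux^2)^2.
\end{align*}
The root is therefore
\begin{equation*}
X=-\alphaucscsz/\alphaucscsu = 1-\frac{2\i c\sin\xi}{1+\cos\xi+c^2(1-\cos\xi)},
\end{equation*}
whose modulus exceeds $1$ exactly when $c\sin\xi\neq0$.
\begin{itemize}
\item On $\squarecxiinterior$, i.e.\ $0<c<1$ and $0<\xi<\pi$, we get $\polyPucscs\notin\vNp$. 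Hence $\polyPt\notin\vNp$, the minimal polynomial is not von Neumann either, and the scheme is unstable for every $c\in(0,1)$.
\item On $\edgescxi^l$ ($c=0$), the root is $1$ and the degree-one polynomial is in $\svNp$.
\item On $\edgescxi^t$ ($\xi=\pi$, $0<c<1$), the denominator is $2c^2\neq0$ and $\sin\xi=0$, so again the root is $1$ and $\polyPucscs\in\svNp$.
\end{itemize}

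\textbf{Second region, via $\polyPucsde$.} On $\edgescxi^b\cup\edgescxi^r\cup\verticescxi$, Theorem~\ref{thm:strikwerda432}(b) requires $\polyPdde'$ of $\polyPdcs$, i.e.\ $\polyPucsde$, to be Schur. We compute $\polyPucsde$ from \eqref{eq:ann_defP2cs} at $\tun=1$ in each sub-case.
\begin{itemize}
\item On $\edgescxi^b$ ($\xi=0$), I expect something like $2X(1-\tdeux^2)$, with root $0$.
\item On $\edgescxi^r$ ($c=1$), I expect $(1-\tdeux^2)(2X-1+\cos\xi\pm\i\sin\xi)$ or similar. Its root has modulus $\tfrac12\vert 1-e^{\pm\i\xi}\vert=\sin(\xi/2)$, which is $<1$ except at $\xi=\pi$.
\end{itemize}
The vertex $(1,\pi)$ is the delicate point. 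At $(c,\xi)=(1,\pi)$ we should check directly that the root is strictly inside the disk. If it is not, we argue with the minimal polynomial, as in the vertex cases, using the explicit matrix $\matG(\pi,1,1,\tdeux)$ and exhibiting a diagonalisable structure.

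\textbf{Conclusion and main obstacle.} We conclude stability for $c\in\{0,1\}$ by Proposition~\ref{thm:sufficientcond}, and instability on $(0,1)$. The main obstacle is obtaining the explicit forms of $\polyPucsde$ on $\edgescxi^r$, especially near $\xi=\pi$ with $c=1$, where the Schur property degenerates. I would first try the direct symbolic evaluation. If that lands on the unit circle, the fallback is a Jordan-form check of $\matG$ at that single point.
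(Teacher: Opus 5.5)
Your reduction to $\polyPdcs$ matches the paper's, and so does your treatment of the region where $\polyPdcs\in\ensinegalite$; together they correctly give instability for $0<c<1$. Your guess on the bottom edge $\xi=0$ is also right. The gap is on $\edgescxi^r\cup\verticescxi$. You correctly note that Theorem~\ref{thm:strikwerda432}(b) needs $\polyPucsde\in\Sp$, but you guessed $\polyPucsde$ instead of computing it. From \eqref{eq:ann_defP1csde} with $\tun=1$,
\[
\polyPucsde(X;\xi,c,1,\tdeux)=(1-\tdeux^2)\bigl(2X-(1-c^2)(1-\cos\xi)+2ic\sin\xi\bigr).
\]
For $c=1$ the root is $-i\sin\xi$, which reaches the unit circle at $\xi=\pi/2$. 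At the point $(1,\pi)$ you single out, the root is $0$, so nothing degenerates there. You also skip the vertex $(c,\xi)=(0,\pi)$, where the root is $1$. At these two points $\polyPt$ equals $(X+\tdeux)(X+i)^2$ and $(X-1)^2(X-\tdeux)$ respectively. So Proposition~\ref{thm:sufficientcond} is unavailable. Your fallback, a pointwise minimal-polynomial check, is only a necessary condition and cannot give the uniform-in-$\xi$ bound of Definition~\ref{thm:defstab}.

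In fact no repair exists, because the stability half of the statement is false.

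\textbf{Case $c=1$.} Here $d=1$, so the first row of $\matG$ is $(-\tdeux,0,0)$. Hence $\mathrm{span}(e_2,e_3)$ is invariant, and $\matG$ acts on it by
\[
\begin{pmatrix} e^{-i\xi} & 2e^{-i\xi}\\ 0 & -e^{i\xi} \end{pmatrix}.
\]
At $\xi=\pi/2$ this is a nontrivial Jordan block for the eigenvalue $-i$, independent of $\tdeux$; it is the same block found in Proposition~\ref{thm:stabvertexrb}. The $(2,3)$ entry of $\matG(\pi/2)^n$ has modulus $2n$.

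\textbf{Case $c=0$.} The first column of $\matG$ is $e_1$ and its first row is $(1,w^T)$ with $w=(\sdeux,\sdeux)^T$. Its lower-right block $B$ has characteristic polynomial $X^2+\sdeux\cos\xi\,X+\tdeux$, so $B$ is Schur for $0<\xi<\pi$. The first row of $\matG^n$ is $\bigl(1,w^T\sum_{k<n}B^k\bigr)$, which tends to
\[
\bigl(1,\,w^T(I-B)^{-1}\bigr)=\Bigl(1,\,\frac{e^{i\xi/2}}{\cos(\xi/2)},\,\frac{e^{-i\xi/2}}{\cos(\xi/2)}\Bigr).
\]
This is unbounded as $\xi\to\pi$, even though $\matG(\pi)$ itself is diagonalisable.

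Hence for every $\tdeux\in(-1,1)$ the scheme is unstable at $c=0$ and at $c=1$, and only the instability for $0<c<1$ can be proved. The paper's own proof follows your route and fails at the same two points. It lists $2(1-\tdeux^2)(X+i\sin\xi)$ and $2(1-\tdeux^2)(X-1)$ as elements of $\svNp$, whereas Theorem~\ref{thm:strikwerda432}(b) requires them to be in $\Sp$.
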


\begin{figure}[htb]
    \includegraphics[width=.5\textwidth]{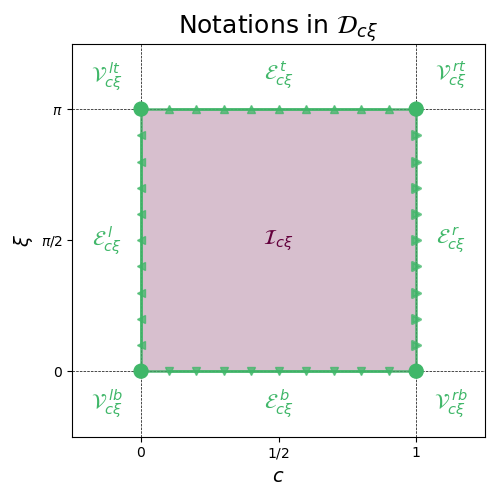}
    \caption{Stability for $(\tun,\tdeux)\in\edgestt^{r}$.}
    \label{fig:edges_scxi_r}
\end{figure}

\begin{proof}[Proof of Prop.~\ref{thm:stabedger}]
    We assume in this proof that $\tun=1$ and $\vert\tdeux\vert<1$. Lemma~\ref{thm:P3I} then implies that
    $\polyPt(\cdot;\xi,c,1,\tdeux)\in\ensinegalite$ for $\xi\in[0,\pi]$, $c\in[0,1]$.
    Then, applying Th.~\ref{thm:strikwerda432}, $\polyPt(\cdot;\xi,c,1,\tdeux)\in\svNp$ if, and only if $\polyPdcs(\cdot;\xi,c,1,\tdeux)\in\svNp$.
    Lemmas~\ref{thm:P2csI} and \ref{thm:P2csE} read then
    \begin{align*}
        \polyPdcs(\cdot;\xi,c,1,\tdeux)\in\ensinegalite &\iff (c,\xi)\in\squarecxiinterior\cup\edgescxi^l\cup\edgescxi^t,\\
        \polyPdcs(\cdot;\xi,c,1,\tdeux)\in\ensegalite &\iff (c,\xi) \in \edgescxi^b\cup\edgescxi^r\cup\verticescxi.
    \end{align*}
    First, we consider $(c,\xi)\in\squarecxiinterior\cup\edgescxi^l\cup\edgescxi^t$, we have
    {\small\begin{align*}
        \polyPucscs(X;c,\xi,1,\tdeux) &= 
        (1-\cos\xi)(1-c^2)(1-\tdeux^2)^2 \\
        &\quad \bigl(
            (1+\cos\xi+c^2(1-\cos\xi))(X-1)+2\i c\sin\xi
        \bigr) \notin\vNp,
        &&(c,\xi)\in\squarecxiinterior,\\
        \polyPucscs(X;0,\xi,1,\tdeux) &= 
        (1-\cos\xi)(1-\tdeux^2)^2
        (1+\cos\xi)(X-1) \in\svNp, &&\xi\in(0,\pi),\\
        \polyPucscs(X;c,\pi,1,\tdeux) &= 
        4c^2(1-c^2)(1-\tdeux^2)^2(X-1) \in\svNp, &&c\in(0,1).
    \end{align*}}
    Second, we consider $(c,\xi) \in \edgescxi^b\cup\edgescxi^r\cup\verticescxi$, we have
    \begin{equation*}
        \polyPucsde(X;c,\xi,1,\tdeux) = (1-\tdeux^2)(2X-(1-c^2)(1-\cos\xi)+2\i c\sin\xi).
    \end{equation*}
    Then
    \begin{align*}
        \polyPucsde(X;c,0,1,\tdeux) 
        &= 2(1-\tdeux^2)X \in\svNp,
        & c\in[0, 1],\\
        \polyPucsde(X;1,\xi,1,\tdeux) 
        &= 2(1-\tdeux^2)(X+\i\sin\xi) \in\svNp,
        &\xi\in[0,\pi],\\
        \polyPucsde(X;0,\pi,1,\tdeux) 
        &= 2(1-\tdeux^2)(X-1) \in\svNp.
    \end{align*}
    We conclude that $\polyPt(\cdot;\xi,c,1,\tdeux)\in\svNp$ for every $(c,\xi)\in\edgescxi\cup\verticescxi$ and that $\polyPt(\cdot;\xi,c,1,\tdeux)\notin\vNp$ for $(c,\xi)\in\squarecxiinterior$. Finally, the scheme is stable for $c=0$ and $c=1$ and is unstable for $c\in(0,1)$.
\end{proof}

%%%%%%%%%%%%%%%%%%%%%%%%%%%%%%%%%%%%
% stabilité aux sommets
%%%%%%%%%%%%%%%%%%%%%%%%%%%%%%%%%%%%
\subsection{Stability in \texorpdfstring{$\verticestt$}{the vertices}}
%%%%%%%%%%%%%%%%%%%%%%%%%%%%%%%%%%%%

In this section, we consider the four vertices 
$\verticestt^{lb}$, $\verticestt^{rb}$, $\verticestt^{lt}$, and $\verticestt^{rt}$.
We prove that (see Fig.~\ref{fig:stability_vertices}):
\begin{itemize}
    \item for $\tun=-1$ and $\tdeux=-1$, the scheme is stable for $c\in[0,1]$ (see Prop.~\ref{thm:stabvertexlb});
    \item for $\tun=1$ and $\tdeux=-1$, the scheme is stable for $c=0$ and unstable for $c\in(0,1]$ (see Prop.~\ref{thm:stabvertexrb});
    \item for $\tun=-1$ and $\tdeux=1$, the scheme is stable for $c\in(0,1)$ and unstable for $c\in\lbrace0,1\rbrace$ (see Prop.~\ref{thm:stabvertexlt});
    \item for $\tun=1$ and $\tdeux=1$, the scheme is unstable for $c\in[0,1]$ (see Prop.~\ref{thm:stabvertexrt}).
\end{itemize}

\begin{figure}[htb]
    \includegraphics[width=.33\textwidth]{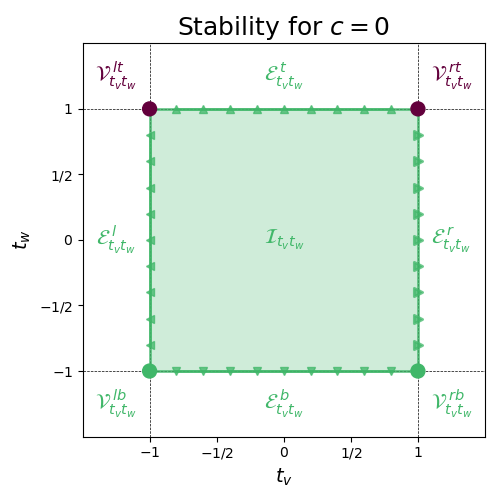}%
    \hfill%
    \includegraphics[width=.33\textwidth]{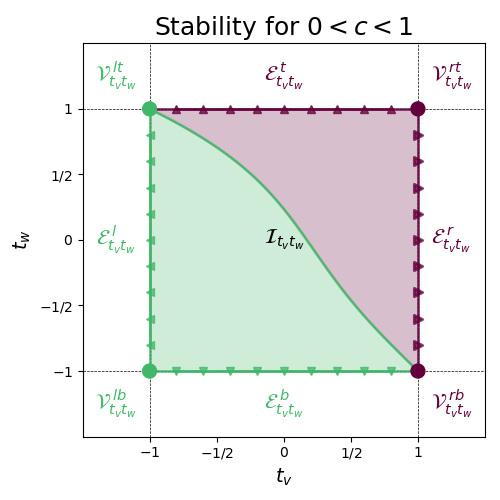}%
    \hfill%
    \includegraphics[width=.33\textwidth]{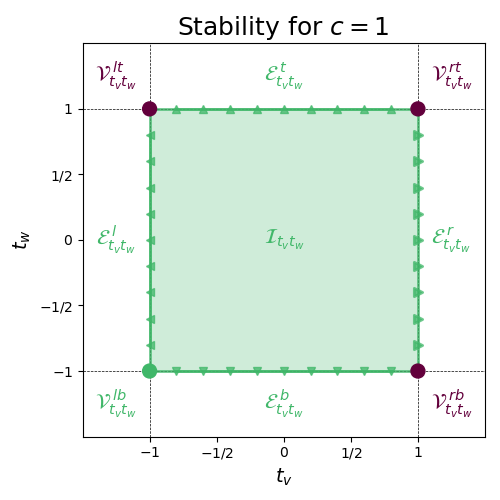}%
    \caption{Stability for $(\tun,\tdeux)\in\verticestt$. Green means stable  while purple means unstable.}
    \label{fig:stability_vertices}
\end{figure}

\begin{proposition}[stability on $\verticestt^{lb}$]
    \label{thm:stabvertexlb}
Considering $\tun=-1$ and $\tdeux=-1$,
    the lattice Boltzmann scheme is stable for every value $c\in[0,1]$.
\end{proposition}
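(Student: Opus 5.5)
The plan is to bypass the Cohn--Schur machinery and work directly on the amplification matrix, as in the last part of the proof of Prop.~\ref{thm:cond_sc}. For $\tun=\tdeux=-1$, that is $\sun=\sdeux=0$, the relaxation matrix $\matR$ reduces to the identity. Indeed, its off-diagonal entries $\sun c\lambda$ and $\tfrac12\sdeux d\lambda^2$ vanish, and its diagonal entries $1-\sun$ and $1-\sdeux$ equal one. Consequently $\matG(\xi)=\matT(\xi)\matM^{-1}\matM=\matT(\xi)$. I would check this against \eqref{eq:matG} by substituting $\tun=\tdeux=-1$ and $d=c^2$:
\begin{itemize}
\item the first row becomes $(1-c^2+c^2,\,0,\,0)$;
\item the second row becomes $\tfrac{e^{-\i\xi}}{2}(c^2+c-c^2-c,\;c^2+c+(1-c^2)+(1-c),\;c^2+c+(1-c^2)-(1+c))=(0,e^{-\i\xi},0)$;
\item the third row is obtained in the same way and gives $(0,0,e^{\i\xi})$.
\end{itemize}
Hence $\matG(\xi)=\mathbf{diag}(1,e^{-\i\xi},e^{\i\xi})$ for every $\xi$ and every $c$.

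Stability then follows at once from Definition~\ref{thm:defstab}. The matrix $\matG(\xi)^n=\mathbf{diag}(1,e^{-\i n\xi},e^{\i n\xi})$ is unitary, so $|\matG(\xi)^n|=1$ in the spectral norm, uniformly in $n\in\mathbb{N}$ and $\xi\in[-\pi,\pi]$. One may take $C=1$, with a norm-equivalence constant if another matrix norm is used. This bound is uniform in $\xi$, which the minimal-polynomial criterion alone would not provide, so no appeal to Prop.~\ref{thm:sufficientcond} is needed.

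It is also worth recording why the characteristic-polynomial route would be the wrong one here. One has $\polyPt=(X-1)(X^2-2\cos\xi\,X+1)$. At $\xi=0$ this has the triple root $1$, so $\polyPt\notin\svNp$ and Prop.~\ref{thm:sufficientcond} does not apply. This is precisely the non-generic case $\polyPt\in\vNp\backslash\svNp$, and the way around it is that $\matG$ is diagonal, hence diagonalizable with all eigenvalues of modulus one. The direct norm bound above settles the case uniformly. I expect no real obstacle; the only step that needs care is the algebraic verification that $\matG$ is exactly diagonal for all $c$, not merely for $c\in[0,1]$.
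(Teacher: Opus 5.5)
Your proof is correct and follows the paper's own route. With $\sun=\sdeux=0$ the relaxation is the identity, so $\matG(\xi)=\mathbf{diag}(1,e^{-i\xi},e^{i\xi})$ is diagonal with unimodular entries and its powers are uniformly bounded; your explicit unitary-norm bound, and your observation that $\polyPt$ has a triple root at $\xi=0$ so that Prop.~\ref{thm:sufficientcond} cannot be invoked, only spell out what the paper summarizes as ``clearly stable''.
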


\begin{proof}[Proof of Prop.~\ref{thm:stabvertexlb}]
    Assuming $\tun=-1$ and $\tdeux=-1$, the amplification matrix $\matG$ reads 
    \begin{equation*}
        \matG = \begin{pNiceArray}{ccc}
        1 & 0 & 0 \\
        0 & e^{{-}\i\xi} & 0 \\
        0 & 0 & e^{\i\xi}
        \end{pNiceArray}.
    \end{equation*}
    The scheme is then clearly stable
    % The minimal polynomial $\polymini$ of $\matG$ is then clearly a simple von Neumann polynomial
    as the matrix $\matG$ is diagonal for every $\xi\in[0,\pi]$ with eigenvalues on the unit circle.
    % \begin{equation*}
    %     \polymini(X;\xi,c,-1,-1)= \left\lbrace
    %     \begin{aligned}
    %         &(1-X)(e^{{-}\i\xi}-X)(e^{\i\xi}-X)
    %         &&\text{if } \xi\notin\lbrace 0,\pi\rbrace,\\
    %         &1-X&&\text{if } \xi=0,\\
    %         &(1-X)(1+X)&&\text{if }\xi=\pi.
    %     \end{aligned}\right.
    % \end{equation*}
\end{proof}

\begin{proposition}[stability on $\verticestt^{rb}$]
    \label{thm:stabvertexrb}
    Considering $\tun=1$ and $\tdeux=-1$, 
    the lattice Boltzmann scheme is stable for $c=0$ and unstable for $c\in(0,1]$ (see Fig.~\ref{fig:vertices_scxi_rb}).
\end{proposition}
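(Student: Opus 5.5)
The plan is to work directly with the amplification matrix rather than running the Cohn--Schur recursion. At the vertex $\tun=1$, $\tdeux=-1$ we have $\sun=2$, $\sdeux=0$ and $d=c^2$, and the matrix has a very simple structure. By Lemma~\ref{thm:P3E} the characteristic polynomial $\polyPt$ lies in $\ensegalite$, so the generic machinery only leads to derivative tests. What decides stability is the minimal polynomial, i.e. the Jordan structure, which is better read off the matrix itself.

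First I substitute into \eqref{eq:matGs}. Since $1-\sdeux d=1$ and $\sdeux(1-d)=0$, the first row of $\matG$ is $(1,0,0)$. The matrix therefore has the block lower-triangular form
\begin{equation*}
\matG(\xi)=\begin{pmatrix}1 & 0\\ a(\xi) & B(\xi)\end{pmatrix},
\qquad
a(\xi)=\begin{pmatrix} c\,e^{-\i\xi}\\ -c\,e^{\i\xi}\end{pmatrix},
\qquad
B(\xi)=\begin{pmatrix} c\,e^{-\i\xi} & (1+c)e^{-\i\xi}\\ (1-c)e^{\i\xi} & -c\,e^{\i\xi}\end{pmatrix}.
\end{equation*}
Here $\det B=-1$ and $\operatorname{tr}B=-2\i c\sin\xi$. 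Hence $\polyPt=(X-1)(X^2+2\i c\sin\xi\,X-1)$, which is consistent with the coefficients $\alphatz=1$, $\alphatu=-1-2\i c\sin\xi$, $\alphatd=-1+2\i c\sin\xi$. The roots of the quadratic factor are $-\i c\sin\xi\pm\sqrt{1-c^2\sin^2\xi}$. For $c\le1$ these all lie on the unit circle, so $\polyPt\in\vNp$ for every $\xi$. The question is thus exactly whether some $\xi$ produces a nontrivial Jordan block.

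For $c=0$ we get $a\equiv0$ and $B=\begin{pmatrix}0&e^{-\i\xi}\\ e^{\i\xi}&0\end{pmatrix}$, so $B^2=I$. Then $\matG^{2}=I$ for every $\xi$ and $|\matG^n|$ is uniformly bounded, which proves stability.

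For $c\in(0,1]$ I evaluate at $\xi=\pi$. There the quadratic factor is $X^2-1$, so $1$ is an eigenvalue of algebraic multiplicity two. Then I compute the rank of
\begin{equation*}
\matG(\pi)-I=\begin{pmatrix}0&0&0\\ -c& -1-c & -1-c\\ c & -(1-c) & c-1\end{pmatrix}.
\end{equation*}
For $c\neq0$ the last two rows are independent, since the $2\times2$ minor on the first two columns equals $c(1-c)+c(1+c)=2c\neq0$. So the eigenspace of $1$ is one-dimensional, and $\matG(\pi)$ has a $2\times2$ Jordan block for the eigenvalue $1$. Equivalently, the minimal polynomial $(X-1)^2(X+1)$ is not simple von Neumann. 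By the necessary condition (the proposition based on the Dunford--Jordan decomposition), $|\matG(\pi)^n|$ grows linearly and the scheme is unstable. For $c=1$ one can additionally observe the double root $-\i$ at $\xi=\pi/2$, but the argument at $\xi=\pi$ already covers it.

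The only delicate point is choosing where to look. Near $\xi=0$ the eigenvalue $1$ is also double, but there $\matG(0)-I$ has rank one (its third row is the negative of its second). Moreover $a(0)$ is an eigenvector of $B(0)$ for $-1$, so no growth arises at $\xi=0$. The obstruction is located at $\xi=\pi$, and once identified it reduces to a rank computation. I would finish by invoking Property~\ref{thm:symprop} to transfer the conclusion to $c\in[-1,0)$ and $\xi\in[-\pi,0]$.
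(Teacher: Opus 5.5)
Your proof is correct and follows essentially the paper's route. The paper also factors the characteristic polynomial as $(X-1)(X^2+2ic\sin\xi\,X-1)$ and gets stability at $c=0$ from the minimal polynomial $X^2-1$, which is your observation that $\matG^2=I$ for every $\xi$. It gets instability for $c\in(0,1]$ from the minimal polynomial $(X-1)^2(X+1)$ at $\xi=\pi$, which your rank computation establishes directly. The paper additionally treats $\xi=0$ and $(c,\xi)=(1,\pi/2)$ to draw its figure, but as you note this is not needed for the statement.
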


\begin{figure}[htb]
    \includegraphics[width=.5\textwidth]{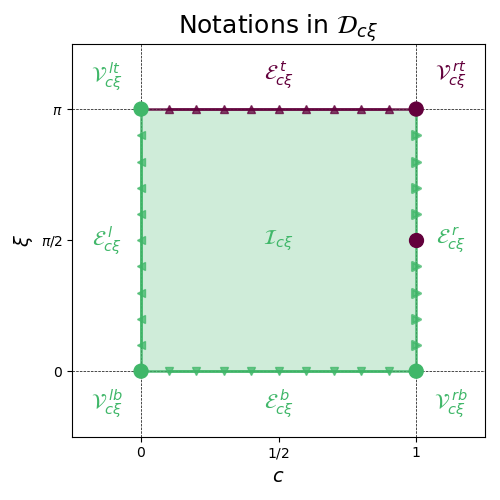}
    \caption{Stability for $(\tun,\tdeux)\in\verticestt^{rb}$.}
    \label{fig:vertices_scxi_rb}
\end{figure}

\begin{proof}[Proof of Prop.~\ref{thm:stabvertexrb}]
    Assuming $\tun=1$ and $\tdeux=-1$, 
    the characteristic polynomial given in \eqref{eq:defP3} yields to
    % \begin{equation*}
    %     \polyPt(X;\xi,c,1,-1) = X^3 
    %     - X^2(1 - 2\i c\sin\xi) - X(1+2\i c\sin\xi)
    %     + 1 
    % \end{equation*}
    \begin{align*}
        \polyPt(X;\xi,c,1,-1) &= X^3 
        - X^2(1 - 2\i c\sin\xi) - X(1+2\i c\sin\xi)
        + 1 \\
        &= (X-1)(X^2 + 2\i c\sin\xi X -1)
    \end{align*}
    for which the roots are $1$, $\sqrt{1-c^2\sin^2\xi} + \i c\sin\xi$, and $-\sqrt{1-c^2\sin^2\xi} + \i c\sin\xi$. Then $\polyPt\in\vNp$ for $(c,\xi)\in\squarecxi$ and
    \begin{equation*}
        \polyPt\in\svNp \iff
        (c,\xi)\in \squarecxiinterior \cup \edgescxi^{r} \backslash\lbrace (1,\pi/2)\rbrace.
    \end{equation*}
    % On doit traiter a part
    % c = 0 => 1 est racine double
    % xi = 0 => 1 est racine double
    % xi = pi => 1 est racine double
    % c = 1 et xi = pi/2 => i est racine double
    For $(c,\xi)\in\edgescxi^{l}\cup\verticescxi^{lt}\cup\verticescxi^{lb}$, that is $c=0$ and $0\leq\xi\leq\pi$, the minimal polynomial of the amplification matrix
    \begin{equation*}
        % sun = 2, sdeux = 0, c = 0
        \matG = \begin{pNiceArray}{ccc}
            1 & 0 & 0 \\
            0 & 0 & e^{{-}\i\xi} \\
            0 & e^{\i\xi} & 0
        \end{pNiceArray}
    \end{equation*}
    reads $\polymini = X^2-1\in\svNp$. The scheme is then stable for $c=0$.
    For $(c,\xi)\in\edgescxi^{b}\cup\verticescxi^{lb}\cup\verticescxi^{rb}$, that is $0\leq c\leq 1$ and $\xi=0$, the minimal polynomial of the amplification matrix
    \begin{equation*}
        % sun = 2, sdeux = 0, xi = 0
        \matG = \begin{pNiceArray}{ccc}
            1 & 0 & 0 \\
            c & c & 1+c \\
            -c & 1-c & -c
        \end{pNiceArray}
    \end{equation*}
    reads also $\polymini = X^2-1\in\svNp$.
    For $(c,\xi)\in\edgescxi^{t}\cup\verticescxi^{rt}$, that is $0<c\leq 1$ and $\xi=\pi$, the minimal polynomial of the amplification matrix
    \begin{equation*}
        % sun = 2, sdeux = 0, xi = pi
        \matG = \begin{pNiceArray}{ccc}
            1 & 0 & 0 \\
            -c & -c & -1-c \\
            c & -1+c & c
        \end{pNiceArray}
    \end{equation*}
    reads $\polymini = (X-1)(X^2-1)\notin\svNp$. The last case is then $c=1$ and $\xi=\pi/2$ for which the minimal polynomial of the matrix
    \begin{equation*}
        % sun = 2, sdeux = 0, c = 1, xi = pi/2
        \matG = \begin{pNiceArray}{ccc}
            1 & 0 & 0 \\
            -\i & -\i & -2\i \\
            -\i & 0 & -\i
        \end{pNiceArray}
        %%%        /  1    0    0  \
        %%% G^2 = | -i-3  -1   -4  |
        %%%        \-i-1   0   -1  /
        %%% G^2-(1+i)G+iI = -i+3 +i(1+i) = 
    \end{equation*}
    reads $\polymini = (X-1)(X+\i)^2 \notin \svNp$ as $\matG^2$ is not equal to $(1-\i)\matG+\i \matId$. The scheme is then unstable for $c>0$.
\end{proof}

\begin{proposition}[stability on $\verticestt^{lt}$]
    \label{thm:stabvertexlt}
    Considering $\tun=-1$ and $\tdeux=1$, 
    the lattice Boltzmann scheme is stable for every value $c\in(0,1)$ and unstable for $c\in\lbrace0,1\rbrace$
    (see Fig.~\ref{fig:vertices_scxi_lt}).
\end{proposition}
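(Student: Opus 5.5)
Throughout, take $\tun=-1$ and $\tdeux=1$, i.e. $\sun=0$ and $\sdeux=2$. The plan is to compute $\polyPt$ in closed form, locate its roots exactly, and then treat separately the parameter values where roots on the unit circle collide. From the coefficients listed after \eqref{eq:defP3} we get $\alphatz=1$, $\Im(\alphatu)=\Im(\alphatd)=0$ (both contain the factor $1+\tun$) and $\alphatu=\alphatd=a$ with $a:=-1+2c^2(1-\cos\xi)$. Hence
\begin{equation*}
\polyPt(X;\xi,c,-1,1)=X^3+aX^2+aX+1=(X+1)\bigl(X^2-2bX+1\bigr),\qquad b:=1-c^2(1-\cos\xi).
\end{equation*}
For $(c,\xi)\in\squarecxi$ we have $0\le c^2(1-\cos\xi)\le 2$, so $b\in[-1,1]$. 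The roots are therefore $-1$ and $e^{\pm\i\theta}$ with $\cos\theta=b$, all of modulus one, so $\polyPt\in\vNp$ everywhere. This is consistent with Lemma~\ref{thm:P3E}, and it is why the Cohn-Schur recursion is not needed here. The roots are simple unless $b=\pm1$: $b=1$ iff $c=0$ or $\xi=0$, giving the double root $1$, and $b=-1$ iff $c=1$ and $\xi=\pi$, giving the triple root $-1$.

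\emph{Instability for $c\in\lbrace0,1\rbrace$.} I would exhibit a nontrivial Jordan block at one frequency.
\begin{itemize}
\item For $c=1$ and $\xi=\pi$, I would write $\matG$ explicitly from \eqref{eq:matGs} with $\sun=0$, $\sdeux=2$, $d=c^2$, and check that $\matG+\matId\neq0$ (this is immediate from the entries). Then I would compute $(\matG+\matId)^2$; if it is nonzero, the minimal polynomial is $(X+1)^2$ or $(X+1)^3$, hence not in $\svNp$. Should $(\matG+\matId)^2$ happen to vanish, the minimal polynomial would be $(X+1)^2$, which still has a repeated root on the unit circle, so either way instability follows.
\item For $c=0$, $\matG$ has block structure: its first column is $(1,0,0)^T$ and its lower-right $2\times2$ block is $\tfrac12 e^{\mp\i\xi}$ times something of trace $0$. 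The eigenvalue $1$ is double for every $\xi$. I would check that $\matG-\matId$ has rank $2$ for some $\xi\in(0,\pi)$, for instance $\xi=\pi/2$, so that the minimal polynomial contains $(X-1)^2$ and the scheme is unstable.
\end{itemize}

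\emph{Stability for $0<c<1$.} Here the roots are simple for all $\xi\in(0,\pi]$. At $\xi=0$ the eigenvalue $1$ is double, and from \eqref{eq:matGs} we get
\begin{equation*}
\matG(0)=\begin{pmatrix}1-2c^2&2(1-c^2)&2(1-c^2)\\ c^2&c^2&-(1-c^2)\\ c^2&-(1-c^2)&c^2\end{pmatrix}.
\end{equation*}
I would verify $(\matG(0)-\matId)(\matG(0)+\matId)=0$, so that $\matG(0)$ is diagonalizable and its minimal polynomial lies in $\svNp$. This gives power-boundedness pointwise in $\xi$, but it does not by itself give a bound uniform in $\xi$. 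Proposition~\ref{thm:sufficientcond} does not apply directly, because $\polyPt\notin\svNp$ at $\xi=0$.

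\emph{Main obstacle: uniformity near $\xi=0$.} As $\xi\to0$ two unimodular eigenvalues $1$ and $e^{\i\theta}$ merge, so I must show the associated eigenvectors do not become parallel. My first attempt would be to write eigenvectors $v_1(\xi)$ for $1$, and $v_\pm(\xi)$ for $e^{\pm\i\theta}$ and for $-1$, in closed form. These should depend continuously (and analytically in $\sin(\xi/2)$) on $\xi$, and would give $\matG=\mathbf{V}\mathbf{D}\mathbf{V}^{-1}$. I would then show that $\det\mathbf{V}(\xi)/\sin(\xi/2)$ extends to a non-vanishing continuous function on $[0,\pi]$, after rescaling the eigenvector that merges. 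Since $\mathbf{D}$ is unitary, this gives $\sup_{\xi}|\mathbf{V}|\,|\mathbf{V}^{-1}|<\infty$ and hence $|\matG^n|\le C$.

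If this rescaling is messy, the fallback is to look for a Hermitian matrix $\mathbf{H}(\xi)$, uniformly positive definite on $[0,\pi]$, with $\matG^*\mathbf{H}\matG=\mathbf{H}$. This is plausible because the spectrum is exactly unitary, and it would yield stability directly. Alternatively one could invoke the uniform version in \cite[Propositions 3 and 4]{bellotti2025stability} if it covers isolated semisimple collisions. Property~\ref{thm:symprop} extends everything to $\xi\in[-\pi,0]$ and to negative $c$.
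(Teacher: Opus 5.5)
Your route is the paper's. You factor $\polyPt=(X+1)(X^2-2bX+1)$, exhibit a nontrivial Jordan block at a single frequency when $c\in\{0,1\}$, and, for $0<c<1$, diagonalize $\matG(\xi)$ by an eigenvector matrix that is uniformly invertible on $[0,\pi]$. Your two instability arguments are complete and agree with the paper. At $c=1$, $\xi=\pi$ one has $\matG+\matId\neq0$ and in fact $(\matG+\matId)^2=0$, so the minimal polynomial is $(X+1)^2$. At $c=0$, $\matG-\matId$ has rank $2$ for every $\xi\in(0,\pi]$, which is the paper's observation $\matG^2\neq\matId$. You are also right on two further points: $\matG(0)^2=\matId$ gives only pointwise boundedness, and Proposition~\ref{thm:sufficientcond} is unavailable because $\polyPt\notin\svNp$ at $\xi=0$.

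The stability half, however, misdescribes the degeneracy, and the step that carries the proof is only announced. For $c>0$ and $\xi\in(0,\pi]$, $1$ is not an eigenvalue. The spectrum is $\{-1,e^{i\theta},e^{-i\theta}\}$ with $\cos\theta=b<1$, and it is the pair $e^{\pm i\theta}$ that coalesces into the double eigenvalue $1$ as $\xi\to0$. So there is no field $v_1(\xi)$, and both $v_+$ and $v_-$ take part in the collision. Rescaling a column by $1/\sin(\xi/2)$ cannot cure columns that become parallel. What must be shown is that continuous, non-vanishing choices of $v_\pm(\xi)$ have limits at $\xi=0$ spanning the two-dimensional space $\ker(\matG(0)-\matId)$. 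As you note yourself, this does not follow from $\matG(0)^2=\matId$, so it has to be computed.

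It is true. Set $s=\sin(\xi/2)$ and $r=\sqrt{1-c^2s^2}$, so the two eigenvalues are $z_\pm=(r\pm ics)^2$. With $u=v_1+v_2+v_3$, the eigen-equations for $z\neq\pm1$ give $v_1=2(1-c^2)u/(1+z)$ and $v_{2,3}=c^2u(ze^{\mp i\xi}-1)/(z^2-1)$. The choice $u=2r(r\pm ics)$ removes the $0/0$ at $\xi=0$ and yields
\begin{equation*}
V_0=\bigl(-1-e^{-i\xi},\,e^{-i\xi},\,1\bigr)^T,\qquad V_\pm=\bigl(2(1-c^2),\,ce^{-i\xi/2}(c\cos\tfrac{\xi}{2}\mp r),\,ce^{i\xi/2}(c\cos\tfrac{\xi}{2}\pm r)\bigr)^T,
\end{equation*}
with $\det[V_0|V_-|V_+]=-8cr^3e^{-i\xi/2}$. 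Hence $|\det|\geq 8c(1-c^2)^{3/2}>0$ on $[0,\pi]$. This is exactly the determinant bound the paper uses. The paper's printed $V_\pm$ carry $\sin\tfrac{\xi}{2}$ instead of $\cos\tfrac{\xi}{2}$, and $e^{-i\xi/2}$ in the last entry. This is apparently a misprint, since those vectors are not eigenvectors at $\xi=0$; its determinant formula is the one above. Bounded entries and this lower bound give $|\matG^n|\leq|P|\,|P^{-1}|$ uniformly in $n$ and $\xi$.

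Your fallbacks would not save work. A uniformly positive $\mathbf H$ with $\matG^*\mathbf H\matG=\mathbf H$ is equivalent to the same bound (take $\mathbf H=P^{-*}P^{-1}$). A unimodular spectrum alone does not make it plausible, precisely because eigenvectors of colliding unimodular eigenvalues may become parallel.
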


\begin{figure}[htb]
    \includegraphics[width=.5\textwidth]{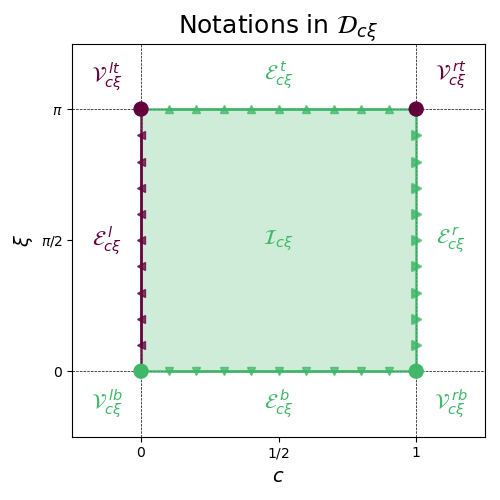}
    \caption{Stability for $(\tun,\tdeux)\in\verticestt^{lt}$.}
    \label{fig:vertices_scxi_lt}
\end{figure}

\begin{proof}[Proof of Prop.~\ref{thm:stabvertexlt}]
    Assuming $\tun=-1$ and $\tdeux=1$, the characteristic polynomial given in \eqref{eq:defP3} yields to
    \begin{align*}
        \polyPt(X;\xi,c,-1,1) &= X^3 
        + X^2(-1+2c^2(1-\cos\xi))
        + X(-1+2c^2(1-\cos\xi))
        + 1 \\
        &= (1+X)(1-2X(1-c^2(1-\cos\xi))+X^2)
    \end{align*}
    for which the roots are $-1$, $\alpha+\i\sqrt{1-\alpha^2}$ and $\alpha-\i\sqrt{1-\alpha^2}$ with $\alpha=1-c^2(1-\cos\xi)$. Then $\polyPt\in\vNp$ for $(c,\xi)\in\squarecxi$ and
    \begin{equation*}
        \polyPt\in\svNp \iff \alpha^2\neq1 \iff
        (c,\xi) \in \squarecxiinterior \cup \edgescxi^{t} \cup \edgescxi^r.
    \end{equation*}
    We first investigate the stability for $c=0$.
    For $(c,\xi)\in\edgescxi^{l}\cup\verticescxi^{lt}$, that is $c=0$ and $0<\xi\leq\pi$, the minimal polynomial of the amplification matrix
    \begin{equation*}
        \matG = \begin{pNiceArray}{ccc}
            1 & 2 & 2 \\
            0 & 0 & -e^{{-}\i\xi} \\
            0 & -e^{\i\xi} & 0
        \end{pNiceArray}
    \end{equation*}
    reads $\polymini=(X-1)^2(X+1)$ as $\matG^2$ is not equal to $\matId$:
    \begin{equation*}
        \matG^2 = \begin{pNiceArray}{ccc}
            1 & 2(1-e^{\i\xi}) & 2(1-e^{{-}\i\xi}) \\
            0 & 1 & 0 \\
            0 & 0 & 1
        \end{pNiceArray}.
    \end{equation*}
    This shows that the scheme is unstable for $c=0$. We remark that the matrix $\matG$ reduces to identity for $\xi=0$ but it is evidently not enough to impose the stability of the scheme.

    Second, we investigate the stability for $c=1$. The necessary condition involving the characteristic polynomial is fulfilled for $0<\xi<\pi$ ($(c,\xi)\in\edgescxi^{r}$). We remark first that for $(c,\xi)\in\verticescxi^{rb}$, that is $c=1$ and $\xi=0$, the minimal polynomial of the amplification matrix
    \begin{equation*}
        \matG = \begin{pNiceArray}{ccc}
            -1 & 0 & 0 \\
            1 & 1 & 0 \\
            1 & 0 & 1
        \end{pNiceArray}
    \end{equation*}
    reads $\polymini=X^2-1\in\svNp$ and second that for $(c,\xi)\in\verticescxi^{rt}$, that is $c=1$ and $\xi=\pi$, the minimal polynomial of the amplification matrix
    \begin{equation*}
        \matG = \begin{pNiceArray}{ccc}
            -1 & 0 & 0 \\
            -1 & -1 & 0 \\
            -1 & 0 & -1
        \end{pNiceArray}
    \end{equation*}
    reads $\polymini=(X+1)^2\notin\svNp$, hence the scheme is unstable for $c=1$.

    Finally, we investigate the stability for $0<c<1$. For $(c,\xi)\in\edgescxi^{b}$, that is $0<c<1$ and $\xi=0$, the minimal polynomial of the amplification matrix
    \begin{equation*}
        \matG = \begin{pNiceArray}{ccc}
            1-2c^2 & 2(1-c^2) & 2(1-c^2) \\
            c^2 & c^2 & c^2-1 \\
            c^2 & c^2-1 & c^2
        \end{pNiceArray}
    \end{equation*}
    reads $\polymini=X^2-1\in\svNp$ as $\matG^2=\matId$.
    This means that the necessary condition for stability is fulfilled. To show that the scheme is indeed stable, we argue by a direct computation of $\matG^n$ for $\xi\in[0,\pi]$.
    \begin{equation*}
        \matG = 
        \begin{pNiceArray}{ccc}
            c^2(\cos\xi-\i \sin\xi)& c^2(\cos\xi-\i \sin\xi)& -(1-c^2)(\cos\xi-\i \sin\xi)\\
            c^2(\cos\xi+\i \sin\xi)& -(1-c^2)(\cos\xi+\i \sin\xi)& c^2(\cos\xi+\i \sin\xi)
        \end{pNiceArray}.%,
    \end{equation*}
    This matrix is diagonalisable on $\mathbb{C}$ for every $\xi \in [0, \pi]$ with eigenvalues $\lambda_0=-1$, $\lambda_\pm=\alpha\pm\i\sqrt{1-\alpha^2}$, with $\alpha=1-c^2(1-\cos\xi)\in(-1,1]$.
    Using half and double angle formulas for sine and cosine, we have
    \begin{equation*}
        \alpha=1-2c^2\sin(\xi/2)^2, 
        \qquad
        \lambda_\pm = 1-2c^2\sin(\xi/2)^2 \pm2\i c\sin(\xi/2)\sqrt{1-c^2\sin(\xi/2)^2}.
    \end{equation*}
    Eigenvectors corresponding to these eigenvalues are given by
    \begin{equation*}
        V_0 = \begin{pmatrix}
        -1-e^{-\i\xi}\\%-(1+\cos\xi) + i\sin\xi\\
        e^{-\i\xi}\\%\cos\xi -i\sin\xi\\
        1
        \end{pmatrix}, 
        \quad
        V_\pm = \begin{pmatrix}
            % 2(1-c^2)(\lambda_\pm-e^{-\i\xi})\\
            % (1-c^2)(1-\lambda_\pm)e^{-\i\xi}\\
            % -(1-\lambda_\pm) - c^2e^{-\i\xi} + c^2\lambda_\pm e^{\i\xi}
            2(1-c^2)\\
            c e^{-\i\xi/2}(c\sin(\xi/2)\mp\sqrt{1-c^2\sin(\xi/2)^2})\\
            c e^{-\i\xi/2}(c\sin(\xi/2)\pm\sqrt{1-c^2\sin(\xi/2)^2})
        \end{pmatrix}.
    \end{equation*}
    Defining $\matP$ the change of basis and $\matL$ the diagonal matrix of the eigenvalues
    \begin{equation*}
        \matP = \begin{pNiceArray}{c|c|c}
            \phantom{V}&\phantom{V}&\phantom{V}\\
            V_0 & V_- & V_+\\
            \phantom{V}&\phantom{V}&\phantom{V}
        \end{pNiceArray}
        \qquad
        \matL = \begin{pNiceArray}{ccc}
            \lambda_0 & 0 & 0 \\
            0 & \lambda_- & 0 \\
            0 & 0 & \lambda_+
        \end{pNiceArray},
    \end{equation*}
    we have
    \begin{equation*}
        \matG^n = \frac{1}{\det\matP}\matP\matL^n \operatorname{Com}(\matP)^T.
    \end{equation*}
    The important point is that
    \begin{equation*}
        \det\matP = -8c (1-c^2\sin(\xi/2)^2)^{3/2} e^{-\i\xi/2}
        \implies
        \vert\det\matP\vert
        \geq 8c(1-c^2)^{3/2} > 0,
        %-4\i\sqrt{1-\alpha^2}e^{-\i\xi}(1-e^{-\i\xi})(1-c^2)(2-c^2(1-\cos^2\xi))
    \end{equation*}
    as $0<c<1$.
    This proves that the iteration matrix is uniformly power bounded, and concludes this part of the proof.
\end{proof}

\begin{proposition}[stability on $\verticestt^{rt}$]
    \label{thm:stabvertexrt}
    Considering $\tun=1$ and $\tdeux=1$, 
    the lattice Boltzmann scheme is unstable for $c\in[0,1]$ (see Fig.~\ref{fig:vertices_scxi_rt}).
\end{proposition}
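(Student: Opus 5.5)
The plan is to set $\tun=\tdeux=1$ in \eqref{eq:defP3}, use the fact that $\polyPt$ is then self-inversive, and exhibit for each $c\in[0,1]$ one frequency $\xi$ at which the necessary condition (minimal polynomial in $\svNp$) fails. With $\tun=\tdeux=1$ the coefficients reduce to $\alphatz=-1$, $\alphatt=1$, $\alphatd = A$ and $\alphatu=-\bar A$, where
\begin{equation*}
A(\xi,c) = -1+2\cos\xi+2c^2(1-\cos\xi)+2\i c\sin\xi .
\end{equation*}
So $\polyPt = X^3+AX^2-\bar A X-1$. By Lemma~\ref{thm:P3E}, $\polyPt\in\ensegalite$, so the roots are symmetric with respect to the unit circle. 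Theorems~\ref{thm:strikwerda437} and \ref{thm:strikwerda432} then reduce the question to the derivative $\polyPdde=3X^2+2AX-\bar A$:
\begin{itemize}
\item $\polyPt\in\vNp$ if and only if $\polyPdde\in\vNp$;
\item $\polyPt\in\svNp$ if and only if $\polyPdde\in\Sp$.
\end{itemize}

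\textbf{Case $c=0$.} Here $A=2\cos\xi-1$ is real and
\begin{equation*}
\polyPt=(X-1)(X^2+2\cos\xi\,X+1),
\end{equation*}
with roots $1$ and $-e^{\pm\i\xi}$. At $\xi=\pi$, $1$ is a triple root. I would write $\matG(\pi,0,1,1)$ explicitly from \eqref{eq:matGs} with $\sun=\sdeux=2$, $d=0$. Then I would check that $\matG-\matId\neq0$, or that $(\matG-\matId)^2\neq 0$. Either way a nontrivial Jordan block appears at the eigenvalue $1$, so $\polymini\notin\svNp$ and the scheme is unstable by the necessary-condition proposition. If $\xi=\pi$ turns out to be diagonalizable, I would instead use $\xi=0$. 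There $\polyPt=(X-1)(X+1)^2$ and I would test whether $\matG+\matId$ has rank $1$.

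A caution on $\xi=0$ for general $c$. With $\sun=\sdeux=2$, $d=c^2$, all columns of $\matG+\matId$ coincide, so $\matG+\matId$ has rank one. Hence $\xi=0$ is harmless for every $c$, and the obstruction must be sought elsewhere.

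\textbf{Case $0<c\le1$.} I would run the Cohn--Schur test on $\polyPdde$. First, $|\polyPdde(0)|=|A|<3=|\polyPdde^\star(0)|$ for $\xi\neq0$, since $|A|\le 3$ with equality essentially only at $\xi=0$. Hence $\polyPdde\in\vNp$ if and only if the degree-one polynomial $\cohnschur(\polyPdde)=(9-|A|^2)X+(3\cdot 2A+2\bar A\cdot\bar A)$, up to conjugation conventions, has its root in the closed disk. That is,
\begin{equation*}
(9-|A|^2)^2-4\,|3A+\bar A^{2}|^2\ \ge 0 .
\end{equation*}
I would expand this quantity, which is a trigonometric polynomial in $\xi$ with coefficients polynomial in $c^2$, for instance with SageMath. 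I expect it to factor as $(1-\cos\xi)^k$ times a factor that is negative for small $\xi>0$ whenever $c\neq0$; a natural candidate is a multiple of $-c^2\sin^2\xi$. A Taylor expansion near $\xi=0$ is the first thing I would try. At $\xi=\pi$ the computation gives $\polyPdde=3X^2+2(4c^2-3)X+(3-4c^2)$, which appears to lie in $\vNp$, so the failure has to be located at intermediate $\xi$. Should the quantity stay nonnegative, I would instead look for a frequency where it vanishes together with a multiple root on the unit circle (as happens at $c=1$), and check non-diagonalizability of $\matG$ there directly.

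\textbf{Main obstacle.} Producing, uniformly in $c\in(0,1]$, an explicit $\xi$ with the Cohn--Schur quantity strictly negative is the hard step. The endpoint $c=1$ may need separate treatment, for instance at $\xi=\pi/2$, analogously to Prop.~\ref{thm:stabvertexrb}.
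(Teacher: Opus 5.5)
Your reduction matches the paper's. $A$ is the paper's $\alpha$, and you reach the correct criterion $(9-|A|^2)^2-4|3A+\bar A^2|^2\ge 0$ for $\polyPdde\in\vNp$. Your case $c=0$ also works: $\matG(\pi,0,1,1)$ has first row $(1,2,2)$ and identity entries elsewhere, so it is not the identity although $1$ is its only eigenvalue, and $\polymini=(X-1)^2$.

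\textbf{The gap: the case $0<c<1$ is not proved.} This case is the heart of the statement, and you never exhibit a frequency at which $\polyPt\notin\vNp$. Worse, the place you propose to look is the wrong one. The paper shows the quantity equals $48(1-\cos\xi)\,\beta(c^2,\cos\xi)$, with $\beta(c^2,1)=8-6c^2$. Equivalently, for small $\xi$ it behaves like $(192-144c^2)\xi^2>0$. So a Taylor expansion at $\xi=0$ cannot reveal instability, and your candidate $-c^2\sin^2\xi$ has the wrong sign. At $\xi=\pi$ the quantity equals $768\,c^6(1-c^2)\ge 0$, as you suspected. The negative set therefore lies strictly inside $(0,\pi)$; since $\beta(0,C)=(1+C)^3$, it shrinks toward $\xi=\pi$, not $\xi=0$, as $c\to 0$. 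The missing idea is an explicit interior witness. The paper takes $\cos\xi=c^2-1\in(-1,0)$, where
\begin{equation*}
\beta(c^2,c^2-1)=-c^4(1-c^2)\bigl(c^8+11c^4(1-c^2)+27(1-c^2)^2\bigr)<0 .
\end{equation*}
There the root of $\cohnschur(\polyPdde)$ lies outside the closed disk, so $\polyPt\notin\vNp$.

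\textbf{The case $c=1$ is only announced.} Here the quantity reduces to $48\cos^2\xi\sin^2\xi\ge 0$, so $\polyPt\in\vNp$ for every $\xi$ and the Cohn--Schur route cannot succeed. Instability must come from a nontrivial Jordan block, which you announce but do not establish. Your guess $\xi=\pi/2$ is the right one, but it must be checked. There
\begin{equation*}
\matG=\begin{pmatrix}-1&0&0\\-2i&-i&-2i\\0&0&-i\end{pmatrix},
\end{equation*}
$-i$ is a double eigenvalue and $\matG+i\,\mathrm{Id}$ has rank two. Hence $\polymini=(X+1)(X+i)^2\notin\svNp$. The paper also uses $\xi=\pi$, where $\polymini=(X-1)(X+1)^2$.

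\textbf{A minor error.} $|A|=3$ holds only at $(c,\xi)=(0,\pi)$, not at $\xi=0$, where $A=1$. For $c>0$ one does have $|A|<3$ for all $\xi$; the paper shows $A$ lies in the triangle with vertices $-3$, $1$, $1+2i$. So your Cohn--Schur step is legitimate, but the justification you give for it is wrong.
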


\begin{figure}[htb]
    \includegraphics[width=.5\textwidth]{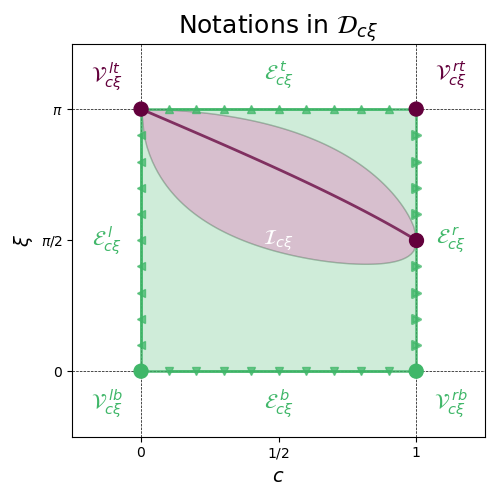}
    \caption{Stability for $(\tun,\tdeux)\in\verticestt^{rt}$.}
    \label{fig:vertices_scxi_rt}
\end{figure}

\begin{proof}[Proof of Prop.~\ref{thm:stabvertexrt}]
    Assuming $\tun=1$ and $\tdeux=1$, the characteristic polynomial given in \eqref{eq:defP3} yields to
    \begin{equation*}
        \polyPt(X;\xi,c,1,1) = X^3 + \alpha X^2 - \bar\alpha X - 1,
    \end{equation*}
    with
    \begin{equation*}
        \alpha = -1+2c^2(1-\cos\xi)+2\cos\xi + 2\i c\sin\xi.
    \end{equation*}
    % As $\cohnschur(\polyPt) = 0$, 
    As $\polyPdcs = 0$ ($\polyPt\in\ensegalite$), 
    we have 
    $\polyPt\in\vNp\iff\polyPdde\in\vNp$ thanks to Th.~\ref{thm:strikwerda437}. Computing $\polyPdde=3X^2+2\alpha X-\bar\alpha$, we obtain that
    \begin{equation*}
        \cohnschur(\polyPdde) = \polyPudecs = (9-|\alpha|^2)X + 2(3\alpha+\bar\alpha^2).
    \end{equation*}
    We then reduce
    \begin{align*}
        \polyPt\in\vNp &\iff
        \polyPdde\in\vNp \iff
        \left\lbrace\begin{aligned}
        &|\alpha|^2<9 \text{ and } 2|3\alpha+\bar\alpha^2| \leq 9-|\alpha|^2\\
        \text{ or }\\
        &|\alpha|^2=9 \text{ and } 3\alpha+\bar\alpha^2=0,
        \end{aligned}\right.\\
        &\iff |\alpha|^2\leq9 \text{ and } 2|3\alpha+\bar\alpha^2| \leq 9-|\alpha|^2.
    \end{align*}
    Thanks to lemma~\ref{thm:techlemtvtwun}, the inequality $\vert\alpha\vert^2\leq9$ is always true.
    We obtain
    \begin{equation*}
        (9-|\alpha|^2)^2 - 4|3\alpha+\bar\alpha^2|^2 = 48 (1-\cos\xi) \beta(c^2,\cos\xi)
    \end{equation*}
    with
    \begin{align*}
        \beta(c^2, C) &=
        (1-c^2)(c^6+5c^4-8c^2+1)
        + C (1-c^2)(-3c^6-5c^4+6c^2+3)\\
        &\quad + C^2(-3c^8+8c^6-13c^4+6c^2+3)
        + C^3(1+c-c^2)^2(1-c-c^2)^2.
    \end{align*}
    Fixing $c\in[0,1]$, we consider $\beta(c^2,C)$ as a polynomial of degree 3 in terms of $C$.
    
    First, fixing $c=0$, we have
    $\beta(0, C)=1+3C+3C^2+C^3=(1+C)^3\geq0$, then $\polyPt\in\vNp$.
    To conclude the case $\edgescxi^l\cup\verticescxi^{lb}\cup\verticescxi^{lt}$, we write the amplification matrix $\matG$:
    \begin{equation*}
        \matG = \begin{pNiceArray}{ccc}
            1 & 2 & 2 \\
            0 & -e^{-\i\xi} & 0 \\
            0 & 0 & -e^{\i\xi}
        \end{pNiceArray}.
    \end{equation*}
    For $\xi\in[0,\pi]$, the minimal polynomial of the amplification matrix reads
    \begin{align*}
        \polymini(X; 0, 0, 1, 1) &= (X-1)(X+1), \\
        \polymini(X; \xi, 0, 1, 1) &
        =(X-1)(X+e^{-\i\xi})(X+e^{\i\xi}), & 0<\xi<\pi,\\
        \polymini(X; \pi, 0, 1, 1)
        &=(X-1)^2.
    \end{align*}
    It is then a simple von Neumann polynomial only for $\xi\in[0,\pi)$. The scheme is then clearly unstable for $c=0$.

    Second, fixing $c=1$, we have
    $\beta(1, C)=C^2(1+C)\geq0$, then $\polyPt\in\vNp$. To conclude the case $\edgescxi^r\cup\verticescxi^{rb}\cup\verticescxi^{rt}$, we write the amplification matrix $\matG$:
    \begin{equation*}
        \matG = \begin{pNiceArray}{ccc}
            -1 & 0 & 0 \\
            2e^{-\i\xi} & e^{-\i\xi} & 2e^{-\i\xi} \\
            0 & 0 & -e^{\i\xi}
        \end{pNiceArray}.
    \end{equation*}
    The minimal polynomial of the amplification matrix reads
    \begin{equation*}
        \polymini(X; \xi, 1, 1, 1)=(X+1)(X-e^{-\i\xi})(X+e^{\i\xi})
    \end{equation*}
    for $\xi\in(0,\pi]$ and $\mu(X;0,1,1,1)=(X-1)(X+1)$. It is then a simple von Neumann polynomial only for $\xi\notin\lbrace\pi/2,\pi\rbrace$. The scheme is then clearly unstable for $c=1$.

    Third, fixing $\xi=0$, the equality $\cos\xi=1$ implies immediately that $\polyPt\in\vNp$. To conclude the case $\edgescxi^b$, we write the amplification matrix $\matG$:
    \begin{equation*}
        \matG = \begin{pNiceArray}{ccc}
            1-2c^2 & 2(1-c^2) & 2(1-c^2) \\
            c(1+c) & -1+c+c^2 & c(1+c) \\
            -c(1-c) & -c(1-c) & -1-c+c^2
        \end{pNiceArray}.
    \end{equation*}
    The characteristic polynomial reads, as $\alpha=1$,
    $\polyPt(X;0,c,1,1)=(X+1)^2(X-1)$. As $\matG^2$ is the identity matrix for $c\in[0,1]$, the minimal polynomial of the amplification matrix is a simple von Neumann polynomial.

    Fourth, fixing $\xi=\pi$, we have 
    $\beta(c^2,-1)= 8c^6(1-c^2)\geq0$, then $\polyPt\in\vNp$. To conclude the case $\edgescxi^t$, we write the amplification matrix $\matG$:
    %       + c^6 + 5c^4 -8c^2 +1
    % - c^8 -5c^6 + 8c^4 - c^2
    %       +3c^6 + 5c^4 -6c^2 -3
    % -3c^8 -5c^6 + 6c^4 +3c^2
    % -3c^8 +8c^6 -13c^4 +6c^2 +3
    % - c^8 +6c^6 -11c^4 +6c^2 -1
    %--------------------------
    % -8c^8 +8c^6
    % 
    %( (1-c^2)^2 - c^2 )^2
    %( 1 -3c^2 + c^4)^2
    %1 + 9c^4 + c^8 -6c^2 + 2c^4 -6c^6
    % c^8 -6c^6 +11c^4 -6c^2 + 1
    \begin{equation*}
        \matG = \begin{pNiceArray}{ccc}
            1-2c^2 & 2(1-c^2) & 2(1-c^2) \\
            -c(1+c) & 1-c-c^2 & -c(1+c) \\
            c(1-c) & c(1-c) & 1+c-c^2
        \end{pNiceArray}.
    \end{equation*}
    The characteristic polynomial reads, as $\alpha=-4(1-c^2)$,
    $\polyPt(X;\pi,c,1,1)=(X-1)(X^2-2(1-2c^2)X+1)$.
    As the polynomial $X^2-2(1-2c^2)X+1$ has two complex conjugate roots of modulus one with nonzero imaginary part for $c\in(0,1)$, the polynomial $\polyPt$ is simple von Neumann for $0<c<1$.

    Finally, we treat the case $(c,\xi)\in\squarecxiinterior$. For a fixed value of $c$, the function $\beta(c^2,C)$ is a polynomial of degree 3 in the $C$ variable with a non negative coefficient of higher degree. 
    We could write the roots and observe that exactly two of them are localized in $(-1,1)$, the third one being in $(-\infty,-1)$. The area where $\beta$ is negative is given in Fig.~\ref{fig:vertices_scxi_rt} for illustration purposes. In order to prove that the scheme is unstable for every $c\in(0,1)$, we only prove that there exists a value of $\xi\in(0,\pi)$ defined by $\cos\xi=-1+c^2$ such that 
    $\beta(c^2,-1+c^2)<0$ for every $c\in(0,1)$. We have
    \begin{align*}
        \beta(c^2,-1+c^2) &= -c^4(1-c^2)
        (c^8-11c^6+38c^4-54c^2+27),\\
        &=-c^4(1-c^2) \left(
            c^8 + 11c^4(1-c^2) + 27(1-c^2)^2
        \right) < 0,
    \end{align*}
    what ends the proof.
\end{proof}

\begin{lemma}[technical lemma]\label{thm:techlemtvtwun}
    Let's $(c,\xi)\in\squarecxi$ defined in \eqref{eq:domaincxi}. In the complex plane, the parameter $\alpha$ defined by
    \begin{equation*}
        \alpha = -1+2c^2(1-\cos\xi)+2\cos\xi + 2\i c\sin\xi,
    \end{equation*}
    lies in the rectangular triangle defined by its three vertices $-3$, $1$ and $1+2\i$ (see Fig.~\ref{fig:alphaleqtrois}). 
    As a consequence, $\vert\alpha\vert\leq3$ and 
    \begin{equation*}
        \vert\alpha\vert=3
        \iff
        \alpha=-3
        \iff
        (c,\xi)\in\verticescxi^{lt} \ 
        (c=0\text{ and }\xi=\pi)
    \end{equation*}
\end{lemma}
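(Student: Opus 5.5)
The plan is to view $\alpha$ as the image of the rectangle $\squarecxi$ under an explicit map and to pin down its location with linear inequalities. Write $C=\cos\xi\in[-1,1]$ and $S=\sin\xi\in[0,1]$, so that
\begin{equation*}
    \Re\alpha = -1+2c^2(1-C)+2C, \qquad \Im\alpha = 2cS .
\end{equation*}
The triangle with vertices $-3$, $1$ and $1+2\i$ is the set of points $x+\i y$ with $y\geq0$, $x\leq1$ and $y\leq(x+3)/2$, that is $x-2y+3\geq0$. I will check each of these three inequalities.

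The first two are immediate. Since $c\geq0$ and $S\geq0$, we have $y=2cS\geq0$. Moreover $1-x=2(1-C)(1-c^2)\geq0$ because $c\leq1$.

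The third inequality is the main step. We have
\begin{equation*}
    x-2y+3 = 2+2C+2c^2(1-C)-4cS .
\end{equation*}
I plan to use the half-angle substitutions $1+C=2\cos^2(\xi/2)$, $1-C=2\sin^2(\xi/2)$ and $S=2\sin(\xi/2)\cos(\xi/2)$. This gives
\begin{equation*}
    x-2y+3 = 4\bigl(\cos(\xi/2)-c\sin(\xi/2)\bigr)^2\geq0 .
\end{equation*}
The quantity is a perfect square, so the third inequality holds. Hence $\alpha$ lies in the closed triangle.

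For the bound on $|\alpha|$, I use that the modulus is convex, so its maximum over the triangle is attained at a vertex. The vertex moduli are $3$, $1$ and $\sqrt5<3$. Therefore $|\alpha|\leq3$. Equality can only occur at the vertex $-3$: the other two vertices have modulus less than $3$, and the disk of radius $3$ meets the triangle only at $-3$. The triangle lies in the half-plane $x\geq-3$, because $x\geq-3+2y\geq-3$, and the only point of the closed disk with $x\leq-3$ is $-3$ itself.

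Finally I will solve $\alpha=-3$. This forces $x=-3$ and $y=0$. The condition $x=-3$ means $2(1+C)+2c^2(1-C)=0$, and both terms are nonnegative, so both vanish: $C=-1$ and $c^2(1-C)=2c^2=0$. Thus $\xi=\pi$ and $c=0$, which is the vertex $\verticescxi^{lt}$. Conversely, substituting $c=0$ and $\xi=\pi$ gives $\alpha=-3$.

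I expect no real obstacle here; the only step that needs an idea is spotting the perfect-square factorization of $x-2y+3$ through the half-angle identities.
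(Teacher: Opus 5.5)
Your proof follows the paper's argument: the same half-plane description of the triangle (you drop the redundant bounds $\Re\alpha\geq-3$ and $\Im\alpha\leq 2$ that the paper also lists), the same perfect square, and the same resolution of $\Re\alpha=-3$, $\Im\alpha=0$. Your $4(\cos(\xi/2)-c\sin(\xi/2))^2$ coincides with the paper's $2(\sqrt{1+\cos\xi}-c\sqrt{1-\cos\xi})^2$ for $\xi\in[0,\pi]$.

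The one step to repair is your justification of the equality case.

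\emph{Why it fails.} Knowing that the triangle lies in $\{x\geq-3\}$ says nothing about whether the circle $|z|=3$ meets it at a point with $x>-3$. The whole closed disk lies in that half-plane, and the triangle with vertices $-3$, $1$, $3i$ passes the same test while touching the circle at $3i$. Mere convexity of $|z|$ also does not exclude maximizers away from the vertices.

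\emph{Fix.} Either of the following closes the step.
\begin{enumerate}
\item Use that $|z|^2$ is strictly convex along every segment, so its maximum over the triangle is attained only at vertices, and $-3$ is the only vertex of modulus $3$.
\item Bound directly, using $0\leq y\leq (x+3)/2$:
\[
x^2+y^2\leq x^2+\tfrac{1}{4}(x+3)^2=9-\tfrac{1}{4}(9-5x)(x+3)<9 \quad \text{for } -3<x\leq 1.
\]
This forces $x=-3$, and then $y=0$.
\end{enumerate}
The paper itself only asserts this step, so with either fix your write-up is complete.
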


\begin{figure}[htb]
    \includegraphics[width=.5\textwidth]{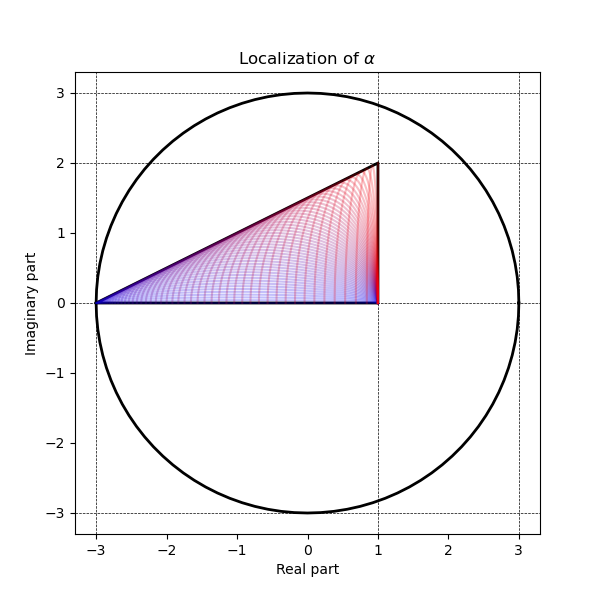}
    \caption{Graphical illustration of lemma~\ref{thm:techlemtvtwun}: the parameter $\alpha$ lies in a rectangular triangle included in the circle of radius $3$.}
    \label{fig:alphaleqtrois}
\end{figure}

\begin{proof}
    The parameter $\alpha$ is in the given rectangular triangle if, and only if 
    \begin{equation*}
        -3\leq\Re(\alpha)\leq1,
        \quad
        0\leq\Im(\alpha)\leq2,
        \quad\text{and}\quad
        2\Im(\alpha)\leq3+\Re(\alpha).
    \end{equation*}
    Rewritting
    $\Re(\alpha) = 1 - 2(1-c^2)(1-\cos\xi)$ 
    implies $-3\leq\Re(\alpha)\leq1$.
    Moreover, $\Im(\alpha) = 2c\sin\xi$ implies $0\leq\Im(\alpha)\leq2$ as $0\leq c\leq 1$ and $0\leq\xi\leq\pi$.
    Finally, we compute
    \begin{equation*}
        3+\Re(\alpha)-2\Im(\alpha) =
        % 4 - 2(1-c^2)(1-\cos\xi) - 4c\sin\xi =
        % 2(1+\cos\xi) -4c\sin\xi c + 2(1-\cos\xi)c^2 =
        2 \bigl(
            \sqrt{1+\cos\xi} - c \sqrt{1-\cos\xi}
        \bigr)^2 \geq 0.
    \end{equation*}
% (1+C) - 2c S + (1-C) c^2
% (1+C) ( 1 - S/(1+C) c )^2
% car S^2/(1+C)^2 = (1-C^2)(1+C)^2 = (1-C)/(1+C)
    To conclude the proof, we observe that there is only one point in the triangle that intersects the circle of radius $3$: the point $-3$. Then $\vert\alpha\vert=3$ is equivalent to $\alpha=-3$. Solving the system $\Re(\alpha)=-3$ and $\Im(\alpha)=0$ yields $c=0$ and $\xi=\pi$.
\end{proof}

%%%%%%%%%%%%%%%%%%%%%%%%%%%%%%%%%%%%%%%%%%%%%%%%%%

%%%%%%%%%%%%%%%%%%%%%%%%%%%%%%%%%%%%%%%%%%%%%%%%%%
% SECTION CONVERGENCE
%%%%%%%%%%%%%%%%%%%%%%%%%%%%%%%%%%%%%%%%%%%%%%%%%%
\section{Convergence and discrete waves}\label{sec:convergence}

In order to avoid dealing with borderline cases, we here consider $(\sun, \sdeux)\in(0, 2)^2$.
The fact of having $\sun\neq0$ and $\sdeux\neq 0$ ensures second-order consistency regardless of the specific values of these parameters.
We also consider parameters where the finite difference scheme whose amplification polynomial is the characteristic polynomial of $\matG$ is $L^2$-stable.
In this way, we can easily re-use known result on this one, see Remark \ref{rem:correspondingFD}.
This gives the following result.

\begin{proposition}[Theorem 10.6.1 in \cite{strikwerda_finite_2004}]\label{prop:consistencyFourier}
    Let $|c|\leq 1$, $0<\sun<2$, and $0<\sdeux\leq\sdeux^\star(\sun)$.
    Then, there exists a unique eigenvalue $z_{\varphi}(\xi)$ defined for $|\xi|\leq \xi_0$ for some $\xi_0>0$ such that $z_{\varphi}(\xi) = 1 - i c \xi + {o}(\xi)$.
    Moreover, there exists a constant $C>0$ such that 
    \begin{equation}\label{eq:consistencyFourier}
        \left | \frac{e^{-ic\xi\spacestep} -z_{\varphi}(\xi\spacestep)}{\timestep}\right |\leq C\spacestep^2 (1+|\xi| )^3.
    \end{equation}
\end{proposition}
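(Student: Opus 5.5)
At $\xi=0$ the characteristic polynomial factors as $\polyPt(X;0,c,\tun,\tdeux)=(X-1)(X+\tun)(X+\tdeux)$. Since $0<\sun,\sdeux<2$, we have $|\tun|,|\tdeux|<1$, so $z=1$ is a simple root and the other two roots lie strictly inside the unit disk. The plan is to continue the simple root $1$ in $\xi$ and then compare it with $e^{-ic\xi}$ through the modified equation.

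First, the coefficients $\alphatz,\alphatu,\alphatd$ are real-analytic (trigonometric polynomials) in $\xi$. Because $\partial_X\polyPt(1;0)=(1+\tun)(1+\tdeux)\neq0$, the implicit function theorem gives $\xi_0>0$ and an analytic branch $z_\varphi(\xi)$ with $z_\varphi(0)=1$, for $|\xi|\le\xi_0$. Shrinking $\xi_0$ if necessary, continuity of roots keeps the other two roots at distance bounded below from $1$, so $z_\varphi$ is the unique eigenvalue near $1$.

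Second, I would expand $z_\varphi$ in powers of $\xi$. Differentiating $\polyPt(z_\varphi(\xi);\xi)=0$ at $\xi=0$ and using $\partial_\xi\alphatu(0)=ic\,\tdeux(1+\tun)$ and $\partial_\xi\alphatd(0)=ic(1+\tun)$ gives
\[
z_\varphi'(0)=-\frac{\partial_\xi\polyPt(1;0)}{\partial_X\polyPt(1;0)}=-\frac{ic(1+\tun)(1+\tdeux)}{(1+\tun)(1+\tdeux)}=-ic,
\]
so $z_\varphi(\xi)=1-ic\xi+o(\xi)$. The main step is to show that with $d=c^2$ the Taylor expansions of $z_\varphi(\xi)$ and $e^{-ic\xi}$ coincide up to order $\xi^2$, that is, $z_\varphi(\xi)-e^{-ic\xi}=O(\xi^3)$. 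I would do this by substituting the ansatz $z=e^{-ic\xi}(1+a\xi^2+b\xi^3)$ into $\polyPt$ and checking that the $\xi^2$ coefficient forces $a=0$ exactly when $d=c^2$. This is the Fourier counterpart of the second-order modified equation \eqref{eq:modEq2nd}: the $\xi^2$ term of $\log z_\varphi$ is proportional to $(\tfrac12-\tfrac1{\sun})(d-c^2)$, which vanishes. Alternatively, I would invoke Strikwerda's Theorem 10.6.1 directly. Since Remark~\ref{rem:correspondingFD} identifies $\polyPt$ as the amplification polynomial of a multistep finite difference scheme that is consistent of order $2$ (from the consistency proposition), the theorem yields the order-of-accuracy estimate once stability holds. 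Stability is supplied by Prop.~\ref{thm:stabsquare} and Props.~\ref{thm:stabedgeb}--\ref{thm:stabedgel}, because $0<\sdeux\le\sdeux^\star(\sun)$ places $(\tun,\tdeux)$ in the stable region with $\polyPt\in\svNp$ for all $\xi$.

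Third, I would deduce \eqref{eq:consistencyFourier}, arguing in the two frequency regimes. For $|\xi\spacestep|\le\xi_0$, analyticity gives $|e^{-ic\xi\spacestep}-z_\varphi(\xi\spacestep)|\le K|\xi\spacestep|^3$; dividing by $\timestep=\spacestep/\lambda$ yields $C\spacestep^2|\xi|^3$. Only this regime is needed, since $z_\varphi$ is only defined for $|\xi\spacestep|\le\xi_0$; if one extends to larger frequencies, both terms are bounded by $1$ in modulus by stability, and $1/\timestep\le C\spacestep^2|\xi|^3$ whenever $|\xi|\spacestep\ge\xi_0$. The main obstacle is the $\xi^2$ cancellation: it requires an explicit but routine symbolic expansion (SageMath-style) of the cubic's root, and I would verify it with the ansatz described above rather than through the general theory.
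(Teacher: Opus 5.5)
Your proposal is correct and follows essentially the paper's route. The paper takes existence and uniqueness of the physical root and the accuracy bound from Strikwerda's Theorem 10.6.1, and obtains the power $(1+|\xi|)^3$ by Taylor expanding the characteristic equation. Your implicit-function-theorem argument and second-order expansion simply make this explicit; the expansion does give $z_\varphi(\xi)=1-ic\xi-\tfrac12c^2\xi^2+\mathcal{O}(\xi^3)$, consistent with \eqref{eq:taylorPhysicalMode}. One minor point: the hypotheses $0<\sun<2$ and $0<\sdeux\le\sdeux^\star(\sun)$ place $(\tun,\tdeux)$ in the open square, so only Prop.~\ref{thm:stabsquare} is needed for stability, not the edge propositions.
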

\begin{proof}
    All except the power of $1+|\xi|$ is Theorem 10.6.1 in \cite{strikwerda_finite_2004}.
    This power is obtained by Taylor expanding the characteristic equation.
    %  and solve order-by-order, which yields 
    % $z_{\varphi}(\xi) = 1 - i c \xi - \tfrac{1}{2}c^2\xi^2 + \mathcal{O}(\)$.
    % \begin{equation*}        
    %     z_{\varphi}(\xi) = 1 - i c \xi - \tfrac{1}{2}c^2\xi^2 + i c \Bigl ((1-c^2)\bigl ( ( \tfrac{1}{\sun}{-}\tfrac{1}{2})( \tfrac{1}{\sdeux}{-}\tfrac{1}{2}) {-}\tfrac{1}{12}\bigr ) {+} \tfrac{c^2}{6}\Bigr )\xi^3 + \mathcal{O}(\xi^4).
    % \end{equation*}
\end{proof}
Equation \eqref{eq:consistencyFourier} is a way of restating that the scheme is second-order accurate.
Moreover, the power in $\xi$ on the right-hand side of \eqref{eq:consistencyFourier} dictates the minimal smoothness of the initial datum to observe full second-order convergence.
The following convergence theorem is adapted from finite difference, and based on the fact that the scheme behaves essentially as a one-step scheme with symbol $z_{\varphi}(\xi)$, as the other eigenvalues are mastered by stability. 
\begin{theorem}[Convergence]
    Let $|c|\leq 1$, $0<\sun<2$, and $0<\sdeux\leq\sdeux^\star(\sun)$.
    Let $\mz^{\circ}\in H^{\sigma}(\mathbb{R})$ in \eqref{eq:initialcondition} with $\sigma\geq 3$ and $\mz_j^{0}=\mz^{\circ}(j\spacestep)$---which is possible thanks to the smoothness assumption---and initial non-conserved moments at equilibrium, namely
    \begin{equation*}
        \mun_j^0 = \lambda c\, \mz_j^{0},
        \qquad 
        \mdeux_j^0 = \tfrac{1}{2} \lambda^2 c^2\, \mz_j^{0}.
    \end{equation*}
    Then, for all $T>0$, there exists $C_T>0$ such that 
    \begin{equation*}
        \Bigl(
            \spacestep \sum_{j\in\mathbb{Z}} {\bigl\vert\mz(n\timestep, j\spacestep) - \mz_j^n\bigr\vert}^2
        \Bigr)^{1/2}
        \leq C_T \spacestep^2 {\lVert \mz^{\circ} \rVert}_{H^3},
        \qquad 0\leq n\timestep\leq T,
    \end{equation*}
    where $\mz$ is the solution of \eqref{eq:pde} and \eqref{eq:initialcondition}.
\end{theorem}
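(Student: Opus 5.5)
We work entirely in Fourier and follow the classical convergence argument for multistep finite difference schemes (Strikwerda, Chapter 10), using the representation of Remark~\ref{rem:correspondingFD}: $\mz_j^n$ satisfies the three-step scheme with amplification polynomial $\polyPt$, started from $\mz^0,\mz^1,\mz^2$ determined by the equilibrium initialization. Under the hypotheses, Thm.~\ref{thm:main} together with Prop.~\ref{thm:stabsquare} (and the fact that for $0<\sun<2$, $0<\sdeux\leq\sdeux^\star(\sun)$ we are in the open square where $\polyPt\in\svNp$ iff $\polyPt\in\vNp$) gives $\polyPt(\cdot;\xi)\in\svNp$ for all $\xi$. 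Hence the multistep scheme is $\ell^2$-stable in the sense of Remark~\ref{rem:correspondingFD}. By Parseval it suffices to bound, for $\theta=\xi\spacestep\in[-\pi,\pi]$,
\begin{equation*}
\int_{-\pi/\spacestep}^{\pi/\spacestep}\bigl|e^{-ic\lambda\xi n\timestep}\hat{\mz}^{\circ}(\xi)-\hat{\mz}^n(\xi)\bigr|^2\,\mathrm{d}\xi ,
\end{equation*}
plus an aliasing/sampling term $\|\mz^{\circ}-\text{sampled}\|$ controlled by $\spacestep^{\sigma}\|\mz^\circ\|_{H^\sigma}$ with $\sigma\geq3$, and the high-frequency part $|\xi|>\pi/\spacestep$ of the exact solution, bounded by $\spacestep^3\|\mz^\circ\|_{H^3}$.

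Key step: decompose $\hat{\mz}^n(\theta)=\sum_k A_k(\theta)z_k(\theta)^n$ over the roots of $\polyPt$, where $z_\varphi$ is the consistent root of Prop.~\ref{prop:consistencyFourier} and the two others are parasitic. For $|\theta|\leq\theta_0$ the roots are separated (at $\theta=0$ they are $1,-\tun,-\tdeux$ with $|\tun|,|\tdeux|<1$), so the $A_k$ are smooth and obtained by solving a Vandermonde system with right-hand side $(\hat{\mz}^0,\hat{\mz}^1,\hat{\mz}^2)$. The crucial computation is that, with equilibrium initialization, $\hat{\mz}^m=(e^{-ic\theta})^m\hat{\mz}^0+O(\theta^3)\hat{\mz}^0$ for $m=1,2$ (the first steps from equilibrium reproduce the Lax--Wendroff-type expansion up to second order; this is checked by expanding $(\matM\matT\matM^{-1}\matR)^m$ applied to $(1,c\lambda,\tfrac12c^2\lambda^2)^T$). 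Consequently the parasitic amplitudes satisfy $A_k=O(\theta^3)\hat{\mz}^0$ and $A_\varphi=\hat{\mz}^0(1+O(\theta^3))$. Then
\begin{equation*}
|e^{-ic\theta n}-z_\varphi^n|\leq n|e^{-ic\theta}-z_\varphi|\leq n\,C\timestep\,\spacestep^2(1+|\xi|)^3/\timestep\cdot\timestep\lesssim T\spacestep^2(1+|\xi|)^3,
\end{equation*}
using $|z_\varphi|\leq1$ and \eqref{eq:consistencyFourier}, while the parasitic terms give $|A_k z_k^n|\lesssim|\theta|^3|\hat{\mz}^0|\leq\spacestep^3|\xi|^3|\hat{\mz}^0|$, uniformly in $n$ by stability. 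Integrating against $|\hat{\mz}^\circ|^2$ yields $C_T\spacestep^2\|\mz^\circ\|_{H^3}$.

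For $|\theta|\geq\theta_0$, use uniform stability of the multistep scheme: $|\hat{\mz}^n|\leq C(|\hat{\mz}^0|+|\hat{\mz}^1|+|\hat{\mz}^2|)\leq C'|\hat{\mz}^0|$, and bound the error trivially by $C|\hat{\mz}^\circ(\xi)|\leq C\theta_0^{-3}\spacestep^3|\xi|^3|\hat{\mz}^\circ(\xi)|$, again of the required order. The main obstacle is the low-frequency treatment near roots that may coalesce: uniform control of $A_k$ requires root separation on $|\theta|\leq\theta_0$, which holds at $\theta=0$ by the hypotheses on $\sun,\sdeux$ and extends by continuity; elsewhere, possible double roots inside the unit disc are harmless but must be handled by the stability bound rather than the explicit decomposition. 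The second delicate point is verifying the third-order matching of the starting values $\hat{\mz}^1,\hat{\mz}^2$, which is where the equilibrium initialization and $d=c^2$ are used.
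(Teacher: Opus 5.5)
Your strategy is the one the paper uses. Its proof simply invokes Strikwerda's Theorem 10.6.2, combined with the multistep recast of Remark~\ref{rem:correspondingFD}, the accuracy estimate \eqref{eq:consistencyFourier}, and the accuracy of the equilibrium initialization. Your check of the starting values is correct and even stronger than needed. The first step from equilibrium is exactly Lax--Wendroff, and $\hat{\mz}^2-e^{-2ic\theta}\hat{\mz}^0=O(\theta^3)\hat{\mz}^0$. The paper only uses first-order accurate starting values, i.e.\ $O(\theta^2)$; these suffice because starting errors are propagated by a bounded operator and not accumulated.

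However, one step of your low-frequency argument is false: the three roots are \emph{not} separated near $\theta=0$ in general. At $\theta=0$ they are $1,-\tun,-\tdeux$, and the two parasitic ones coincide whenever $\sun=\sdeux$. Your hypotheses allow this for every $s\in(0,s^\star]$ (Cor.~\ref{thm:corSRT}), including $\sun=\sdeux=11/10$ used in the experiments. At the Lax--Wendroff point $\sun=\sdeux=1$ it is worse: $\polyPt(X;\xi,c,0,0)=X^2\bigl(X-z_\varphi(\xi)\bigr)$, so $0$ is a double root for every $\xi$. The Vandermonde system defining the $A_k$ is then singular at $\theta=0$ (or everywhere), and $A_k=O(\theta^3)\hat{\mz}^0$ does not follow. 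The paper's expansions of $z_{\sigma,1},z_{\sigma,2}$ indeed carry a factor $1/(\sun-\sdeux)$. Only the consistent root is guaranteed to be isolated, since $-\tun,-\tdeux\neq 1$.

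The repair is to avoid the modal decomposition; this is the standard mechanism behind such convergence results for multistep schemes.
\begin{itemize}
\item Since $z_\varphi(\theta)$ is a root of $\polyPt$, the sequence $z_\varphi(\theta)^n\hat{\mz}^0$ solves the three-step recurrence exactly. Hence so does $\hat w^n=\hat{\mz}^n-z_\varphi(\theta)^n\hat{\mz}^0$.
\item Its starting values are $\hat w^0=0$ and $\hat w^1,\hat w^2=O(\theta^3)\hat{\mz}^0$. This follows from your initialization computation combined with $z_\varphi=e^{-ic\theta}+O(\theta^3)$ from \eqref{eq:consistencyFourier}.
\item The uniform power-boundedness of the companion matrix, which is what $\polyPt\in\svNp$ for all $\xi$ provides, gives $|\hat w^n|\le C\max_{m\le 2}|\hat w^m|$. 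This holds uniformly in $n$ and $|\theta|\le\theta_0$, with no assumption on the parasitic roots.
\end{itemize}
Together with your bound on $|e^{-ic\theta n}-z_\varphi^n|$ and your treatment of high frequencies and aliasing, this closes the proof. Equivalently, you can split off only $z_\varphi$ and handle the two parasitic modes jointly via the quotient $\polyPt/(X-z_\varphi)$, whose roots stay in a disc of radius $r<1$ for small $\theta$.
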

\begin{proof}
    This is \cite[Theorem 10.6.2]{strikwerda_finite_2004} using Prop. \ref{prop:consistencyFourier} and the fact that initializations at equilibrium yield first-order accurate (see \cite{bellotti2024initialisation}) initialization schemes, given in \eqref{eq:FD} from Remark \ref{rem:correspondingFD}.
    % These initialization schemes read (in Fourier-transformed variables)
    % \begin{equation*}
    %     \hat{\mz}^k(\xi) = \matG^k (\xi)
    %     \begin{pmatrix}
    %         1\\
    %         \lambda c\\
    %         \tfrac{1}{2}\lambda^2 c^2
    %     \end{pmatrix}
    %     \hat{\mz}^0(\xi), \qquad k = 1, 2.
    % \end{equation*}
\end{proof}

In term of expected order of convergence for non-smooth initial data, if $\mz^{\circ}\in H^{\sigma -}(\mathbb{R})$ for the Sobolev regularity, and $\mz^{\circ}\in B_{2, \infty}^{\sigma}(\mathbb{R})$ for the Besov regularity, from \cite{strikwerda_finite_2004} for the Sobolev regularity (up to ``invisible'' logarithmic terms) and from \cite{brenner2006besov}, we expect the empirical order of convergence with respect to the $L^2$ norm in space be $\mathcal{O}(\spacestep^{\min(2, {2}/{3}\sigma)})$.

We finish on wave propagation in the scheme by considering that---if the three roots of the characteristic polynomial that we now call $z_{\varphi}(\xi)$, $z_{\sigma, 1}(\xi)$, and $z_{\sigma, 2}(\xi)$  are distinct for a given $\xi$---any moment (or distribution function)\footnote{Here, we illustrate things for the conserved moment $\mz$.} can be written 
\begin{equation*}
    \hat{\mz}^n(\xi) = z_{\varphi}(\xi)^n C_{\varphi}(\xi) + z_{\sigma, 1}(\xi)^n C_{\sigma, 1}(\xi) +  z_{\sigma, 2}(\xi)^n C_{\sigma, 2}(\xi),
\end{equation*}
where $C_{\varphi}(\xi)$, $C_{\sigma, 1}(\xi)$, and $C_{\sigma, 2}(\xi)$ are determined by the initialization, with the last two being generally small at least close to $\xi = 0$.
This formula expresses that one can interpret the numerical solution as a superposition of the action of three pseudo-schemes, and is therefore interesting to see how each one propagates and damps information, especially for spatially smooth solutions, i.e. in a low-frequency limit.

One can solve the cubic equation with the help of formal computations and perform Taylor expansions for $\xi\to 0$.
However, this is cumbersome with the free parameters $c$, $\sun$, and $\sdeux$.
A more elegant way---up to first order---would boil down to employ the first-order perturbation theory of simple eigenvalues, see \cite[Theorem 1]{greenbaum2020first}.
For second-order result, we could also turn to the procedure by \cite[Chapter II]{kato1995perturbation}.
What we actually employ is injecting a power series expansion in $\xi$ of the eigenvalues into the expansion of the characteristic equation and solve order-by-order, which yields
\begin{align}
    z_{\varphi}(\xi) &= 1 - i c \xi - \tfrac{1}{2}c^2\xi^2 + i c \Bigl ((1-c^2)\bigl ( ( \tfrac{1}{\sun}{-}\tfrac{1}{2})( \tfrac{1}{\sdeux}{-}\tfrac{1}{2}) {-}\tfrac{1}{12}\bigr ) {+} \tfrac{c^2}{6}\Bigr )\xi^3 + \mathcal{O}(\xi^4), \label{eq:taylorPhysicalMode}\\
    z_{\sigma, 1}(\xi) &= (1-\sun)\Bigl (1 + i c \xi + \tfrac{2c^2(\sdeux{-}1) {-} \sun {-} \sdeux {+} 2}{2(\sun {-} \sdeux)}\xi^2 + \mathcal{O}(\xi^3) \Bigr ),\quad \text{and} \notag\\
    z_{\sigma, 2}(\xi) &= (1-\sdeux)\Bigl (1 + \tfrac{(1{-}c^2)({-}\sun {-} \sdeux {+} 2)}{2(\sun {-} \sdeux)}\xi^2+\mathcal{O}(\xi^3) \Bigr ). \notag
\end{align}
The first equation shows that the method is second-order accurate and can become third-order accurate when $ ( \frac{1}{\sun}-\frac{1}{2} ) ( \frac{1}{\sdeux}-\frac{1}{2} ) =\frac{1}{12}$, which is however forbidden by stability reasons.
Moreover, we see that the regular part of the solution carried by the first parasitic mode is geometrically damped if $0<\sun<2$ and propagates at dimensionless velocity $-c$.
The second parasitic mode is damped if $0<\sdeux<2$ and does not propagate.
Notice that taking $\sdeux = 2-\sun$, we obtain $z_{\sigma, 2}\equiv 1-\sdeux = \sun - 1$: this mode can only carries constants.
%%%%%%%%%%%%%%%%%%%%%%%%%%%%%%%%%%%%%%%%%%%%%%%%%%

%%%%%%%%%%%%%%%%%%%%%%%%%%%%%%%%%%%%%%%%%%%%%%%%%%
% SECTION NUMERICAL EXPERIMENTS
%%%%%%%%%%%%%%%%%%%%%%%%%%%%%%%%%%%%%%%%%%%%%%%%%%
\section{Numerical experiments}\label{sec:numericalExperiments}

\subsection{Discrete waves}\label{sec:discreteWaves}

\begin{figure}[htb]
    \includegraphics[width=1\textwidth]{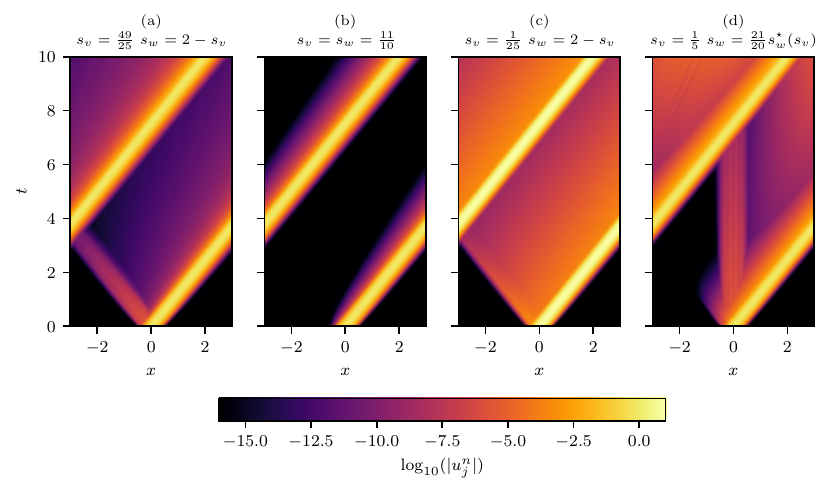}
    \caption{Conserved moment of the scheme as function of space and time for different relaxation parameters.
    In (a), propagation of the mode brought by  $z_{\sigma, 1}$ at speed $-c$ is clearly visible (right-to-left beam), as its damping is weak.
    In (b), the damping of $z_{\sigma, 1}$ and $z_{\sigma, 2}$ is significant, thus only the physical mode $z_{\varphi}$ is observed (left-to-right beam), featuring dispersion.
    In (c), we see that the solution is very inaccurate as $\sun$ is very close to zero.
    Finally, in (d), which is an unstable configuration, we are able to observe the zero-velocity mode brought by $z_{\sigma, 2}$ (vertical beam).}
    \label{fig:exp-waves}
\end{figure}

We now conduct simulations on the domain $[-3, 3]$ using 500 grid-points, endowed with periodic boundary conditions.
We take $\lambda = 1$ and $c=4/5$.
The initial datum is smooth and given by $\mz^{\circ}(x) = \cos(\pi x)^{10}\mathds{1}_{(0, 1/2)}(|x|)$, with non-conserved moments at equilibrium.
We consider different relaxation parameters and the results are in Fig. \ref{fig:exp-waves}.
For the case (a), we see the physical wave propagating at speed $c$ and undergoing dispersion (the milky shadow in the direction opposite to propagation).
A parasitic wave is brought by $z_{\sigma, 1}(\xi)$ propagating at speed $-c$, and experiencing temporal damping, which is slow enough to make it observable using a logarithmic color-scale.
For case (b), we do not observe anything except the physical wave, since the relaxation parameters are pretty close to one.
This entails a quick damping of the parasitic modes and the scheme behaves essentially as a Lax-Wendroff one.
For (c), we see the effect of taking $\sun$ very close to zero.
Eventually, configuration (d) is unstable. Here, the spurious mode brought by $z_{\sigma, 1}(\xi)$ damps away quickly, whereas the one brought by $z_{\sigma, 2}(\xi)$, which does not propagate, is visible for a long time, as $\sdeux$ is quite close to two.

\subsection{Convergence and order}

\begin{figure}[htb]
    \includegraphics[width=.5\textwidth]{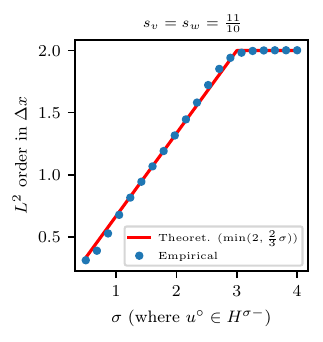}
    \caption{Empirical order of convergence for the $L^2$ error at the final time, as function of the smoothness of the initial datum.}
    \label{fig:convergence}
\end{figure}

Under the same setting as Sec. \ref{sec:discreteWaves} and a final time equal to one, we consider initial data $\mz^{\circ}(x) = \cos(\pi x)^{\sigma-1/2}\mathds{1}_{(0, 1/2)}(|x|)$, so that $\mz^{\circ}\in H^{\sigma -}(\mathbb{R})$ for the Sobolev regularity, and $\mz^{\circ}\in B_{2, \infty}^{\sigma}(\mathbb{R})$ for the Besov regularity. 
We estimate this order by two simulations, one with 2000 grid-points and the other with 4000, yielding the results in Fig. \ref{fig:convergence}.
These are in very good agreement with the theoretical behavior, so that the method achieves full second-order accuracy starting from $\sigma \geq 3$.
Although we present results only for $\sun=\sdeux=\tfrac{11}{10}$, analogous trends of convergence have been obtained for $\sun = \tfrac{49}{25}$ or $\sun = \tfrac{1}{25}$, and $\sdeux = 2-\sun$.

\subsection{Empirical convergence for the Burgers' equation}

\begin{figure}[htb]
    \includegraphics[width=0.66\textwidth]{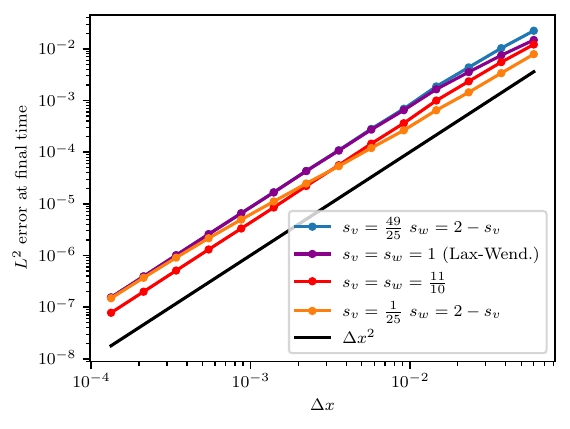}
    \caption{$L^2$ errors at the final time for Burgers' equation.}
    \label{fig:convergence-burgers}
\end{figure}

If we abandon the setting of a linear problem as \eqref{eq:pde}, by taking non-linear equilibria, we can also obtain a formally second-order scheme for the inviscid Burgers' equation
\begin{equation*}
    \partial_t
    \mz
    +
    \partial_{x}
    (\tfrac{1}{2}\mz^2)
    =0.
\end{equation*}
Notice that we do not say anything about its stability, and the scheme can be unstable even if the linearization is stable.
By \cite[Theorem 3.7]{bellotti2023truncation}, the modified equation of the scheme reads 
\begin{equation*}
    \partial_t \mz +\partial_x \muneq(\mz) - \Delta t \Bigl (\frac{1}{2} - \frac{1}{\sun}\Bigr )\partial_x \Bigl ( 2\partial_x\mdeuxeq (\mz)-\frac{\text{d}\muneq(\mz)}{\text{d}\mz} \partial_x\muneq(\mz)\Bigr )
    = \mathcal{O}(\Delta t^2).
\end{equation*}
Formally, we obtain 
\begin{equation*}
    2\partial_x\mdeuxeq (\mz)-\frac{\text{d}\muneq(\mz)}{\text{d}\mz} \partial_x\muneq(\mz) = \Bigl (2\frac{\text{d}\mdeuxeq (\mz)}{\text{d}\mz} - \Bigl ( \frac{\text{d}\muneq(\mz)}{\text{d}\mz}\Bigr )^2\Bigr )\partial_x\mz
\end{equation*}
so that formal second-order consistency with a conservation law with flux $\muneq(\mz)$ can be achieved selecting 
\begin{equation*}
    \mdeuxeq (\mz) = \frac{1}{2}\int^{\mz} \Bigl ( \frac{\text{d}\muneq(\eta)}{\text{d}\eta}\Bigr )^2\text{d}\eta.
\end{equation*}
For the Burgers' equation, this boils down to $\muneq(\mz) = \mz^2/2$ and $\mdeuxeq (\mz) = \mz^3/6$
This scheme is already known in the literature: \cite{coulette2020vectorial} calls it ``Scheme 3'' or ``D1Q3 Lax-Wendroff''.

We now conduct simulations on the domain $[-3, 3]$  endowed with periodic boundary conditions.
We take $\lambda = 1$.
The initial datum is $\mz^{\circ}(x) = \cos(\pi x/4)^{10}\mathds{1}_{(0, 2)}(|x|)$, with non-conserved moments at equilibrium.
The final time is $1/2$, which is slightly before the blowup time.
Results are provided in Fig. \ref{fig:convergence-burgers} and confirm empirical second-order convergence for all considered values of relaxation parameters.
Moreover, we see that the choice $\sun=\sdeux = \tfrac{11}{10}$ yields asymptotically smaller errors compared to other choices, in particular against the Lax-Wendroff scheme, obtained when the relaxation is a projection on the equilibrium.
%%%%%%%%%%%%%%%%%%%%%%%%%%%%%%%%%%%%%%%%%%%%%%%%%%

%%%%%%%%%%%%%%%%%%%%%%%%%%%%%%%%%%%%%%%%%%%%%%%%%%
% SECTION CONCLUSION
%%%%%%%%%%%%%%%%%%%%%%%%%%%%%%%%%%%%%%%%%%%%%%%%%%
\section{Conclusion}
%%%%%%%%%%%%%%%%%%%%%%%%%%%%%%%%%%%%%%%%%%%%%%%%%%

We have conducted a comprehensive stability analysis of a three-velocity lattice Boltzmann scheme for the linear advection equation. By choosing the equilibria to ensure second-order consistency, we have shown that the two relaxation parameters do not affect the order of accuracy of the scheme, but determine its stability properties. The Lax-Wendroff method is recovered as a particular case, corresponding to unit relaxation parameters, thereby establishing a direct connection between the lattice Boltzmann formulation and a classical finite difference scheme.

Our main contribution is a complete characterization, in terms of the Courant number and the two relaxation parameters, of the conditions under which the amplification matrix is uniformly power-bounded with respect to the Fourier frequency.
Therefore, for this stability investigation, we decide not to restrain the relaxation parameters domain to those, for example, ensuring second order accuracy.
We derive explicit necessary and sufficient conditions by combining von Neumann analysis with a detailed investigation of the eigenvalue structure of the amplification matrix. In particular, our analysis goes beyond the location of the eigenvalues in the closed unit disk by accounting for their multiplicities and the associated constraints required for power-boundedness. The use of the Cohn-Schur algorithm makes it possible to obtain an explicit characterization of the admissible parameter set, providing a rigorous and systematic account of the stability properties of the scheme.

By levereging on existent results for finite difference schemes, we have further established convergence of the conserved moment towards the solution of the advection equation for smooth initial data, under equilibrium initialization of the non-conserved moments. The analysis also provides insight into the propagation of discrete waves and the relationship between the lattice Boltzmann dynamics and the underlying macroscopic equation. Numerical experiments support the theoretical findings, illustrate the expected propagation patterns, and confirm second-order convergence for smooth initial data. Additional experiments involving non-smooth data and an adaptation to the inviscid Burgers equation suggest the potential relevance of the proposed framework beyond the linear setting considered in the analysis.

Several directions for future research naturally follow from this work. A first objective is to extend the present analysis to lattice Boltzmann schemes with more velocities and additional relaxation parameters. Further extensions to higher spatial dimensions and more complex target equations would provide a broader framework for investigating the stability properties of lattice Boltzmann methods and their applicability to a wider range of transport phenomena.

\appendix
%%%%%%%%%%%%%%%%%%%%%%%%%%%%%%%%%%%%%%%%%%%%%%%%%%
% APPENDIX: expression of the polynomials
%%%%%%%%%%%%%%%%%%%%%%%%%%%%%%%%%%%%%%%%%%%%%%%%%%
%%%%%%%%%%%%%%%%%%%%%%%%%%%%%%%%%%%%%%%%%%%%%%%%%%
% Annexe: expression of the polynomials
%%%%%%%%%%%%%%%%%%%%%%%%%%%%%%%%%%%%%%%%%%%%%%%%%%
\section{Expression of the polynomials}
\label{sec:exprpoly}

The characteristic polynomial of the amplification matrix $\matG$ reads
\begin{equation}\label{eq:ann_defP3}
\polyPt(X;\xi,c,\tun,\tdeux) = \alphatz + \alphatu  X + \alphatd  X^2 + \alphatt  X^3 ,
\end{equation}
with
\begin{align*}
    \alphatz &= -\tun \tdeux,\\
    \Re(\alphatu ) &=
    \tun \tdeux -(1-\cos\xi)\tun (1+\tdeux) c^2 - \cos\xi(\tun+\tdeux),
    \\
    \Im(\alphatu ) &= c \tdeux (1+\tun) \sin\xi,\\
    \Re(\alphatd ) &= 
    -1+c^2(1+\tdeux)(1 - \cos\xi) + \cos\xi(\tun+\tdeux),
    \\
    \Im(\alphatd ) &= c(1+\tun) \sin\xi,\\
    \alphatt &= 1.
\end{align*}

Applying the Cohn-Schur algorithm on $\polyPt$ yields:
\begin{equation}\label{eq:ann_defP2cs}
    \cohnschur(\polyPt) = \polyPdcs(X;\xi,c,\tun,\tdeux) = \alphadcsz + \alphadcsu X + \alphadcsd X^2,
\end{equation}
with
\begin{align*}
    \Re(\alphadcsz) &=
    -c^2 \tun(1-\tdeux^2)(1-\cos(\xi)) - \cos(\xi)(\tun+\tdeux)(1-\tun\tdeux),\\
    \Im(\alphadcsz) &= c\tdeux(1-\tun^2)\sin(\xi),\\
    \Re(\alphadcsu) &=
    - (1-\tun^2\tdeux^2) + c^2(1+\tdeux)(1-\cos\xi)(1-\tun^2\tdeux)  \\
    & \qquad\qquad\qquad+ \bigl(\tun(1-\tdeux^2) + \tdeux(1-\tun^2)\bigr)\cos\xi,\\
    \Im(\alphadcsu) &= c(1-\tun\tdeux^2)(1+\tun)\sin(\xi),\\
    \alphadcsd &= 1-\tun^2\tdeux^2.
\end{align*}

The first derivative of $\polyPt$ reads:
\begin{equation}\label{eq:ann_defP2de}
    \mathrm{d}\polyPt/\mathrm{d}X=\polyPdde(X;\xi,c,\tun,\tdeux) = \alphaddez + \alphaddeu X + \alphadded X^2 ,
\end{equation}
with
\begin{align*}
    \Re(\alphaddez ) &=
    \tun \tdeux -(1-\cos\xi)\tun (1+\tdeux) c^2 - \cos\xi(\tun+\tdeux),
    \\
    \Im(\alphaddez ) &= c \tdeux (1+\tun) \sin\xi,\\
    \Re(\alphaddeu ) &= 
    -2+2c^2(1+\tdeux)(1 - \cos\xi) + 2\cos\xi(\tun+\tdeux),
    \\
    \Im(\alphaddeu ) &= 2c(1+\tun) \sin\xi,\\
    \alphadded &= 3.
\end{align*}

Applying the Cohn-Schur algorithm on $\polyPdcs$ yields:
\begin{equation}\label{eq:ann_defP1cscs}
    \cohnschur(\polyPdcs) = \polyPucscs(X;\xi,c,\tun,\tdeux) = \alphaucscsz + \alphaucscsu X,
\end{equation}
with
% \begin{align*}
%     \Re(\alphaucscsz) &=
%     - (1-\tun\tdeux)^2 \bigl(
%         (1+\tun\tdeux)^2-\cos^2(\xi)(\tun+\tdeux)^2
%     \bigr)
%     + c^2(1-\cos\xi)\big(\\
%     &\qquad\qquad
%         (1-\tun)\bigl(
%             (1-\tun\tdeux)(1+\tun^2\tdeux^3)
%             + \tun\tdeux(\tdeux-\tun)(1+\tdeux)
%         \bigr)\\
%     &\qquad\qquad
%         + \cos\xi(1+\tun)\bigl(
%             (\tun+\tdeux^2)(1-\tun\tdeux)^2 + \tun\tdeux(1-\tdeux)^2(1+\tun)
%         \bigr)\\
%     &\qquad
%     \big) + c^4(1-\cos\xi)^2(1-\tdeux)\tun(1+\tdeux)^2(1-\tun^2\tdeux)
%     ,\\
%     \Im(\alphaucscsz) &= 
%     c\sin\xi (1+\tun)(1+\tdeux) \big( \\
%     &\qquad\qquad
%         (1-\tun\tdeux)^2(1+\tun\tdeux-\cos\xi(\tun+\tdeux)) \\
%     &\qquad\qquad%\qquad\qquad 
%         - c^2(1-\cos\xi)(
%             \tun(1-\tdeux)(1-\tun\tdeux^2)
%             + \tdeux(1-\tun)(1-\tun^2\tdeux)
%         )\\
%     &\qquad
%         \big),\\
%     \alphaucscsu &= (1-\tun\tdeux)^2\bigl(
%         (1+\tun\tdeux)^2-\cos^2(\xi)(\tun+\tdeux)^2
%     \bigr) \\
%     &\qquad
%     - c^2(1-\cos^2(\xi))\tdeux^2(1-\tun^2)^2
%     - c^4(1-\cos\xi)^2\tun^2(1-\tdeux^2)^2 \\
%     &\qquad
%     - 2 c^2\cos\xi(1-\cos\xi)\tun(1-\tdeux^2)(1-\tun\tdeux)(\tun+\tdeux).
% \end{align*}
{\small
\begin{align*}
    \Re(\alphaucscsz) &=
    - (1-\tun\tdeux)^2 \bigl(
        (1+\tun\tdeux)^2-\cos^2(\xi)(\tun+\tdeux)^2
    \bigr) \\
    %\\
    &\qquad+ c^2(1-\cos\xi)%%\qquad
        \Big[(1-\tun)\bigl(
            (1-\tun\tdeux)(1+\tun^2\tdeux^3)
            + \tun\tdeux(\tdeux-\tun)(1+\tdeux)
        \bigr)\\
    &\qquad\qquad\qquad\qquad
        + \cos\xi(1+\tun)\bigl(
            (\tun+\tdeux^2)(1-\tun\tdeux)^2 + \tun\tdeux(1-\tdeux)^2(1+\tun)
        \bigr)\Big]\\
    &\qquad
    + c^4(1-\cos\xi)^2(1-\tdeux)\tun(1+\tdeux)^2(1-\tun^2\tdeux)
    ,\\
    \Im(\alphaucscsz) &= 
    c\sin\xi (1+\tun)(1+\tdeux) \Big[
        (1-\tun\tdeux)^2(1+\tun\tdeux-\cos\xi(\tun+\tdeux)) \\
    &\qquad\qquad\qquad\qquad 
        - c^2(1-\cos\xi)(
            \tun(1-\tdeux)(1-\tun\tdeux^2)
            + \tdeux(1-\tun)(1-\tun^2\tdeux)
        ) \Big],\\
    \alphaucscsu &= (1-\tun\tdeux)^2\bigl(
        (1+\tun\tdeux)^2-\cos^2(\xi)(\tun+\tdeux)^2
    \bigr) \\
    &\qquad
    - c^2(1-\cos^2(\xi))\tdeux^2(1-\tun^2)^2
    - c^4(1-\cos\xi)^2\tun^2(1-\tdeux^2)^2 \\
    &\qquad
    - 2 c^2\cos\xi(1-\cos\xi)\tun(1-\tdeux^2)(1-\tun\tdeux)(\tun+\tdeux).
\end{align*}}

The first derivative of $\polyPdcs$ reads:
\begin{equation}\label{eq:ann_defP1csde}
    \mathrm{d}\polyPdcs/\mathrm{d}X = \polyPucsde(X;\xi,c,\tun,\tdeux) = \alphaucsdez + \alphaucsdeu X,
\end{equation}
with
\begin{align*}
    \Re(\alphaucsdez) &=
    - (1-\tun^2\tdeux^2) + c^2(1+\tdeux)(1-\cos\xi)(1-\tun^2\tdeux)  \\
    & \qquad\qquad\qquad+ \bigl(\tun(1-\tdeux^2) + \tdeux(1-\tun^2)\bigr)\cos\xi,\\
    \Im(\alphaucsdez) &= c(1-\tun\tdeux^2)(1+\tun)\sin(\xi),\\
    \alphaucsdeu &= 2(1-\tun^2\tdeux^2).
\end{align*}

Applying the Cohn-Schur algorithm on $\polyPdde$ yields:
\begin{equation}\label{eq:ann_defP1decs}
    \cohnschur(\polyPdde) = \polyPudecs(X;\xi,c,\tun,\tdeux) = \alphaudecsz + \alphaudecsu X,
\end{equation}
with
\begin{align*}
    \Re(\alphaudecsz) &=
    -2(1-\cos\xi(\tun+\tdeux))(3-\tun\tdeux + \cos\xi(\tun+\tdeux)) + 2(1-\cos\xi)c^2 \big(\\
    &\qquad\qquad
        3(1+\tdeux) + \cos\xi(\tun+\tdeux^2)(1+\tun) 
        - \tun(1+\tdeux)^2 - \tdeux(1+\tun)^2 \\
    &\qquad
    \big) + 2c^4\tun(1-\cos\xi)^2(1+\tdeux)^2,\\
    \Im(\alphaudecsz) &=
    2c\sin\xi(1+\tun)\bigl(
        3+\tdeux(1+\tun)-
        % (\cos\xi+c^2(1-\cos\xi))
        (c^2+\cos\xi(1-c^2))
        (\tun+\tdeux)(1+\tdeux)
    \bigr),\\
    \alphaudecsu &= 
    9-(\cos\xi(\tun+\tdeux)-\tun\tdeux)^2 
    + (1-\cos\xi)c^2 \big(\\
    &\qquad\qquad
        2\tun(1+\tdeux)(\tdeux\tun-\cos\xi(\tun+\tdeux)) 
        - (1+\cos\xi)\tdeux^2(1+\tun)^2\\
    &\qquad
    ) - c^4(1-\cos\xi)^2(1+\tdeux)^2\tun^2 .
\end{align*}

The first derivative of $\polyPdde$ reads:
\begin{equation}\label{eq:ann_defP1dede}
    \mathrm{d}\polyPdde/\mathrm{d}X = \polyPudede(X;\xi,c,\tun,\tdeux) = \alphaudedez + \alphaudedeu X,
\end{equation}
with
\begin{align*}
    \Re(\alphaudedez ) &= 
    -2+2c^2(1+\tdeux)(1 - \cos\xi) + 2\cos\xi(\tun+\tdeux),
    \\
    \Im(\alphaudedez ) &= 2c(1+\tun) \sin\xi,\\
    \alphaudedeu &= 6.
\end{align*}

%%%%%%%%%%%%%%%%%%%%%%%%%%%%%%%%%%%%%%%%%%%%%%%%%%

%%%%%%%%%%%%%%%%%%%%%%%%%%%%%%%%%%%%%%%%%%%%%%%%%%
% Bibliography
%%%%%%%%%%%%%%%%%%%%%%%%%%%%%%%%%%%%%%%%%%%%%%%%%%
\bibliographystyle{plain}
\bibliography{BCDG}
%%%%%%%%%%%%%%%%%%%%%%%%%%%%%%%%%%%%%%%%%%%%%%%%%%

\end{document}